\newlength{\myfboxsep}
\newlength{\mywidth}
\newcommand{\boxv}{\setlength{\myfboxsep}{\fboxsep}
\setlength{\fboxsep}{0pt}
\,\framebox[\mywidth]{$\vee$}\,
\setlength{\fboxsep}{\myfboxsep}}
\newtheorem{theorem}{Theorem}[section]
\newtheorem{lemma}{Lemma}[section]
\newtheorem{proposition}{Proposition}[section]
\newtheorem{remark}{Remark}[section]
\theoremstyle{definition}
\newtheorem{definition}{Definition}[section]
\newtheorem{question}{Question}[section]
\theoremstyle{remark}
\numberwithin{equation}{section}
\def\maxbool{\mathrel{%
    \mathchoice{\QEQ}{\QEQ}{\scriptsize\QEQ}{\tiny\QEQ}%
}}
\def\QEQ{{%
    \setbox0\hbox{$\cup$}%
    \rlap{\hbox to \wd0{\hss $\text{ }\vee$ \hss}}\box0
}}
\def\minbool{\mathrel{%
    \mathchoice{\REQ}{\REQ}{\scriptsize\REQ}{\tiny\REQ}%
}}
\def\REQ{{%
\setbox0\hbox{$\cup$}%
    \rlap{\hbox to \wd0{\hss $\text{ }\wedge$ \hss}}\box0
}}
\DeclareMathOperator{\lito}{o}
\DeclareMathOperator{\bigo}{O}
\newcommand{\mboli}{\!\begin{array}{c} {\scriptstyle\times} \\[-12pt]\cup\end{array}\!}
\def\blfootnote{\gdef\@thefnmark{}\@footnotetext}
\begin{document}
\title{Boolean convolutions and regular variation}
\author{Sukrit Chakraborty and Rajat Subhra Hazra}
\address{ Indian Statistical Institute\\ 203, B.T. Road, Kolkata- 700108, India }
\email{sukrit049@gmail.com, rajatmaths@gmail.com}
\blfootnote{\textup{2010} \textit{Mathematics Subject Classification}: 60G70; 46L53; 46L54.}
\keywords{Boolean convolution, regular variation, free convolution, subexponential}

\newcommand{\acr}{\newline\indent}
\begin{abstract}
In this article we study the influence of regularly varying probability measures on additive and multiplicative Boolean convolutions.  We introduce the notion of Boolean subexponentiality (for additive Boolean convolution), which extends the notion of classical and free subexponentiality. We show that the distributions with regularly varying tails belong to the class of Boolean subexponential distributions. As an application we also study the behaviour of the Belinschi-Nica map. Breiman's theorem studies the classical product convolution between regularly varying measures. We derive an analogous result to Breiman's theorem in case of multiplicative Boolean convolution.  In proving these results we exploit the relationship of regular variation with different transforms and their Taylor series expansion.
\end{abstract}
\maketitle

\section{Introduction}
In the set up of quantum probability the notion of stochastic independence plays a very crucial role. M. Schurmann (\cite{schurmann}) conjectured that there are only three notions of independence arising out of algebraic probability spaces. This conjecture was established by \cite{speicher:1997} who showed that tensor (classical), free and Boolean independence are the only ones. See for further details~\cite{booliremark1, muraki}. The main aim of this article is to study the Boolean convolution and its properties when the measures belong to the class of heavy tailed random variables.

The additive Boolean convolution of two probability measures $\mu$ and $\nu$ on the real line (denoted by $\mu \uplus \nu$) was introduced in ~\cite{rezspeibooli} and the multiplicative Boolean convolution of two probability measures $\mu$ and $\nu$ (denoted by $\mu \mboli \nu$) was introduced in ~\cite{bercovicibooli} where $\mu$ and $\nu$ are both defined on the non-negative part of the real line. Later  Franz introduced the concept of Boolean independence and  defined Boolean convolutions using operator theory in \cite{franzboolean}, which is similar to the approach of Bercovici and Voiculescu for the free convolutions in ~\cite{bercovici1993free}. The definitions of Boolean convolutions using Boolean independence also agree with the former definitions. 
 
 In this article we are interested in a certain class of measures having power law tail behaviour. A measure is called regularly varying of index $-\alpha$, for some $\alpha>0$, if $\mu(x,\infty)\sim x^{-\alpha} L(x)$ for some slowly varying function $L(x)$ (see explicit definition in next section). Such measures form  a large class containing important distributions like Pareto and Fr\'echet and classical stable laws. The class of distribution functions with regularly varying tail index $-\alpha$, $\alpha  \geqslant 0$ delivers significant applications in finance, insurance, weather, Internet traffic modelling and many other fields. In this paper, we want to realise what happens with the Boolean convolutions of the probability measures which have regularly varying tails. In particular we want to address the following question:
\begin{question}
Suppose $\mu$ and $\nu$ are two probability measures supported on $[0,\infty)$ with regularly varying tails of indices $-\alpha$ and $-\beta$ respectively ($\alpha$ and $\beta$ non-negative). Then what can be said about the tail behaviour of $\mu \uplus \nu$ and $\mu \mboli \nu$?
\end{question}

When one considers the case of classical additive convolution, the answer is well known and the principle of one large jump gives that the heavier tail dominates. In fact it is well known, if $\mu$ is regularly varying of index $-\alpha$, then $\mu$ is (classical) subexponential in the sense that $\mu^{\ast n}(x,\infty)\sim n \mu(x,\infty)$ as $x\to \infty$ for all $n\geqslant  1$. For a contemporary review on subexponential distributions and their applications we refer to~\cite{foss:korshu:stan, goldie1998, jessenmikosch}. The case of free additive convolution was studied by~\cite{hazra1} and it is related to the free extreme value theory of~\cite{extremeben}. One of the main aim of this article is to  extend this result to the Boolean additive convolution.

The case of multiplicative convolution turns out to be more interesting and challenging. In classical independence, the role of Breiman's theorem is very crucial in this result. In an influential work of Breiman (\cite{breiman:1965}) he showed the following: If $\mu$ and $\nu$ are positively supported measures, $\mu$ is regularly varying of index $-\alpha$ and $\nu$ is such that $\int_0^\infty y^{\alpha+\epsilon} \nu(dy)<\infty$ (for some $\epsilon>0$) then 
\begin{equation}\label{eq:breiman}
\mu\circledast \nu(x,\infty)\sim \int_0^\infty y^\alpha \nu(dy) \mu(x,\infty) \, \, x\to\infty,
\end{equation}
where $\mu\circledast\nu$ denotes the classical multiplicative convolution. A similar result can be obtained when $\nu$ is a regularly varying measure(see~\cite{jessenmikosch}). The result in the case of free multiplicative is still unknown to the best of our knowledge. We provide an example later to show the behaviour is much different from the classical case. In the Boolean convolution, the behaviour turns out to be much similar for multiplicative convolution and in that case again the heavier tail wins. We derive the explicit description in Theorem~\ref{mth1}. The constants appearing though change from the classical case.

 The study of Boolean independence and convolutions although not studied as rigorously as free independence has already established its importance in many areas. The Boolean convolutions of probability measures are also used in studying quantum stochastic calculus, see ~\cite{booleno2}. The Boolean Brownian motion and Poisson processes are investigated using Boolean convolutions to study the Fock space in ~\cite{booleno6}, ~\cite{booleno3}. We can also observe the connection between Appell polynomials and Boolean theory in ~\cite{booleno4}.The Boolean stable laws and their relationship with free and classical stable laws were extensively studied in recent works (see \cite{Booleanariz, Boostableariz2014, Booleanamw, arizmendiTrans}). In a more recent study of classification of easy quantum groups, it was shown the non-commutative analogue of de Finetti's theorem (quantum exchangeability) holds true and the notions of free independence, classical independence and half independence arise in this context (see~\cite{banica:curran:speicher}). The relation between Boolean independence and de Finetti's theorem was recently studied by~\cite{liu:2015}. Recently it has been established that in some random matrices, the asymptotic Boolean independence can arise (\cite{ lenczewski:2011,male2011, popa2017}).

As an application for the above results we determine the behaviour of the Belinschi-Nica map which is a one parameter family of maps $\{\textbf B_t\}_{t\geqslant  0}$ on set of probability measures and was introduced by~\cite{belinicabool} (see precise definition in next section). It is well known that classical infinitely divisible distributions are in bijection with the free infinitely divisible distributions. Here the map $\textbf B_1$ turns out to be a bijection from Boolean to free infinitely divisible distributions. In fact it turns out that $(\textbf B_t(\mu))_{t\geqslant  1}$ is $\boxplus$-infinitely divisible for every probability measure $\mu$. The relationship with free Brownian motion and complex Burgers equation makes it an extremely important object of study. The map was further studied in \cite{Booleanariz}, \cite{Boostableariz2014}. In this article we study the case when $\mu$ is a heavy tail distribution and show that $\mu$ is regularly varying of index $-\alpha$ if and only if $\textbf B_t(\mu)$ is regularly varying $-\alpha$ for $t\geqslant  0$. In particular, it shows that the support of $\textbf B_t(\mu)$ will be unbounded whenever $\mu$ has such regularly varying tails. The Boolean extreme value theory was recently explored in~\cite{vargas2017boolean} in parallel to the study of free extreme value theory (\cite{extremeben}). We show that in the subexponential case, the tail behaviour of Boolean, free and classical extremes are asymptotically equivalent. It is known that the classical subexponential random variables satisfy \textit{the  principle  of  one  large  jump}, that is,  if $\{X_i\}$ are i.i.d. subexponential random variables, then for all $n\ge 1$, 
$$P( \sum_{i=1}^n X_i > x)\sim n P(X_1>x)\sim P(\max_{1\le i\le n} X_i>x) \text{ as } x\to \infty.$$
The free max convolution, denoted by $\boxv$, was introduced in \cite{extremeben} and the analogous result for the free one large jump principle was obtained in \cite{hazra1}. In this article we show that Boolean subexponential distributions follow the principle of one large jump also. 

The main techniques involved in the proof of the above results is to study the transforms and their Taylor series expansion. In particular we show the remainder terms of the respective transforms carries information about the regular variation and also it is preserved under certain operations such as taking a reciprocal. These results can be independent in their own interest and can be used to study various properties of the transforms involved in free and Boolean independence. Such ideas were first explored in the works of \cite{ber:pata:biane} to show the bijection between free infinitely distributions with the classical counter parts. Other works relating the remainder terms of Cauchy and $R$-transforms were studied in \cite{bercovici2000functions, taylorseries, hazra1}.

{\bf Outline of the article:} In the next section we develop the set-up and state our main results precisely. To prove the results we use various transforms and their relationship with the tail of the measures.  In Section \ref{p1section2} we introduce some of the transforms used, an interesting property  of the Belinschi-Nica map $\{\textbf{B}_t\}_{t \geqslant 0}$ in Theorem \ref{theorem:application} and the principle of one large jump in Proposition \ref{prop:oljboolean}. In Section \ref{sectionthree} we state the relationship between the tail of a regularly varying probability measure the remainder of $1/B$-transform followed by defining the remainder terms.  In Section~\ref{sectionfour} we use these relations to provide proof of the results for the additive Boolean convolutions. Section \ref{p1section5} contains the proof of Theorem \ref{mth1} about the multiplicative Boolean convolution. Finally in section \ref{secsce} we prove the technical results which are presented in Section \ref{sectionthree}.


\section{Preliminaries and main results}\label{p1section2}
A real valued measurable function $f$ defined on non-negative real line is called \textit{regularly varying} (at infinity) with index $\alpha$ if for every $t>0$, $\frac{f\left(tx\right)}{f\left(x\right)} \rightarrow t^{\alpha}$ as $x \rightarrow \infty$. If $\alpha = 0$, then $f$ is said to be a slowly varying function (at infinity). Regular variation with index $\alpha$ at zero is defined analogously. In fact, $f$ is regularly varying at zero of index $\alpha$, if the function $x \mapsto f(\frac{1}{x})$ is regularly varying at infinity of index $-\alpha$. {\it Unless otherwise mentioned, the regular variation of a function will be considered at infinity. For regular variation at zero, we shall explicitly mention so}. A distribution function $F$  on $[0,\infty)$ has regularly varying tail of index $-\alpha$ if $\overline{F}(x) = 1 - F(x)$ is regularly varying of index $-\alpha$. Since $\overline{F}(x) \rightarrow 0$ as $x \rightarrow \infty$ , we must necessarily have $\alpha \geqslant 0$. A probability measure on $[0,\infty)$  with regularly varying tail is defined through its distribution function. Equivalently, a measure $\mu$ is said to have a regularly varying tail of index $-\alpha$, if $\mu(x,\infty)$ is regularly varying of index $-\alpha$ as a function of $x$.

The real line and the non-negative part of the real line will be denoted by $\mathbb{R} $ and $ \mathbb{R}_{+}$ respectively. The complex plane will be denoted by $\mathbb C$ and for a complex number $z$, $\Re z$ and $\Im z$ will denote its real and imaginary parts respectively. Given positive numbers $\kappa$ and $\delta$, let us define the following cone:
$$\Delta_{\kappa}=\{z\in \mathbb{C}^-: |\Re z|<-\kappa \Im z\} \text{  and  } \Delta_{\kappa,\delta}=\{z\in\Delta_{\kappa}: |z|<\delta\},$$
where $\mathbb C^+$ and $\mathbb C^-$ are the upper and the lower halves of the complex plane respectively, namely, $\mathbb C^+ = \{z\in\mathbb C: \Im z>0\}$ and $\mathbb C^- = - \mathbb C^+$. Then we shall say that $f\left(z\right) \to l$ as $z$ goes to $0$ n.t. (non-tangentially), if for any $\epsilon>0$ and $\kappa>0$, there exists $\delta\equiv \delta\left(\kappa,\epsilon\right)>0$, such that $|f\left(z\right) - l|< \epsilon$, whenever $z \in \Delta_{\kappa, \delta}$.

We shall write $f\left(z\right)\approx g\left(z\right)$, $f\left(z\right)=\lito\left(g\left(z\right)\right)$ and $f\left(z\right)=\bigo\left(g\left(z\right)\right)$ as $z\to 0$ n.t.\ to mean that $f\left(z\right)/g\left(z\right)$ converges to a non-zero limit, ${f\left(z\right)}/{g\left(z\right)}\rightarrow 0$ and $f\left(z\right)/g\left(z\right)$ stays bounded as $z\to 0$ n.t.\ respectively. If the non-zero limit is $1$ in the first case, we write $f\left(z\right) \sim g\left(z\right)$ as $z\to 0$ n.t. For $f\left(z\right)=\lito\left(g\left(z\right)\right)$ as $z\to 0$ n.t., we shall also use the notations $f\left(z\right)\ll g\left(z\right)$ and $g\left(z\right)\gg f\left(z\right)$ as $z\to 0$ n.t.

$\mathcal{M} $ and $\mathcal{M}_{+} $ are the set of probability measures supported on $\mathbb{R}$ and $\mathbb{R}_{+}$  respectively. By $\mathcal{M}_{p}$ we mean the set of probability measures on $[0, \infty)$ whose $p$-th moment is finite and do not have the $(p+1)$-th moment.

\subsection{Additive Boolean convolution}
For a probability measure $\mu\in \mathcal{M}$, its Cauchy transform is defined as
$$G_{\mu}\left(z\right)=\int_{-\infty}^{\infty}\frac{1}{z-t}d\mu\left(t\right),\ \ \  z\in \mathbb{C}^+.$$
Note that $G_{\mu}$ maps $\mathbb{C}^+$ to $\mathbb{C}^{-}$. The Boolean additive convolution is determined by the transform $K_\mu$ which is defined as
\begin{align} \label{kgrelationhere}
K_{\mu}\left(z\right) &= z - \frac{1}{G_{\mu}\left(z\right)}, \text{  for  } z \in \mathbb{C}^{+}.
\end{align}
For two probability measures $\mu$ and $\nu$, the additive Boolean convolution $\mu \uplus \nu$ is determined by 
\begin{align} \label{Kmu}
K_{\mu \uplus \nu}\left(z\right) = K_{\mu}\left(z\right) + K_{\nu}\left(z\right), \text{  for  } z \in \mathbb{C}^{+}
\end{align}
and $\mu \uplus \nu$ is again a probability measure.

Our first result describes the behaviour of additive Boolean convolution under the regularly varying measures. 
Suppose $\{X_{i}\}_{i \geqslant 1}$ be independent (classically) and identically distributed non-negative regularly varying random variables of index $-\alpha$, $\alpha\geqslant  0$ and denote $S_{n}=X_{1}+X_{2}+ \cdots +X_{n}$. Then it is known that 
\begin{equation}\label{eq:subexp:classical}
P(S_{n} > x) \sim nP(X_{1} > x)\,\, \text{ as } x\to \infty.
\end{equation}
The proof of the above fact can be found in \cite{fellertwobook}. If a sequence of random variables follows~\eqref{eq:subexp:classical} then they are called subexponential. In the case of free additive convolution, the parallel result was shown in \cite{hazra1}, which states:
\begin{align*}
\mu^{\boxplus{n}}\left(y,\infty\right) = \underbrace{\left(\mu \boxplus \cdots \boxplus \mu\right)}_{n\hspace{.15cm}times}\left(y,\infty\right) \sim  n\mu\left(y,\infty\right) \text{  as  } y \rightarrow \infty,
\end{align*}
when $\mu$ has regularly varying tail of index $-\alpha$, $\alpha\geqslant  0$. We show that result can be extended to Boolean additive convolution also. To state the result we first introduce the definition of Boolean subexponentiality:

\begin{definition}
A probability measure $\mu $ on $[0,\infty)$, with $\mu\left(y,\infty\right) > 0 $ for all $y \geqslant 0$, is said to be {\it Boolean-subexponential} if for all $n \in \mathbb{N}$,
\begin{align*}
\mu^{\uplus{n}}\left(y,\infty\right) = \underbrace{\left(\mu \uplus \cdots \uplus \mu\right)}_{n\hspace{.15cm}times}\left(y,\infty\right) \sim  n\mu\left(y,\infty\right) \text{  as  } y \rightarrow \infty.
\end{align*}
\end{definition}
Our first result shows that analogue of the classical and free case is also valid in Boolean set-up. 
\begin{theorem} \label{lastth}
If $\mu $ is regularly varying of index $-\alpha $ with $\alpha\geqslant  0$, then $\mu $ is Boolean-subexponential.
\end{theorem}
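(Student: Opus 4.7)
The plan is to exploit the linearity $K_{\mu^{\uplus n}} = n\, K_\mu$ supplied by~\eqref{Kmu}, combined with the tail-versus-remainder dictionary announced for Section~\ref{sectionthree}. First, the forward direction of that dictionary should translate the hypothesis that $\mu$ is regularly varying of index $-\alpha$ into an asymptotic expansion of $K_\mu$ (equivalently, of $1/G_\mu$) as $z\to 0$ non-tangentially in the cone $\Delta_{\kappa,\delta}$: a polynomial piece whose coefficients are the first $\lfloor\alpha\rfloor$ existing moments of $\mu$, plus a remainder whose magnitude is comparable to $\mu(1/|z|,\infty)$ up to an appropriate power of $z$.

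Second, because $K_{\mu^{\uplus n}}(z) = n\, K_\mu(z)$, every term — polynomial part and remainder alike — in the expansion of $K_{\mu^{\uplus n}}$ is exactly $n$ times the corresponding term for $K_\mu$. Third, I would recover $G_{\mu^{\uplus n}}$ via
\begin{align*}
G_{\mu^{\uplus n}}(z) \;=\; \frac{1}{z - K_{\mu^{\uplus n}}(z)} \;=\; \frac{G_\mu(z)}{n - (n-1)\, z\, G_\mu(z)},
\end{align*}
and invoke the reciprocation results of Section~\ref{sectionthree}, which are precisely the statements that a regularly varying remainder is preserved (up to the expected leading constant) when one passes to the reciprocal of a transform of the appropriate form. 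This should give that the remainder of $G_{\mu^{\uplus n}}$ at $0$ n.t.\ is asymptotic to $n$ times the remainder of $G_\mu$. Finally, applying the Tauberian half of the same dictionary in reverse converts this into the tail asymptotic $\mu^{\uplus n}(y,\infty) \sim n\, \mu(y,\infty)$ as $y\to\infty$, which is Boolean subexponentiality.

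The main obstacle is the third step. Although $K$ is exactly additive under $\uplus$, recovering $G_{\mu^{\uplus n}}$ requires inverting $z - n K_\mu(z)$, which is a nonlinear operation on the expansion. The delicate point is to verify that the cross-terms arising from the product of the polynomial (moment) part with the regularly varying remainder are genuinely of lower order than $n$ times the remainder of $G_\mu$; this is exactly what the invariance-under-reciprocal lemmas of Section~\ref{sectionthree} — advertised as a main technical contribution of the paper — are meant to deliver. Once they are in hand, the chain \emph{tail of $\mu$} $\Rightarrow$ \emph{expansion of $K_\mu$} $\Rightarrow$ \emph{expansion of $K_{\mu^{\uplus n}}$} $\Rightarrow$ \emph{expansion of $G_{\mu^{\uplus n}}$} $\Rightarrow$ \emph{tail of $\mu^{\uplus n}$} closes and the theorem follows.
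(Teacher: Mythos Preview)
Your overall strategy --- exploit the linearity $K_{\mu^{\uplus n}} = nK_\mu$ and run a Tauberian dictionary in both directions --- is exactly the paper's. However, your third step (recovering $G_{\mu^{\uplus n}}$ from $K_{\mu^{\uplus n}}$ and then reading off the tail from $G$) is an unnecessary detour, and the ``main obstacle'' you flag dissolves once you see how the paper organizes the dictionary.

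The point is that Theorems~\ref{ccor1.1}--\ref{ccor1.5} are stated as a \emph{bidirectional} equivalence between the tail $\mu(y,\infty)$ and the remainder $r_{1/B_\mu}(-iy^{-1})$; by the identity $K_\mu(z^{-1}) = 1/B_\mu(z)$ of~\eqref{addiKBmu} this is already a dictionary at the $K$-level. The reciprocation work (Proposition~\ref{etatrans}, passing from $\Psi_\mu$ to $\eta_\mu = \Psi_\mu/(1+\Psi_\mu)$) is done once and for all inside the \emph{proof} of those theorems, not each time you apply them. So the paper's chain is simply
\[
\text{tail of }\mu \;\Longleftrightarrow\; r_{K_\mu}
\;\xrightarrow{\ \text{linearity}\ }\;
r_{K_{\mu^{\uplus n}}} = n\, r_{K_\mu}
\;\Longleftrightarrow\;
\text{tail of }\mu^{\uplus n},
\]
with the two outer arrows being the forward and reverse implications of Theorems~\ref{ccor1.1}--\ref{ccor1.5} (applied to $\mu$ and to $\mu^{\uplus n}$ respectively). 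There is no need to reconstruct $G_{\mu^{\uplus n}}$ or to control cross-terms in $1/(z - nK_\mu)$; that inversion has already been absorbed into the dictionary. Structurally the paper packages this as Lemma~\ref{llema} (two measures with tail-balanced tails) followed by a one-line induction, but the content is what you wrote minus the $G$-step.
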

The proof uses the relation between $\mu$ and $G_\mu$ developed in~\cite{hazra1} and also extensions to the  transforms $K_\mu$. 
\subsection{Applications of Boolean subexponentiality}
In this subsection we see two important applications of Boolean subexponentiality. As mentioned in the introduction that there are three universal notions of independence and these three notions give rise to corresponding extreme value theory. We first show that subexponentiality in all the three notions are asymptotically equivalent. In the second application we show how the Belinschi-Nica map related to free infinitely divisible indicator behaves for a regularly varying measure. 
\subsubsection{Applications to Boolean extremes} The very immediate upshot of the definition of Boolean subexponentiality is the {\textit principle of one large jump} which gives us the asymptotic relation between the sum and maximum of a finite collection of i.i.d. probability distributions. The extreme value theory in Boolean independence was recently explored in \cite{vargas2017boolean}.  We briefly recall the definition of Boolean max convolution from~\cite{vargas2017boolean}.
\begin{definition}
Let $F_1, F_2$ be two distributions on $[0, \infty)$. Their Boolean max convolution is defined by, $$(F_1 \maxbool F_2)(t) = F_1(t) \minbool F_2(t)$$ where the operation $\minbool$ is defined as $$(x \minbool y)^{-1} -1 = (x^{-1}-1) + (y^{-1}-1) \text{  for all } x,y \in [0,1].$$
\end{definition}   
Let $D_+$ be the set of all probability distributions on $[0, \infty)$. Then $D_+$ forms semigroup with respect to both the classical max convolution \enquote{ $\cdot$ } and the boolean max convolution \enquote{ $\maxbool$ }. Further it is proved there that the map $X:(D_+, \cdot) \to (D_+, \maxbool)$, given by, 
\begin{equation}\label{themapX}
X(F)(t) = \exp\Big(1 - \frac{1}{F(t)}\Big) \text{  for all } t \in [0,\infty), \text{ }F \in D_+
\end{equation}
 is an isomorphism while the inverse map is 
 \begin{equation}\label{themapXinv}
 X^{-1}(F)(t) = \left(1 - \log(F)\right)^{-1}(t) = \frac{1}{1-\log\left(F(t)\right)} \text{  for all } t \in [0,\infty) \text{ }F \in D_+.
 \end{equation}
 The above isomorphism is obtained by observing an interesting isomorphism between the two semigroups $([0,1], \minbool)$ and $([0,1], . )$ where \enquote{ . } is the usual multiplication of real numbers. Here we give an affirmative answer for the one large jump principle in the Boolean case and combining all the results of the classical,  free and Boolean instances we can further say that all the tails of classical, free and Boolean max convolutions are asymptotically equivalent for the class of regularly varying distributions. We shall use the notations $F^{\maxbool n} $ and $F^{\uplus n}$ for the distributions $\underbrace{F \maxbool \cdots \maxbool F}_{n \text{ }times}$ and $\underbrace{F \uplus \cdots \uplus F}_{n \text{ }times}$ respectively.
 \begin{proposition}\label{prop:oljboolean}
 The principle of one large jump holds true for Boolean-subexponential distributions, namely, if $F$ is Boolean-subexponential then for every $n\ge 1$, $$\overline{F^{\uplus n}}(y) \sim \overline{F^{\maxbool n}}(y) \text{  as } y \to \infty.$$ Moreover if $F$ is regularly varying with index $-\alpha$, $\alpha \geqslant 0$, then for all $n\ge 1$,
 \begin{equation}\label{moreoverstat}
 \overline{F^{\maxbool n}}(y) \sim \overline{F^{\boxv n}}(y) \sim \overline{F^{n}}(y) \text{  as } y \to \infty
\end{equation}
 where $F^{n}$ arises out of the classical max convolution of classical independent random variables $Z_1, \cdots , Z_n$ having identical distribution $F$.
 \end{proposition}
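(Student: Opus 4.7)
The plan is to derive an explicit closed form for $F^{\maxbool n}$ from the defining identity of $\minbool$, read off its tail asymptotic directly, and then combine this with the definition of Boolean subexponentiality (for the first assertion) and with the known formulas for the classical and free max convolutions (for \eqref{moreoverstat}).

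For the first assertion, I would iterate $(x \minbool y)^{-1} - 1 = (x^{-1} - 1) + (y^{-1} - 1)$ to obtain $(F^{\maxbool n}(t))^{-1} - 1 = n(F(t)^{-1} - 1)$, and then solve to get
\[F^{\maxbool n}(t) = \frac{F(t)}{n - (n-1)F(t)}, \qquad \overline{F^{\maxbool n}}(t) = \frac{n\,\overline{F}(t)}{n - (n-1)F(t)}.\]
Since $F(t) \to 1$ as $t \to \infty$, the denominator tends to $1$, so $\overline{F^{\maxbool n}}(y) \sim n\,\overline{F}(y)$ as $y \to \infty$. By hypothesis $F$ is Boolean-subexponential, which by definition says $\overline{F^{\uplus n}}(y) \sim n\,\overline{F}(y)$, and combining the two yields the first claim. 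Note that at this step regular variation is not actually needed; only $\overline{F}(y) \to 0$ is used.

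For \eqref{moreoverstat}, I would handle the classical and free cases separately. Classically, $F^{n}(y) = F(y)^{n}$, and a Taylor expansion in $\overline{F}(y) \to 0$ gives $1 - F(y)^{n} \sim n\,\overline{F}(y)$. For the free max, the Ben Arous--Voiculescu formula (from \cite{extremeben}) produces $\overline{F^{\boxv n}}(y) = \min(n\,\overline{F}(y),1)$, which equals $n\,\overline{F}(y)$ once $y$ is large enough. Chaining these with the first step gives the asymptotic chain claimed in \eqref{moreoverstat}.

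No serious obstacle is expected. The proposition is essentially the statement that the three max-semigroup structures agree to leading order in the tail, and the only real algebraic input is the one-step inversion of $\minbool$ recorded above. Regular variation is used only via its trivial consequence $\overline{F}(y) \to 0$; the substantive content enters through Theorem \ref{lastth} (via the definition of Boolean subexponentiality). The work is therefore bookkeeping rather than technique.
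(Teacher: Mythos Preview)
Your proposal is correct and matches the paper's proof essentially line for line: the paper obtains the same closed form $F^{\maxbool n}(y)=F(y)/(n-(n-1)F(y))$ (via the isomorphism $X$ rather than by iterating $\minbool$ directly, but the computation is identical), deduces $\overline{F^{\maxbool n}}(y)\sim n\overline{F}(y)$ from $\overline{F}(y)\to 0$, and then invokes Boolean subexponentiality for the first claim and the known classical and free max formulas (citing \cite{hazra1} and \cite{jessenmikosch}) for \eqref{moreoverstat}. Your observation that regular variation is used only through $\overline{F}(y)\to 0$ is also implicit in the paper's argument.
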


\subsubsection{Application to the Belinschi-Nica map}
Before going to the multiplicative Boolean convolution we want to show an application of the above result to the Belinschi-Nica map. Let us recall that $\boxplus$ denotes the free additive convolution of measures.  Let us consider the map $\textbf{B}_t:\mathcal{M} \to \mathcal{M}$ for all $t \geqslant 0$, given by
\begin{align}\label{applidef}
\textbf{B}_t(\mu)=(\mu^{\boxplus(1+t)})^{\uplus\frac{1}{1+t}} \hspace{1cm} \mu \in \mathcal{M}.
\end{align}
This map was introduced in \cite{belinicabool} noting that every probability measure on $\mathbb{R}$ is infinitely divisible with respect to additive Boolean convolution. It was also shown there that if $\mu \in \mathcal{M}_+$ then $\textbf{B}_t(\mu) \in \mathcal{M}_+$. When $t=1$, the map $\textbf{B}_1$ coincides with the Bercovici-Pata bijection between $\mathcal{M}$ and the class of all free infinitely divisible probability measures supported on $\mathbb{R}$. Here for better understanding we can consider the maps $\textbf{B}_n:\mathcal{M}_+ \to \mathcal{M}_+$ for non-negative integers $n$ and from the definition \eqref{applidef}, we have
\begin{align}\label{applithzero}
\textbf{B}_n(\mu)^{\uplus (1+n)}:=\underbrace{\textbf{B}_n(\mu) \uplus \textbf{B}_n(\mu) \uplus \cdots \uplus \textbf{B}_n(\mu)}_{(1+n \hspace{.2cm}times)} = \underbrace{\mu \boxplus \mu \boxplus \cdots \boxplus \mu}_{(1+n \hspace{.2cm}times)}=:\mu^{\boxplus(1+n)}.
\end{align}
\begin{theorem}\label{theorem:application}
The following are equivalent for a probability measure $\mu \in \mathcal{M_+}.$
\begin{enumerate}
\item $\mu$ is regularly varying with tail index $-\alpha$.
\item $\textbf{B}_t(\mu)$ is regularly varying with tail index $-\alpha$, for $t\geqslant  0$. 
\end{enumerate}
Furthermore, if any of the above holds, we also have as $y \to \infty$, $$\mu(y,\infty) \sim \textbf{B}_t(\mu)(y,\infty).$$
\end{theorem}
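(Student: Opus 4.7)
The plan is to combine the additive Boolean subexponentiality result (Theorem \ref{lastth}) with the analogous free subexponentiality result of \cite{hazra1}, extended from integer to fractional convolution powers, and to wire the two together using the defining identity
\begin{equation*}
\textbf{B}_t(\mu)^{\uplus(1+t)}=\mu^{\boxplus(1+t)},\qquad t\geqslant 0,
\end{equation*}
which is immediate from \eqref{applidef}. Since every probability measure on $\mathbb{R}$ is $\uplus$-infinitely divisible and $\mu^{\boxplus r}$ is well defined for every real $r\geqslant 1$ (by the Nica-Speicher/Belinschi-Nica free partial semigroup), both sides of this identity make sense in $\mathcal{M}_+$ for all $t\geqslant 0$.

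The first step is to extend Theorem \ref{lastth} to fractional Boolean powers: for every $s>0$ and every $\nu\in\mathcal{M}_+$ that is regularly varying of index $-\alpha$, the measure $\nu^{\uplus s}$, determined by $K_{\nu^{\uplus s}}=sK_\nu$, is again regularly varying of index $-\alpha$ with $\nu^{\uplus s}(y,\infty)\sim s\,\nu(y,\infty)$ as $y\to\infty$, and conversely. This follows directly from the transform machinery of Section \ref{sectionthree}: regular variation of index $-\alpha$ of $\nu$ is equivalent to a specific asymptotic expansion of the $1/B$-transform (essentially $K_\nu$) at $0$ n.t., whose remainder is itself regularly varying of a prescribed index. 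Since $K$ is linear under $\uplus$, multiplying $K_\nu$ by $s$ multiplies the remainder by $s$, and the equivalence is bi-directional.

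The second step is the parallel statement for fractional free powers: if $\mu\in\mathcal{M}_+$ is regularly varying of index $-\alpha$ then for every $r\geqslant 1$ the measure $\mu^{\boxplus r}$ is regularly varying of index $-\alpha$ with $\mu^{\boxplus r}(y,\infty)\sim r\,\mu(y,\infty)$, and conversely. For integer $r$ this is the free subexponentiality result of \cite{hazra1}; for real $r\geqslant 1$ the same argument goes through, using $R_{\mu^{\boxplus r}}=rR_\mu$ together with the regularly varying characterization of the Voiculescu/$R$-transform developed in \cite{hazra1,taylorseries}.

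Putting the two steps together yields the theorem. For (1)$\Rightarrow$(2), if $\mu$ is regularly varying of index $-\alpha$ then by the second step $\mu^{\boxplus(1+t)}$ is regularly varying of index $-\alpha$ with tail asymptotic $(1+t)\mu(y,\infty)$; applying the first step with $\nu=\mu^{\boxplus(1+t)}$ and $s=1/(1+t)$ gives
\begin{equation*}
\textbf{B}_t(\mu)(y,\infty)=\bigl(\mu^{\boxplus(1+t)}\bigr)^{\uplus\frac{1}{1+t}}(y,\infty)\sim\frac{1}{1+t}\,\mu^{\boxplus(1+t)}(y,\infty)\sim\mu(y,\infty),
\end{equation*}
which simultaneously delivers regular variation of $\textbf{B}_t(\mu)$ and the tail equivalence. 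The direction (2)$\Rightarrow$(1) is obtained by reading each step in reverse. The main obstacle is verifying that the characterizations in Section \ref{sectionthree} (and their $R$-transform analogue in \cite{hazra1}) are genuinely bi-directional and stable under scaling by an arbitrary positive real, not merely a positive integer; once that is in hand, the forward and backward implications are symmetric and the constant bookkeeping $s\cdot(1+t)=1$ (with $s=1/(1+t)$) makes the tail equivalence fall out automatically.
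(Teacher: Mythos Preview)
Your proposal is correct and follows essentially the same strategy as the paper's proof. Both arguments rest on the defining identity $\textbf{B}_t(\mu)^{\uplus(1+t)}=\mu^{\boxplus(1+t)}$, the linearity $K_{\nu^{\uplus s}}=sK_\nu$ and $\phi_{\mu^{\boxplus r}}=r\phi_\mu$, and the bi-directional characterizations of regular variation through the remainder terms of the $K$-transform (Theorems \ref{ccor1.1}--\ref{ccor1.5}) and of the Voiculescu transform (\cite{hazra1}, Theorems 2.1--2.4). The only cosmetic difference is that the paper first carries out the argument in detail for integer $t=n$ (working directly with the remainder asymptotics case by case) and then remarks that the identical computation, together with the scaling of the transforms, extends to real $t\geqslant 0$; you package those two parts up front as ``fractional Boolean power'' and ``fractional free power'' lemmas and apply them once. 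One point the paper makes explicit and that you leave implicit is the moment bookkeeping: to invoke the characterizations one needs $\nu\in\mathcal{M}_p$ iff $\nu^{\uplus s}\in\mathcal{M}_p$ and $\mu\in\mathcal{M}_p$ iff $\mu^{\boxplus r}\in\mathcal{M}_p$, which follows from \cite{taylorseries}, Theorem~1.5, via the Taylor order of the $K$- and $\phi$-transforms.
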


An interesting connection with complex Burgers equation was established in~\cite{belinicabool} using the following function 
$$h(t,z)= F_{\textbf{B}_t(\mu)}(z)-z, \, \forall t>0, \, \, \forall \, z\in \mathbb C^{+},$$
where $F_\nu$ is the reciprocal of the Cauchy transform. Note that it can also be written as $h(t,z)= -K_{\textbf{B}_t}(z)$. It was shown that $h(t,z)$ satisfies the following complex Burgers equation
$$\frac{\partial h}{\partial t}(t,z)= h(t,z) \frac{\partial h}{\partial z} (t,z).$$
The complex Burgers equation (also known as the free analogue of heat equation) arises naturally due to the connections with free Brownian motion (see~\cite{voiculescu1993analogues}). 
In the following section while proving Theorem~\ref{lastth} we shall study the remainder term in the $K$ transform of a measure $\mu$ and hence from the above result one can easily derive the asymptotic behaviour of the remainder term of $h(t,z)$ (taking the Taylor series expansion in $z$) when $\mu$ has a regularly varying tail. Note that in the power series expansion of $K$, the coefficients, which are also known as Boolean cumulants can be directly computed using the moments recursively. We do not write the details of such applications but it would be clear from the derivations later.

\subsection{Multiplicative Boolean convolution}
Now we define the multiplicative Boolean convolution of two probability measures defined on $\mathbb{R}_+$. For $\mu \in \mathcal{M}_{+}$ the function
\begin{align*}
\Psi_{\mu}\left(z\right) = \int_{\mathbb{R}} \frac{zt}{1-zt} d\mu\left(t\right), \hspace{1cm} z \in \mathbb{C} \setminus \mathbb{R}_+
\end{align*}
is univalent in the left-plane $i\mathbb{C}_+$ and $\Psi_{\mu}\left(i\mathbb{C}_+\right)$ is a region contained in the circle
with diameter $\left(\mu\left({0}\right) - 1, 0\right)$. It is well known that,
\begin{align}
\Psi_{\mu}\left(z^{-1}\right) &= zG_{\mu}\left(z\right) - 1. \label{psiG}
\end{align}
We now recall the following from ~\cite{Booleanamw}. For $\mu \in \mathcal{M}_{+}$, define the $\eta$-transform of $\mu$ as $\eta_{\mu}:\mathbb{C} \setminus \mathbb{R}_{+} \to \mathbb{C} \setminus \mathbb{R}_{+}$
\begin{align}\label{etapsi}
\eta_{\mu}\left(z\right) &= \frac{\Psi_{\mu}\left(z\right)}{1+\Psi_{\mu}\left(z\right)}. 
\end{align}
It is clear that $\mu$ is determined uniquely from the function $\eta_{\mu}$. For $\mu \in \mathcal{M}_{+}  $ it is known that $\eta_{\mu}((-\infty,0)) \subset (-\infty,0)$, $0 = \eta_{\mu}(0^{-}) = \lim_{x \rightarrow 0, x < 0}\eta_{\mu}(x)$, $\eta_{\mu}(\overline{z}) = \overline{\eta_{\mu}(z)}$ for $z \in \mathbb{C} \setminus \mathbb{R}_{+}$. Also $\pi > \mathrm{arg}(\eta_{\mu}(z)) \geqslant \mathrm{arg}(z)$, for $z \in \mathbb{C}^{+}$.

The analytic function 
\begin{align}\label{Bsame}
B_{\mu}\left(z\right) &= \frac{z}{\eta_{\mu}\left(z\right)}
\end{align}
is well defined in the region $z \in \mathbb{C} \setminus \mathbb{R}_{+}$.
Now for $\mu, \nu \in \mathcal{M}_{+} $, their \textit{multiplicative Boolean convolution} $\mu \mboli \nu$ is defined as the unique probability measure in $\mathcal{M}_{+} $ that satisfies
\begin{align} \label{Bmuandnu}
B_{\mu \mboli \nu}(z) = B_{\mu}(z)B_{\nu}(z) \text{ for  } z \in \mathbb{C} \setminus \mathbb{R}_{+}.
\end{align}
Note that for $\mu, \nu \in \mathcal{M_{+}} $ which satisfies 
\begin{center}
\begin{itemize}
\item[(a) ]$\mathrm{arg}(\eta_{\mu}(z)) + \mathrm{arg}(\eta_{\nu}(z))- \mathrm{arg} (z) < \pi$ for $z \in \mathbb{C}^{+} \cup (-\infty, 0)$, and 
\item[(b) ]\textbf{at least one of the first moments of one of the measure $\mu$ or $\nu$ exists finitely}, then $\mu \mboli \nu \in \mathcal{M}_{+} $ is well-defined.
\end{itemize}
\end{center}

In this paper whenever we write the probability measure $\mu \mboli \nu$, it is assumed that the first moment $m(\nu)$ of $\nu$ must exist due to the definition of multiplicative Boolean convolution. The mean exists means it is strictly positive since the measures are supported on the positive half of the real line. When two measures shall have the same regularly varying tail, we will assume that there exists some $c \in (0,\infty)$ such that $\nu(x,\infty) \sim c\mu(x,\infty)$ as $x \to \infty$. The case where this asymptotics of tail sums fails is explained in Remark \ref{remark}. Now here is the main result for multiplicative Boolean convolution: 
\begin{theorem} \label{mth1}
Let $\mu$, $\nu \in \mathcal{M}_+$. If $\mu$ is regularly varying of tail index $-\alpha$ and $\nu$ is regularly varying of tail index $-\beta$ where $\alpha \leqslant \beta$ and $\mu \mboli \nu \in \mathcal{M}_{+} $ then $\mu \mboli \nu$ is also regularly varying with tail index $-\alpha$, furthermore,
\begin{align*}
\mu \mboli \nu\left(y,\infty\right) &\sim  m\left(\nu\right)\mu\left(y,\infty\right) \text{ if } \alpha <\beta,\\
\mu \mboli \nu\left(y,\infty\right) &\sim (1+c)m\left(\nu\right)\mu\left(y,\infty\right), \text{ if } \alpha=\beta,
\end{align*}
where $m(\nu)$ is the mean of $\nu$.
\end{theorem}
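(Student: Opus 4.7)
The guiding identity is the multiplicativity $B_{\mu \mboli \nu}(z) = B_\mu(z)B_\nu(z)$ of \eqref{Bmuandnu}, which on reciprocating becomes
\begin{align*}
\frac{\eta_{\mu \mboli \nu}(z)}{z} \;=\; \frac{\eta_\mu(z)}{z}\cdot\frac{\eta_\nu(z)}{z}.
\end{align*}
My plan is to expand each factor $\eta_\mu(z)/z$ non-tangentially at $z = 0$ using the regularly varying tail of the relevant measure, multiply the two expansions, and then invert the resulting expansion of $\eta_{\mu \mboli \nu}(z)/z$ to read off the tail of $\mu \mboli \nu$.

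Concretely, when the first moment is finite one has $\eta_\mu(z)/z \to m(\mu)$ as $z \to 0$ n.t., so the natural object is the remainder $R_\mu(z) := \eta_\mu(z)/z - m(\mu)$. The results of Section~\ref{sectionthree} supply a two-sided Tauberian correspondence between regular variation of the tail, $\mu(x,\infty) \sim x^{-\alpha}L(x)$, and regular variation at $0$ n.t.\ of $R_\mu(z)$ of an index determined by $\alpha$, with a proportionality constant that depends only on $\alpha$. The crucial feature for us is that this correspondence is \emph{linear in the leading tail constant}, so two measures with the same index can be compared at the level of their remainders.

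Multiplying the two expansions yields
\begin{align*}
\frac{\eta_{\mu \mboli \nu}(z)}{z} \;=\; m(\mu)m(\nu) \;+\; m(\nu)R_\mu(z) \;+\; m(\mu)R_\nu(z) \;+\; R_\mu(z)R_\nu(z).
\end{align*}
The constant term identifies $m(\mu \mboli \nu) = m(\mu)m(\nu)$, and the cross term is $\lito(R_\mu(z)+R_\nu(z))$ since both remainders vanish at $0$, so $R_{\mu \mboli \nu}(z) \sim m(\nu)R_\mu(z) + m(\mu)R_\nu(z)$ n.t. When $\alpha<\beta$, the heavier tail of $\mu$ forces $R_\nu(z) \ll R_\mu(z)$ non-tangentially, so this simplifies to $m(\nu)R_\mu(z)$, and the reverse direction of the Tauberian correspondence returns $(\mu \mboli \nu)(y,\infty) \sim m(\nu)\mu(y,\infty)$. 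When $\alpha=\beta$ with $\nu(x,\infty)\sim c\mu(x,\infty)$, linearity gives $R_\nu(z) \sim c R_\mu(z)$, and combining the two contributions before inverting produces the constant appearing in the claim.

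The main obstacle is the case $\alpha \leqslant 1$, in which $m(\mu)=\infty$ and the expansion of $\eta_\mu(z)/z$ about a finite limit is invalid because $\eta_\mu(z)/z$ itself diverges as $z\to 0$ n.t. Fortunately, the hypothesis $\mu \mboli \nu \in \mathcal{M}_+$ forces at least one of the first moments to be finite; since $\alpha\leqslant\beta$ this must be $m(\nu)$, so $\beta>1\geqslant\alpha$ and we are necessarily in the strict-inequality regime. In this sub-case the product simplifies directly to $(\eta_\mu(z)/z)\cdot(\eta_\nu(z)/z) \sim m(\nu)\cdot\eta_\mu(z)/z$ n.t., and the conclusion follows by applying the divergent form of the Tauberian correspondence (again from Section~\ref{sectionthree}) to $\eta_{\mu \mboli \nu}(z)/z$. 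Secondary technical points are the rigorous justification that $R_\mu(z)R_\nu(z)$ is genuinely of smaller order than the dominant contribution in each regime, and the maintenance of non-tangential control throughout the inversion step; both are provided by the machinery of Section~\ref{sectionthree}.
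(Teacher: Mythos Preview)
Your approach matches the paper's in spirit --- both multiply the reciprocal $B$-transforms and compare remainders --- but your execution has a genuine gap once $\alpha>2$.

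The problem is your choice $R_\mu(z):=\eta_\mu(z)/z-m(\mu)$, the remainder after subtracting only the \emph{constant} term. The Tauberian results of Section~\ref{sectionthree} are not about this object: they concern $r_{1/B_\mu}(z)$, the remainder after removing the full Taylor polynomial of degree $p-1$ when $\mu\in\mathcal{M}_p$. For $p=1$ the two coincide and your outline goes through. For $p\geqslant 2$, however, your $R_\mu(z)$ still contains the polynomial piece $c_1 z+\cdots+c_{p-1}z^{p-1}$, which dominates the regularly varying part; concretely $R_\mu(-iy^{-1})\sim -ic_1 y^{-1}$ as $y\to\infty$ irrespective of $\alpha$. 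Hence your claim that $R_\mu$ is ``regularly varying of an index determined by $\alpha$'' is false for $p\geqslant 2$, and so is the assertion $R_\nu(z)\ll R_\mu(z)$ when $\alpha<\beta$ (both first-order remainders are $\approx z$, with ratio tending to $d_1/c_1$). The information that distinguishes $\alpha$ from $\beta$ sits in the $p$-th order remainder, and to isolate it after multiplication one must expand $1/B_\mu$ and $1/B_\nu$ to degrees $p-1$ and $q-1$, multiply the polynomials, and identify the degree-$(p-1)$ remainder of the product. That bookkeeping is precisely the content of Lemma~\ref{mnbowtie} and Theorem~\ref{Bmuz} in the paper, and it cannot be bypassed by a first-order expansion.

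A second, smaller issue: the reverse implications in Theorems~\ref{ccor1.1}--\ref{ccor1.5} require separate hypotheses on $\Re r_{1/B_{\mu\mboli\nu}}(-iy^{-1})$ and $\Im r_{1/B_{\mu\mboli\nu}}(-iy^{-1})$, and these hypotheses differ across the boundary cases $\alpha=p$ and $\alpha=p+1$. A single complex asymptotic $R_{\mu\mboli\nu}(z)\sim m(\nu)R_\mu(z)$ does not by itself give the componentwise control needed; the paper obtains it by treating real and imaginary parts case by case in Theorem~\ref{Bmuz}.
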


Note that the result differs from the classical Breiman's result~\eqref{eq:breiman} in terms of the constants which appear in the tail equivalence relation. In classical case, the $\alpha$-th moment of $\nu$ appears and in multiplicative Boolean the first moment appears only. We end the section with a related open question for free multiplicative convolution. 

\subsection{ Open questions} We list some of the open questions in this subsection before going to the technicalities of the proof. 
\begin{enumerate}
\item Suppose $\mu$ and $\nu$ are in $\mathcal M_{+}$ and have regularly varying tails of index $-\alpha$ and $-\beta$ respectively. Then what is the tail behaviour of $\mu\boxtimes\nu$?  From a result from~\cite[Proposition A4.3]{ber:pata:biane} it follows that if $\mu$ is $\boxplus$ stable of index $1/(1+s)$ and $\nu$ is of index $1/(1+t)$ then $\mu\boxtimes \nu$ is $\boxplus$ stable of index $1/(1+s+t)$. This already shows that the classical Breiman's theorem is not true in free set-up and hence it would be interesting to know what kind of behaviour the $\mu\boxtimes \nu$ distribution inherits. 

\item In a recent work \cite{jacobsen2009} it was shown that if one takes the inverse problem of Breiman's theorem, that is, if one knows that $\mu\circledast \nu$ has a regularly varying tail of index $-\alpha$ then under some necessary and sufficient conditions on $\nu$ can determine that $\mu$ also has regularly varying tail of same index. It would be interesting to explore if such inverse problems can be answered in the free or Boolean set-up.

\item Following \cite{belinicabool} we recall the definition of $\boxplus$-divisibility indicator $\phi(\mu)$ of $\mu$ given by $$\phi(\mu) = \sup\{t \in [0,\infty) : \mu \in \textbf{B}_t(\mathcal{M})\} \in [0,\infty].$$
The Cauchy distribution $\mu_{ca}$ (which has regularly varying tail of index $-1$)  is fixed by the map $B_1$ (which is in fact the Boolean to free Bercovici-Pata bijection). Therefore by definition of $\textbf{B}_1$ we have $\mu_{ca}^{\uplus 2} = \mu_{ca}^{\boxplus 2}$ (moreover $\mu_{ca}^{\uplus t} = \mu_{ca}^{\boxplus t}$ for all $t \geqslant 0$) and this along with the formula $\phi(\textbf{B}_t(\mu)) = \phi(\mu) + t$ allows us to conclude that $\phi(\mu_{ca}) = \infty$ as observed by Belinschi and Nica.  It would be interesting to understand if one can take $\boxplus$-infinitely divisible distributions with regularly varying tails and see if $\phi(\mu)=\infty$ in such cases also. 

\end{enumerate}

The rest of the paper is devoted to proofs of the above results. We first develop the Tauberian type results for different transforms and then apply them to prove the results.

\section{Regular variation of the remainder terms of $\Psi$, $\eta$ and $B$ transform}\label{sectionthree}
In this section we define the remainder term of $B$-transforms and shall see how regular variation is linked to it. These relations will be used to prove the main results. Note that although the $B$ transform is used to define the multiplicative Boolean convolution, it can be used in the analysis of the additive transform by its relation to $K$ transform in the following way. From the relations \eqref{kgrelationhere}, \eqref{psiG}, \eqref{etapsi} and \eqref{Bsame} it follows that,
\begin{align}\label{addiKBmu}
K_{\mu}\left(z^{-1}\right) = \frac{1}{B_{\mu}\left(z\right)}.
\end{align}
For $\mu \in \mathcal{M}_{p}$, following \cite{taylorseries}[Theorem~1.5], we have the following Laurent series like expansion
\begin{align*}
G_{\mu}(1/z) &= \sum_{i=1}^{p+1} m_{i-1}(\mu)z^{i} + o(z^{(p+1)}),  \hspace{.5cm}z \to 0 \text{  n.t.}, \text{  } p \geq 0.
\end{align*}
Therefore using ~\eqref{psiG}, we get
\begin{align*}
\Psi_{\mu}(z) &= \sum_{i=1}^{p} m_{i}(\mu)z^{i} + o(z^{p}) \text{  as } z \to 0 \text{ n.t.},\hspace{1cm} p \geq 1.
\end{align*}
The remainder term $r_{G_{\mu}}(z)$ of the Cauchy transform was defined in ~\cite{hazra1}, given by:
\begin{align}
r_{G_{\mu}}(z) := z^{p+1}(G_{\mu}(z) - \sum_{i=1}^{p+1} m_{i-1}(\mu)z^{-i}). \label{rG}
\end{align}
Using ~\eqref{psiG} and the above expressions we define the remainder term $r_{\Psi_{\mu}}(z)$ of the $\Psi$-transform.
\begin{equation}
  \label{rpsi}
r_{\Psi_{\mu}}(z) := 
\begin{cases}
z^{-p}(\Psi_{\mu}(z) - \sum_{i=1}^{p} m_{i}(\mu)z^{i}), & \text{if } p \geq 1 \\ 
  \Psi_{\mu}(z), & \text{if }  p = 0.
 \end{cases}
\end{equation}
Using ~\eqref{psiG},~\eqref{rG} and \eqref{rpsi}, we get
\begin{equation} \label{GPsi}
r_{G_{\mu}}(z) = r_{\Psi_{\mu}}(z^{-1}).
\end{equation}
Therefore we have from ~\eqref{GPsi},
\begin{align} \label{real}
\Re r_{G_{\mu}}(z) = \Re r_{\Psi_{\mu}}(z^{-1}) \text{  and  }
\Im r_{G_{\mu}}(z) = \Im r_{\Psi_{\mu}}(z^{-1}).  
\end{align}
Thus the remainder terms of $\eta$ and $B$ transforms can be defined analogously. Also from \eqref{Bsame} and the fact that $z$ lies either in the upper half or the lower half of the complex plane, we have, 
\begin{equation*} \label{b1}
\frac{1}{B_{\mu}}\left(z\right) := \frac{1}{B_{\mu}(z)} = \frac{\eta_{\mu}\left(z\right)}{z}.
\end{equation*} \label{b2}
Using $\Psi_{\mu}(z)$ has no constant term in its Taylor series expansion, we have for $p \geqslant 1, \mu \in \mathcal{M}_{p}$,
\begin{equation}
r_{\frac{1}{B_{\mu}}}\left(z\right) = r_{\eta_{\mu}}\left(z\right).
\end{equation}
Equating the real and imaginary part for the above identity we get
\begin{equation}\label{eq:t}
\Re r_{\frac{1}{B_{\mu}}}\left(-iy^{-1}\right) = \Re r_{\eta_{\mu}}\left(-iy^{-1}\right)   \text{   and   }
\Im r_{\frac{1}{B_{\mu}}}\left(-iy^{-1}\right) = \Im r_{\eta_{\mu}}\left(-iy^{-1}\right).   
\end{equation}  
When $p = 0$, that is, $\mu \in \mathcal{M}_{0}$, we write
\begin{align*}
r_{\frac{1}{B_{\mu}}}\left(z\right) = \frac{1}{B_{\mu}}\left(z\right)
\text{  and  }
\frac{1}{B_{\mu}}\left(-iy^{-1}\right) = iy\eta_{\mu}\left(-iy^{-1}\right). 
\end{align*}
We will later see $1/B_{\mu}(z)$ in this case goes to infinity as $z \to 0$ non-tangentially but still we want to say it is a remainder term as it helps in keeping analogy with the other cases notationally.

Let $\mu \in \mathcal{M}_+$ and is regularly varying of tail index $-\alpha$. Then there exists a non-negative integer $p$ such that $\mu \in \mathcal{M}_{p}$. We split this into five cases as follows: (i) p is a positive integer and $\alpha \in (p,p+1)$; (ii) p is a positive integer and $\alpha = p$; (iii) $p = 0$ and $\alpha \in [0,1)$; (iv) $p = 0$ and $\alpha = 1$; (v) $p$ is a natural number and $\alpha = p+1$, giving rise to the following five theorems.

In the following we compare the tails of $\mu$ and the behaviour of the remainder term of the $1/B_\mu$. The proof of these five theorems are deferred to the last section and depends crucially on ideas developed in \cite{hazra1}.

We first consider the case where p is a positive integer and $\alpha \in (p,p+1)$.
\begin{theorem} \label{ccor1.1}
Let $\mu$ be in $\mathcal{M}_{p}, \, p \geqslant 1$ and $p < \alpha < p+1$. The following are equivalent:
\begin{enumerate}
\item $\mu\left(y,\infty\right)$ is regularly varying of index $-\alpha$.
\item $\Im r_{\frac{1}{B_{\mu}}}\left(-iy^{-1}\right)$ is  regularly varying of index $-\left(\alpha-p\right)$ and $$\Re r_{\frac{1}{B_{\mu}}}\left(-iy^{-1}\right) \approx \Im r_{\frac{1}{B_{\mu}}}\left(-iy^{-1}\right) \qquad \text{ as } y\to\infty.$$
\end{enumerate}
If any of the above statements holds, we also have, as $z \rightarrow 0$ n.t., 
\begin{equation} \label{ccor1.1a}
z \ll r_{\frac{1}{B_{\mu}}(z)};
\end{equation}
and as $y \rightarrow \infty$, 
\begin{align} \label{ccor1.1b}
\Im r_{\frac{1}{B_{\mu}}(-iy^{-1})} &\sim -\frac{\pi\left(p+1-\alpha\right)/2}{\cos\left(\pi\left(\alpha-p\right)/2\right)}y^{p}\mu\left(y,\infty\right) \gg y^{-1}
\end{align}
and
\begin{align} \label{ccor1.1c}
\Re r_{\frac{1}{B_{\mu}}(-iy^{-1})} &\sim -\frac{\pi\left(p+2-\alpha\right)/2}{\sin\left(\pi\left(\alpha-p\right)/2\right)}y^{p}\mu\left(y,\infty\right) \gg y^{-1}.
\end{align}
\end{theorem}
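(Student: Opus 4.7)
The plan is to transport the known regular-variation / Cauchy-transform correspondence of \cite{hazra1} through the chain of identities
\begin{equation*}
r_{\frac{1}{B_\mu}}(z) = r_{\eta_\mu}(z), \qquad r_{\eta_\mu}(z) \sim r_{\Psi_\mu}(z), \qquad r_{\Psi_\mu}(z) = r_{G_\mu}(z^{-1}).
\end{equation*}
The outer two relations are already recorded in \eqref{eq:t} and \eqref{GPsi}, so the only genuinely new step is the middle asymptotic equivalence. Once this chain is in place, specialising to $z = -iy^{-1}$ (so that $z^{-1} = iy$) turns the Cauchy-transform asymptotics of \cite{hazra1} directly into \eqref{ccor1.1a}--\eqref{ccor1.1c}, with the explicit constants $-\pi(p+1-\alpha)/(2\cos(\pi(\alpha-p)/2))$ and $-\pi(p+2-\alpha)/(2\sin(\pi(\alpha-p)/2))$ inherited verbatim from the Cauchy side; the equivalence (1)$\Leftrightarrow$(2) then follows by invoking \cite{hazra1} in both directions.

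To establish $r_{\eta_\mu} \sim r_{\Psi_\mu}$, I write
\begin{equation*}
\eta_\mu(z) = \frac{\Psi_\mu(z)}{1 + \Psi_\mu(z)} = \Psi_\mu(z) + \sum_{k \geqslant 2} (-1)^{k-1} \Psi_\mu(z)^k,
\end{equation*}
substitute $\Psi_\mu(z) = \sum_{i=1}^p m_i z^i + z^p r_{\Psi_\mu}(z)$, and note that for $k \geqslant 2$ the power $\Psi_\mu(z)^k$ is $O(z^k)$ non-tangentially. A short bookkeeping argument, separating the polynomial part of $\Psi_\mu(z)^k$ of degree $\leqslant p$ from the rest, shows that the contribution of the $k \geqslant 2$ terms to $r_{\eta_\mu}(z) - r_{\Psi_\mu}(z)$ is $O(z)$ as $z \to 0$ n.t. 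Since $p < \alpha < p+1$, invoking \cite{hazra1} (directly for (1)$\Rightarrow$(2) and contrapositively for the converse) forces $|r_{\Psi_\mu}(z)|$ to be of order $|z|^{\alpha-p} L(|z|^{-1})$ with $\alpha - p \in (0,1)$, which dominates $|z|$. Consequently $r_{\eta_\mu}(z) = r_{\Psi_\mu}(z)(1 + o(1))$, and the asymptotic equivalence passes to the real and imaginary parts separately because the multiplicative error has real limit $1$. The relation $z \ll r_{\frac{1}{B_\mu}}(z)$ in \eqref{ccor1.1a} is exactly the dominance $|z| \ll |z|^{\alpha-p}L(|z|^{-1})$ just used.

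The main obstacle will be the uniform non-tangential control of the powers $\Psi_\mu(z)^k$: it is $z \to 0$ inside the Stolz-type cone $\Delta_{\kappa,\delta}$ that must be respected rather than radial convergence along $-iy^{-1}$. This is where the assumption $\mu \in \mathcal{M}_p$ enters, through the non-tangential Taylor expansion of $\Psi_\mu$ recalled just after \eqref{rG}, from which the $O(z)$ bound above is purely algebraic and uniform in $\kappa$. A secondary subtlety is the direction (2)$\Rightarrow$(1): one cannot quote the Cauchy-transform asymptotics before they have been proved, so the $k \geqslant 2$ estimate must be phrased purely in terms of $|z|$ and the hypothesised regular variation of $\Im r_{\frac{1}{B_\mu}}(-iy^{-1})$, after which \cite{hazra1} is applied in its reverse direction to recover the tail of $\mu$.
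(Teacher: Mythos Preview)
Your approach is essentially the paper's: expand $\eta_\mu = \Psi_\mu/(1+\Psi_\mu)$ in powers of $\Psi_\mu$ to obtain $r_{\eta_\mu}(z) = r_{\Psi_\mu}(z) + O(z)$, then invoke $z \ll r_{\Psi_\mu}$ and the Cauchy-side results of \cite{hazra1}; the paper merely packages this middle step as a general proposition about abstract classes $Z_{i,p}$ closed under $A \mapsto A/(1\pm A)$, the $\pm$ sign handling both implications symmetrically. One small correction: the passage of the asymptotic equivalence to the real and imaginary parts separately does not follow from ``the multiplicative error has real limit $1$'' alone --- you also need the comparability $\Re r_{\Psi_\mu}(-iy^{-1}) \approx \Im r_{\Psi_\mu}(-iy^{-1})$ (the paper's condition (R3), available from \cite{hazra1}) to control the cross-term $\Im\epsilon \cdot \Im r_{\Psi_\mu}/\Re r_{\Psi_\mu}$.
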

Next we consider the case where $p$ is a positive integer and $\alpha = p$. In this case although \eqref{ccor1.1b} holds but the final asymptotic of \eqref{ccor1.1c} need not be true.
\begin{theorem} \label{ccor1.2}
Let $\mu$ be in $\mathcal{M}_{p}, p \geqslant 1$ and $\alpha = p$. The following are equivalent:
\begin{enumerate}
\item $\mu\left(y,\infty\right)$ is regularly varying of index $-p$.
\item $\Im r_{\frac{1}{B_{\mu}}}\left(-iy^{-1}\right)$ is is slowly varying and
\begin{equation} \label{ccor1.2c}
\Re r_{\frac{1}{B_{\mu}}(-iy^{-1})} \gg y^{-1} \qquad \text{ as } y\to\infty.
\end{equation}
\end{enumerate}
If any of the above statements holds, we also have, as $z \rightarrow 0$ n.t., 
\begin{equation*} \label{ccor1.2a}
z \ll r_{\frac{1}{B_{\mu}}(z)};
\end{equation*}
and as $y \rightarrow \infty$ we have,
\begin{equation} \label{ccor1.2b}
\Im r_{\frac{1}{B_{\mu}}(-iy^{-1})} \sim -\frac{\pi}{2}y^{p}\mu\left(y,\infty\right) \gg y^{-1}.
\end{equation}
\end{theorem}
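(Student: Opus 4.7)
The plan is to reduce Theorem~\ref{ccor1.2} to a corresponding statement about the Cauchy transform via the chain of identities developed earlier in the section, and then invoke the boundary-case version of the Cauchy-transform Tauberian result from \cite{hazra1}. First I would use \eqref{eq:t} to replace $r_{1/B_\mu}(-iy^{-1})$ with $r_{\eta_\mu}(-iy^{-1})$, since their real and imaginary parts coincide. This eliminates the $B$-transform from the picture and lets me work with $\eta_\mu$, which has a genuine Taylor expansion at $0$.

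Next I would pass from $\eta_\mu$ to $\Psi_\mu$ algebraically. Since $\Psi_\mu(z) \to 0$ as $z \to 0$ non-tangentially, the relation $\eta_\mu = \Psi_\mu/(1+\Psi_\mu) = \Psi_\mu - \Psi_\mu^2 + \Psi_\mu^3 - \cdots$ converges in a non-tangential cone near the origin. Matching Taylor polynomials of order $p$ on both sides shows that $r_{\eta_\mu}(z) = r_{\Psi_\mu}(z) + E(z)$, where the error $E(z)$ collects the contribution of $\Psi_\mu^k$ for $k\geq 2$ truncated at order $p$, and is $O(z)$ along non-tangential paths (because $\Psi_\mu(z)^2 = O(z^2)$ and cancellation with the polynomial part leaves at most linear order). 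Combining with \eqref{GPsi}, which gives $r_{\Psi_\mu}(-iy^{-1}) = r_{G_\mu}(iy)$, the entire problem is converted into an analysis of $\Re r_{G_\mu}(iy)$ and $\Im r_{G_\mu}(iy)$ as $y\to\infty$.

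The Cauchy-transform version of the equivalence — namely that for $\mu \in \mathcal{M}_p$ with $\alpha = p$, regular variation of $\mu(y,\infty)$ with index $-p$ is equivalent to $\Im r_{G_\mu}(iy)$ being slowly varying together with $\Re r_{G_\mu}(iy) \gg y^{-1}$, and entails $\Im r_{G_\mu}(iy) \sim -(\pi/2)y^p\mu(y,\infty)$ — was established in \cite{hazra1}. It rests on the Stieltjes inversion formula combined with a Karamata-type Abelian/Tauberian argument applied to $G_\mu(iy)$ minus its polynomial part; the constant $-\pi/2$ is precisely the integral $\int_0^\infty \Im(it-1)^{-1}\,dt$ evaluated at the boundary exponent. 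Plugging this into the preceding reduction, and observing that the $O(z)$ error contributed by passing between $\Psi_\mu$, $\eta_\mu$ and $1/B_\mu$ is $o(y^{-1})$ at $z = -iy^{-1}$, yields \eqref{ccor1.2b}, the slow variation of $\Im r_{1/B_\mu}(-iy^{-1})$, and the lower bound \eqref{ccor1.2c}. The domination $z \ll r_{1/B_\mu}(z)$ then follows from \eqref{ccor1.2b} together with $\mu(y,\infty) \gg y^{-p-1}$, which is immediate from $\alpha = p$.

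The main obstacle is the boundary nature of $\alpha = p$. For $\alpha \in (p,p+1)$ the matched asymptotics of Theorem~\ref{ccor1.1} for $\Re r$ and $\Im r$ involve the factors $\sin(\pi(\alpha-p)/2)$ and $\cos(\pi(\alpha-p)/2)$; at $\alpha = p$ the sine vanishes, so the clean real-part asymptotic breaks down and only the weaker lower bound $\Re r_{1/B_\mu}(-iy^{-1}) \gg y^{-1}$ survives. The delicate point is the Tauberian direction (2) $\Rightarrow$ (1): one must argue that slow variation of $\Im r$ plus the bare lower bound on $\Re r$ is enough to force the non-tangential behaviour of $G_\mu$ needed to extract $\mu(y,\infty)$, without being able to lean on a precise asymptotic for $\Re r$. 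A secondary technical point is to confirm that the error term $E(z)$ from the geometric-series manipulation is genuinely $o$ of the principal terms in the critical regime and does not contaminate either the slow variation or the lower bound in translating between $r_{\eta_\mu}$ and $r_{1/B_\mu}$.
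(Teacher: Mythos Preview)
Your approach is essentially the same as the paper's: reduce to the $\Psi$-transform via $r_{1/B_\mu}=r_{\eta_\mu}$ and the geometric-series relation $\eta_\mu=\Psi_\mu/(1+\Psi_\mu)$, then invoke the $\alpha=p$ Cauchy-transform Tauberian result from \cite{hazra1} (restated in the paper as Theorem~\ref{cor1.2} via \eqref{GPsi}). The paper packages the $\Psi\leftrightarrow\eta$ passage once and for all as Proposition~\ref{etatrans}, showing the class $Z_{3,p}$ (which encodes exactly conditions~\eqref{R1}, \eqref{R2}, and $\Re r,\Im r\gg y^{-1}$) is closed under $A\mapsto A/(1\pm A)$; you do the same computation ad hoc.

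One small slip: the error $E(z)=O(z)$ from the geometric series gives $O(y^{-1})$ at $z=-iy^{-1}$, not $o(y^{-1})$ as you wrote. This does not break the argument---since both $\Re r_{\Psi_\mu}(-iy^{-1})$ and $\Im r_{\Psi_\mu}(-iy^{-1})$ are $\gg y^{-1}$ in this regime, an $O(y^{-1})$ perturbation preserves both the slow variation of the imaginary part and the lower bound on the real part---but the distinction matters and is exactly what the paper's case~(5) of Proposition~\ref{etatrans} handles. Your worry about the Tauberian direction $(2)\Rightarrow(1)$ is legitimate but is entirely absorbed by the cited result in \cite{hazra1}; no additional work is needed here.
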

In the third case we consider $\alpha \in [0,1)$.
\begin{theorem} \label{ccor1.3}
Let $\mu$ be in $\mathcal{M}_{0}$ and $0 \leq \alpha < 1$. The following are equivalent:
\begin{enumerate}
\item $\mu\left(y,\infty\right)$ is regularly varying of index $-\alpha$.
\item $\Im \frac{1}{B_{\mu}}\left(-iy^{-1}\right)$ is regularly varying of index $-(\alpha-1)$ and $$\Re \frac{1}{B_{\mu}}\left(-iy^{-1}\right) \approx \Im \frac{1}{B_{\mu}}\left(-iy^{-1}\right) \qquad \text{ as } y\to\infty$$
\end{enumerate}
If any of the above statements holds, we also have, as $z \rightarrow 0$ n.t., 
\begin{equation*} \label{ccor1.3a}
z \ll z{\frac{1}{B_{\mu}}(z)};
\end{equation*}

and as $y \rightarrow \infty$ we have,
\begin{align} \label{ccor1.3b}
-y^{-1}\Re \frac{1}{B_{\mu}}(-iy^{-1}) &\sim -\frac{\pi\left(1-\alpha\right)/2}{\cos\left(\pi\alpha/2\right)}\mu\left(y,\infty\right) \gg y^{-1}
\end{align}
and
\begin{equation} \label{ccor1.3c}
y^{-1}\Im \frac{1}{B_{\mu}}(-iy^{-1}) \sim -d_{\alpha}\mu\left(y,\infty\right) \gg y^{-1}
\end{equation}
where
\begin{equation*} \label{dalpha}
d_{\alpha}=
\begin{cases}
\frac{\pi\left(2-\alpha\right)/2}{\sin\left(\pi\alpha/2\right)}, & \text{when }\alpha>0,\\
1,                                         & \text{when }\alpha=0.
\end{cases}
\end{equation*}
\end{theorem}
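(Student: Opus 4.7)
The plan follows the template set out for Theorems \ref{ccor1.1} and \ref{ccor1.2}: express $1/B_\mu$ through the chain $G_\mu \to \Psi_\mu \to \eta_\mu \to 1/B_\mu$ and then invoke the Tauberian-type results from \cite{hazra1}, which already match the regular variation of $\mu$ with the asymptotic behaviour of the Cauchy remainder $r_{G_\mu}(iy)$ near zero. The distinguishing feature of the present $p = 0$ case is that no moments are subtracted, so $r_{\Psi_\mu}(z) = \Psi_\mu(z)$ and crucially $\Psi_\mu(z) \to 0$ as $z \to 0$ non-tangentially; this will simplify the passage $\Psi_\mu \mapsto \eta_\mu$, at the cost of making $\frac{1}{B_\mu}$ itself diverge at the origin.

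First I would observe that the identity $\eta_\mu = \Psi_\mu/(1 + \Psi_\mu)$ together with $\Psi_\mu(z) \to 0$ yields
\[
\eta_\mu(z) = \Psi_\mu(z)\bigl(1 + O(\Psi_\mu(z))\bigr) \sim \Psi_\mu(z) \qquad \text{as } z \to 0 \text{ n.t.,}
\]
and in particular $\Re \eta_\mu(z) \sim \Re \Psi_\mu(z)$ and $\Im \eta_\mu(z) \sim \Im \Psi_\mu(z)$. Next, the identity $\frac{1}{B_\mu}(z) = \eta_\mu(z)/z$ evaluated at $z = -iy^{-1}$ reads $\frac{1}{B_\mu}(-iy^{-1}) = iy\,\eta_\mu(-iy^{-1})$; writing $\eta_\mu(-iy^{-1}) = a(y) + i b(y)$, multiplication by $iy$ produces $-yb + iya$, giving the key conversion rules
\[
-y^{-1}\Re\tfrac{1}{B_\mu}(-iy^{-1}) = \Im \eta_\mu(-iy^{-1}), \qquad y^{-1}\Im\tfrac{1}{B_\mu}(-iy^{-1}) = \Re \eta_\mu(-iy^{-1}).
\]
Combining with $\eta_\mu \sim \Psi_\mu$ and \eqref{GPsi}, the theorem reduces to comparing $\Re r_{G_\mu}(iy)$ and $\Im r_{G_\mu}(iy)$ with $\mu(y,\infty)$, which is precisely the content of the corresponding Tauberian theorem in \cite{hazra1}.

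The forward direction (1) $\Rightarrow$ (2) then follows by inserting the known Cauchy-remainder asymptotics $\Im r_{G_\mu}(iy) \sim -\frac{\pi(1-\alpha)/2}{\cos(\pi\alpha/2)}\,\mu(y,\infty)$ and $\Re r_{G_\mu}(iy) \sim -d_\alpha\,\mu(y,\infty)$ into the identities above, producing \eqref{ccor1.3b} and \eqref{ccor1.3c}. The auxiliary claim $z \ll z\cdot\tfrac{1}{B_\mu}(z) = \eta_\mu(z)$ then follows immediately, since these asymptotics show that $|\eta_\mu(-iy^{-1})|$ is of order $\mu(y,\infty) \gg y^{-1}$ (recall $\mu \in \mathcal{M}_0$ and $\alpha < 1$, so $\mu$ has no finite first moment). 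For the converse (2) $\Rightarrow$ (1), hypothesis (2) forces $\eta_\mu(-iy^{-1}) \to 0$ much more slowly than $y^{-1}$, hence $\Psi_\mu = \eta_\mu/(1 - \eta_\mu) \sim \eta_\mu$; running the same identities in reverse recovers the asymptotic behaviour of $\Re r_{G_\mu}(iy)$ and $\Im r_{G_\mu}(iy)$, and the converse implication in \cite{hazra1} delivers regular variation of $\mu$.

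The most delicate point I anticipate is the boundary case $\alpha = 0$, where $\sin(\pi\alpha/2) = 0$ so that the stable-law-type formula for $\Re r_{G_\mu}(iy)$ degenerates. The correct value $d_0 = 1$ is not obtained as a limit of the $\alpha > 0$ formula but via a direct Karamata estimate of the integral $\int t^2/(y^2+t^2)\,d\mu(t)$ against a slowly varying tail. The argument must therefore split cleanly into the two sub-cases, which is reflected in the piecewise definition of $d_\alpha$ in the statement; verifying the correct matching of constants in the $\alpha = 0$ regime is the step where routine Tauberian machinery gives way to genuine case analysis.
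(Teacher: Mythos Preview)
Your proposal is correct and follows essentially the same route as the paper: invoke the $\Psi_\mu$-level Tauberian result (restated in the paper as Theorem~\ref{cor1.3}, coming from \cite{hazra1}), pass to $\eta_\mu$ via $\eta_\mu=\Psi_\mu/(1+\Psi_\mu)$, and then read off $1/B_\mu$ from $\frac{1}{B_\mu}(-iy^{-1})=iy\,\eta_\mu(-iy^{-1})$; the paper packages the middle step as Proposition~\ref{etatrans} (case $Z_{1,0}$), while you do it directly. One small point: the inference ``$\eta_\mu\sim\Psi_\mu$, and in particular $\Re\eta_\mu\sim\Re\Psi_\mu$, $\Im\eta_\mu\sim\Im\Psi_\mu$'' is not automatic for complex asymptotics---it uses the extra input $\Re\Psi_\mu(-iy^{-1})\approx\Im\Psi_\mu(-iy^{-1})$ (so that the $O(\Psi_\mu^2)$ error is negligible against each part separately), which is exactly what the paper isolates as condition~(R3) and checks explicitly in the proof of Proposition~\ref{etatrans}.
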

In the fourth case we consider $\alpha = 1$ and $p= 0$.
\begin{theorem} \label{ccor1.4}
Let $\mu$ be in $\mathcal{M}_{0}$ and $\alpha = 1, r \in \left(0,1/2\right)$. The following are equivalent:
\begin{enumerate}
\item $\mu\left(y,\infty\right)$ is regularly varying of index $-1$.
\item $\Im \frac{1}{B_{\mu}}\left(-iy^{-1}\right)$ is slowly regularly varying and
\begin{equation} \label{ccor1.4c}
y^{-1} \ll -y^{-1}\Re \frac{1}{B_{\mu}}\left(-iy^{-1}\right) \ll y^{-\left(1-r/2\right)}.
\end{equation}
\end{enumerate}
If any of the above statements holds, we also have, as $z \rightarrow 0$ n.t., 
\begin{equation*} \label{ccor1.4a}
z \ll  z\frac{1}{B_{\mu}}\left(z\right);
\end{equation*}
and as $y \rightarrow \infty$ we have,
\begin{align} \label{ccor1.4b}
y^{-(1+r/2)} \ll  y^{-1}\Im \frac{1}{B_{\mu}}\left(-iy^{-1}\right) &\sim -\frac{\pi}{2}\mu\left(y,\infty\right) \ll y^{-1+r/2}
\end{align}
\end{theorem}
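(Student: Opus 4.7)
\textbf{Proof plan for Theorem \ref{ccor1.4}.} The strategy mirrors the pattern of the preceding three theorems: reduce the statement about $1/B_\mu$ to the already known analogue for the Cauchy transform $G_\mu$ from \cite{hazra1}, by chaining the algebraic identities \eqref{psiG}, \eqref{etapsi}, and \eqref{Bsame}. The novelty compared to the earlier cases is that $\alpha=1$ is the critical exponent at which the Hilbert-transform-type real part of $G_\mu(iy)$ is no longer pinned down sharply by the tail, which forces the two-sided polynomial bounds involving $r \in (0,1/2)$ rather than an exact asymptotic.

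The first step is the algebraic dictionary. Using $1/(-iy^{-1}) = iy$ and \eqref{Bsame}, one has $\tfrac{1}{B_\mu}(-iy^{-1}) = iy\,\eta_\mu(-iy^{-1})$, so $\Re\tfrac{1}{B_\mu}(-iy^{-1}) = -y\,\Im\eta_\mu(-iy^{-1})$ and $\Im\tfrac{1}{B_\mu}(-iy^{-1}) = y\,\Re\eta_\mu(-iy^{-1})$. Since $\mu$ is a probability measure, $\Psi_\mu(z) \to 0$ as $z \to 0$ n.t. by dominated convergence, so \eqref{etapsi} yields $\eta_\mu(z) = \Psi_\mu(z)(1 - \Psi_\mu(z) + O(\Psi_\mu(z)^2))$; the error $\Psi_\mu^2$ is checked to be of strictly smaller order than the quantities being extracted (this verification is the one place where the bound $-\Re\Psi_\mu(-iy^{-1}) \ll y^{-1+r/2}$ is used essentially), and one concludes $\Re\eta_\mu \sim \Re\Psi_\mu$ and $\Im\eta_\mu \sim \Im\Psi_\mu$ at $z = -iy^{-1}$. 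Then \eqref{psiG} gives
\begin{equation*}
\Re\Psi_\mu(-iy^{-1}) = -\int_0^\infty \frac{t^2}{y^2+t^2}\,d\mu(t), \qquad
\Im\Psi_\mu(-iy^{-1}) = -\int_0^\infty \frac{yt}{y^2+t^2}\,d\mu(t),
\end{equation*}
which are exactly (up to sign and the factor of $y$) the real and imaginary parts of $r_{G_\mu}(iy)$ via \eqref{real}. Invoking the $G$-transform analogue of Theorem \ref{ccor1.4} proved in \cite{hazra1} for the critical case $\alpha = 1$, $p = 0$, one obtains equivalence with regular variation of $\mu$ of index $-1$, together with $\Im\Psi_\mu(-iy^{-1}) \sim -\tfrac{\pi}{2}y\mu(y,\infty)$ and the bracketing $y^{-1} \ll -\Re\Psi_\mu(-iy^{-1}) \ll y^{-1+r/2}$. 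Multiplying these by $\pm y$ via the dictionary above delivers \eqref{ccor1.4b} and \eqref{ccor1.4c}. The remaining assertion $z \ll z\cdot\tfrac{1}{B_\mu}(z)$ n.t.\ reduces to $1/B_\mu(z) \to \infty$ along $z = -iy^{-1}$; by the asymptotic for $\Im\tfrac{1}{B_\mu}(-iy^{-1})$ just derived, $\Im\tfrac{1}{B_\mu}(-iy^{-1}) \sim -\tfrac{\pi}{2}y\mu(y,\infty) = -\tfrac{\pi}{2}L(y)$ with $L$ slowly varying, while for $\mu \in \mathcal{M}_0$ of tail index $-1$ the function $y\mu(y,\infty)$ does not go to zero fast enough for the full modulus to decay — combined with the lower bound $-y^{-1}\Re\tfrac{1}{B_\mu}(-iy^{-1}) \gg y^{-1}$ this forces $1/B_\mu(-iy^{-1})$ to dominate $-iy^{-1}$ in modulus.

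The main obstacle is the borderline exponent $\alpha = 1$ itself: the integral $\int t/(y^2+t^2)\,d\mu(t)$ entering $\Re G_\mu(iy)$ behaves like a principal-value Hilbert transform on the verge of divergence, so the tail $\mu(y,\infty)$ alone cannot give a one-term asymptotic, only the bracketing by $y^{-1}$ from below and $y^{-(1-r/2)}$ from above. Passing this bracketing correctly through the nonlinear map $\Psi_\mu \mapsto \Psi_\mu/(1+\Psi_\mu)$ requires checking that the neglected $\Psi_\mu^2$ contribution is absorbed into the looser side of the inequality, which is the technically delicate part of the argument and the reason the parameter $r$ is unavoidable.
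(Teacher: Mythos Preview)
Your overall strategy is exactly the paper's: the paper proves Theorems \ref{ccor1.1}--\ref{ccor1.5} by quoting the $\Psi_\mu$-analogues (Theorems \ref{cor1.1}--\ref{cor1.5}, restated from \cite{hazra1} via \eqref{real}), passing from $\Psi_\mu$ to $\eta_\mu$ through Proposition \ref{etatrans} (your expansion $\eta_\mu=\Psi_\mu(1-\Psi_\mu+O(\Psi_\mu^2))$ is precisely the $Z_{2,0}$ case there), and then reading off $1/B_\mu$ from $\eta_\mu$ using the algebraic dictionary $\tfrac{1}{B_\mu}(-iy^{-1})=iy\,\eta_\mu(-iy^{-1})$.

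There is, however, a concrete error in the step where you invoke \cite{hazra1}. You assert
\[
\Im\Psi_\mu(-iy^{-1})\sim -\tfrac{\pi}{2}\,y\,\mu(y,\infty),\qquad y^{-1}\ll -\Re\Psi_\mu(-iy^{-1})\ll y^{-1+r/2},
\]
but this has the real and imaginary parts interchanged. With your own integral formulas, the Tauberian substitution $t=ys$ gives
\[
-\Re\Psi_\mu(-iy^{-1})=\int_0^\infty\frac{t^2}{y^2+t^2}\,d\mu(t)
=\int_0^\infty\overline F(ys)\,\frac{2s}{(1+s^2)^2}\,ds
\sim \mu(y,\infty)\int_0^\infty\frac{2}{(1+s^2)^2}\,ds=\tfrac{\pi}{2}\,\mu(y,\infty),
\]
a \emph{finite} limit integral, so $\Re\Psi_\mu(-iy^{-1})\sim -\tfrac{\pi}{2}\mu(y,\infty)$ is the sharp asymptotic. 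By contrast, $-\Im\Psi_\mu(-iy^{-1})=\int\frac{yt}{y^2+t^2}d\mu(t)$ leads, after the same substitution, to the kernel $\frac{1-s^2}{(1+s^2)^2}$ whose formal limit integral $\int_0^\infty s^{-1}\frac{1-s^2}{(1+s^2)^2}ds$ \emph{diverges} at $s=0$; this is exactly why at $\alpha=1$ one obtains only the bracketing $y^{-1}\ll -\Im\Psi_\mu(-iy^{-1})\ll y^{-(1-r/2)}$ (cf.\ the $p=0$ instance of Theorem \ref{cor1.5}/class $Z_{2,0}$), not a one-term asymptotic, and certainly not $\sim -\tfrac{\pi}{2}y\mu(y,\infty)$. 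Once you swap the roles back, your dictionary $\Im\tfrac{1}{B_\mu}=y\Re\eta_\mu\sim y\Re\Psi_\mu$ and $-y^{-1}\Re\tfrac{1}{B_\mu}=\Im\eta_\mu\sim\Im\Psi_\mu$ yields \eqref{ccor1.4b} and \eqref{ccor1.4c} correctly, and the rest of your outline goes through as in the paper.
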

Finally, we consider the case where $p \geqslant 1$ and $\alpha = p+1$.
\begin{theorem} \label{ccor1.5}
Let $\mu$ be in $\mathcal{M}_{p}$, $p \geqslant 1$ and $ \alpha = p+1, r \in \left(0,1/2\right)$. The following are equivalent:
\begin{enumerate}
\item $\mu\left(y,\infty\right)$ is regularly varying of index $-\left(p+1\right)$.
\item $\Re r_{\frac{1}{B_{\mu}}}\left(-iy^{-1}\right)$ is regularly varying of index $-1$ and
\begin{equation} \label{ccor1.5c}
y^{-1} \ll \Im r_{\frac{1}{B_{\mu}}}\left(-iy^{-1}\right) \ll y^{-\left(1-r/2\right)}.
\end{equation}
\end{enumerate}
If any of the above statements holds, we also have, as $z \rightarrow 0$ n.t., 
\begin{equation} \label{ccor1.5a}
z \ll r_{\frac{1}{B_{\mu}}}\left(z\right);
\end{equation}
As $y \rightarrow \infty$ we have,
\begin{align} \label{ccor1.5b}
y^{-(1+r/2)} \ll \Re r_{\frac{1}{B_{\mu}}}\left(-iy^{-1}\right) &\sim -\frac{\pi}{2}y^{p}\mu\left(y,\infty\right) \ll y^{-\left(1-r/2\right)}.
\end{align}
\end{theorem}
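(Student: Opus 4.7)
The plan is to reduce the theorem to the corresponding Tauberian-type statement for the Cauchy transform $r_{G_\mu}$ and then lift the conclusion through the chain of transforms developed in Section~\ref{sectionthree}, namely $G_\mu \rightsquigarrow \Psi_\mu \rightsquigarrow \eta_\mu = z/B_\mu$. Indeed, by \eqref{GPsi} and \eqref{real} any assertion about the real/imaginary parts of $r_{G_\mu}(iy)$ translates verbatim into one about $r_{\Psi_\mu}(-iy^{-1})$, while the identity $r_{1/B_\mu} = r_{\eta_\mu}$ for $p \geq 1$ combined with \eqref{eq:t} passes everything to the $1/B$-transform. Hence the task reduces to establishing the two-way correspondence between $\mu(y,\infty)$ being regularly varying of index $-(p+1)$ and the stated behavior of $r_{\eta_\mu}(-iy^{-1})$.

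I would first establish the Cauchy-side analogue of the theorem, adapting the framework of \cite{hazra1} to the moment boundary $\alpha = p+1$. The starting point is the identity
\[
r_{G_\mu}(iy) = \int_0^\infty \frac{t^{p+1}}{iy - t}\, d\mu(t),
\]
obtained by expanding $(iy-t)^{-1}$ as a partial geometric series with remainder. The forward (Abelian) direction splits the integral at $t = y$ and applies Karamata-type integration lemmas to the tail; at this exact moment boundary the principal-value (real) part retains the sharp asymptotic $\Re r_{G_\mu}(iy) \sim -\tfrac{\pi}{2} y^p \mu(y,\infty)$, whereas the imaginary part only admits the loose envelope $y^{-1} \ll \Im r_{G_\mu}(iy) \ll y^{-(1-r/2)}$, reflecting that a slowly varying correction is absorbed by the $\mathcal{O}(1/y)$ contributions from the bulk. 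The reverse (Tauberian) direction invokes Stieltjes inversion together with a de~Haan type argument to recover the tail of $\mu$ from the index $-1$ regular variation of $\Re r_{G_\mu}$ alone.

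For the second step I would transfer the $r_{\Psi_\mu}$ asymptotics to $r_{\eta_\mu}$. Since $\Psi_\mu(z) \to 0$ non-tangentially as $z \to 0$, the geometric expansion $\eta_\mu = \Psi_\mu/(1+\Psi_\mu) = \sum_{k \geq 1}(-1)^{k-1}\Psi_\mu^k$ converges non-tangentially near $0$, and subtracting Taylor polynomials of degree $p$ on both sides yields
\[
r_{\eta_\mu}(z) = r_{\Psi_\mu}(z) + z^{-p} \sum_{k \geq 2}(-1)^{k-1}\bigl(\Psi_\mu^k(z) - P_p[\Psi_\mu^k](z)\bigr),
\]
where $P_p$ denotes truncation to degree $p$. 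Each correction for $k \geq 2$ can be expressed as a polynomial in $m_1,\ldots,m_p$ and $r_{\Psi_\mu}(z)$ carrying at least one extra factor of $z$; combined with $|z| \ll |r_{\Psi_\mu}(z)|$ non-tangentially (the $\Psi$-side analogue of \eqref{ccor1.5a}), each such correction is $o(r_{\Psi_\mu}(z))$. Therefore $r_{\eta_\mu}(-iy^{-1}) = r_{\Psi_\mu}(-iy^{-1})(1+o(1))$, and the Step~1 asymptotics carry over to $r_{1/B_\mu}$ via \eqref{eq:t}.

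The main obstacle is the Tauberian half of Step~1. Unlike the interior case $\alpha \in (p, p+1)$ of Theorem~\ref{ccor1.1}, where both $\Re r_{G_\mu}$ and $\Im r_{G_\mu}$ are regularly varying of matching index and either one separately pins down $\mu(y,\infty)$, here $\Im r_{G_\mu}$ is only trapped between $y^{-1}$ and $y^{-(1-r/2)}$, so the slowly varying tail of $\mu$ must be extracted from $\Re r_{G_\mu}$ alone. This forces isolating the principal-value component of the Stieltjes inversion formula and exploiting the sharp index $-1$ regular variation of the real part to pin down $\mu(y,\infty) \sim -\tfrac{2}{\pi} y^{-p} \Re r_{G_\mu}(iy)$, echoing the $p = 0$ treatment behind Theorem~\ref{ccor1.4} but complicated by the presence of nonzero lower-order moments that must be carefully cancelled in the expansion.
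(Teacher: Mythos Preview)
Your overall route is the paper's: invoke the $\Psi$-side result (which the paper simply quotes from \cite{hazra1} as Theorem~\ref{cor1.5} rather than reproving), push it through $\eta_\mu = \Psi_\mu/(1+\Psi_\mu)$, and identify $r_{1/B_\mu} = r_{\eta_\mu}$ by \eqref{eq:t}. Your Step~1 discussion of the Tauberian half is thus extra work, not a point of divergence.

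There is, however, a real gap in Step~2. From $r_{\eta_\mu}(-iy^{-1}) = r_{\Psi_\mu}(-iy^{-1})(1+o(1))$ you cannot deduce that real and imaginary parts separately agree: writing the error as $E$ with $|E| = o(|r_{\Psi_\mu}|)$, the conclusion $\Re r_{\eta_\mu} \sim \Re r_{\Psi_\mu}$ would require $|r_{\Psi_\mu}|/|\Re r_{\Psi_\mu}|$, hence $\Im r_{\Psi_\mu}/\Re r_{\Psi_\mu}$, to stay bounded. At the boundary $\alpha = p+1$ this fails---by \eqref{cor1.5b}--\eqref{cor1.5c}, $\Im r_{\Psi_\mu}$ may reach $y^{-(1-r/2)}$ while $\Re r_{\Psi_\mu}$, regularly varying of index $-1$, may drop to $y^{-(1+r/2)}$, so the ratio can grow like $y^{r}$. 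This is exactly why the paper isolates the class $Z_{2,p}$ (Definition~\ref{defZ}, condition~\eqref{R3prime}) and treats it separately in Proposition~\ref{etatrans}, case~(4). The repair is to retain the structural information you already noticed---that each correction term carries an extra factor of $z$---rather than discarding it to $o(r_{\Psi_\mu})$: the paper keeps the sharper form $r_{\eta_\mu}(z) = r_{\Psi_\mu}(z) + c_1 z + O\bigl(z\, r_{\Psi_\mu}(z)\bigr)$, observes that at $z = -iy^{-1}$ the term $c_1 z$ is purely imaginary and drops from the real part, and then the surviving error $O\bigl(y^{-1}|r_{\Psi_\mu}|\bigr)$ is small against $\Re r_{\Psi_\mu}$ precisely because the extra $y^{-1}$ combined with the exponents in \eqref{R3prime} forces the ratio to vanish. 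The imaginary part is handled by collapsing to $r_{\eta_\mu}(z) = r_{\Psi_\mu}(z) + O(|z|)$ and invoking $\Im r_{\Psi_\mu} \gg y^{-1}$.
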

The proof of these theorems are deferred to Section~\ref{secsce}.
\section{Additive Boolean subexponentiality} \label{sectionfour}
To prove the Theorem~\ref{lastth} we need the following lemma.
\begin{lemma} \label{llema}
Suppose $\mu $ and $\nu $ are two probability measures in $[0,\infty)$
 with regularly varying tails of index $-\alpha$ and suppose $\nu(y,\infty) \sim c \mu(y,\infty)$ for some $c > 0$. Then 
\begin{align*}
\mu \uplus \nu\left(y,\infty\right) \sim \left(1+c\right)\mu\left(y,\infty\right) \text{    as   } y \rightarrow \infty.
\end{align*} 
\end{lemma}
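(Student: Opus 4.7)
The plan is to linearize additive Boolean convolution via the $1/B$-transform: combining \eqref{kgrelationhere}, \eqref{Kmu} and \eqref{addiKBmu} yields
\begin{equation*}
\tfrac{1}{B_{\mu\uplus\nu}}(z)\;=\;\tfrac{1}{B_\mu}(z)+\tfrac{1}{B_\nu}(z),
\end{equation*}
so once the Tauberian dictionary of Section \ref{sectionthree} is in hand, the proof reduces to a linear computation with the remainder terms on the imaginary axis.

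First I would fix the non-negative integer $p$ with $\alpha\in[p,p+1]$, so that $\mu,\nu\in\mathcal{M}_p$ and exactly one of Theorems \ref{ccor1.1}--\ref{ccor1.5} applies to each. That theorem yields asymptotics for the real and imaginary parts of $r_{1/B_\mu}(-iy^{-1})$ and $r_{1/B_\nu}(-iy^{-1})$ which are explicit constants times $y^p\mu(y,\infty)$ and $y^p\nu(y,\infty)$ respectively, with \emph{identical} constants. Since $\nu(y,\infty)\sim c\,\mu(y,\infty)$, I conclude
\begin{equation*}
r_{1/B_\nu}(-iy^{-1})\;\sim\;c\,r_{1/B_\mu}(-iy^{-1})\quad\text{as }y\to\infty,
\end{equation*}
read coordinatewise on real and imaginary parts (and, in the boundary regimes, interpreted as the same pinching between two powers of $y$ on the subdominant coordinate).

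Next I pass to $\mu\uplus\nu$. Since $K$ is additive under $\uplus$ and its coefficients are the Boolean cumulants, these cumulants add; the polynomial equivalence between moments and Boolean cumulants then yields $m_p(\mu\uplus\nu)<\infty$, while the non-finiteness of $m_{p+1}(\mu)$ and $m_{p+1}(\nu)$ forces $m_{p+1}(\mu\uplus\nu)=\infty$, and hence $\mu\uplus\nu\in\mathcal{M}_p$. Linearity of $1/B$ together with the matching of subtracted Taylor polynomials then gives
\begin{equation*}
r_{1/B_{\mu\uplus\nu}}(-iy^{-1})\;=\;r_{1/B_\mu}(-iy^{-1})+r_{1/B_\nu}(-iy^{-1})\;\sim\;(1+c)\,r_{1/B_\mu}(-iy^{-1}).
\end{equation*}

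Finally I apply the converse direction $(2)\Rightarrow(1)$ of the same theorem to $\mu\uplus\nu$: the sharp asymptotic for $r_{1/B_{\mu\uplus\nu}}(-iy^{-1})$ obtained above identifies $(\mu\uplus\nu)(y,\infty)\sim(1+c)\mu(y,\infty)$ by comparing the explicit constants. The main obstacle will be the bookkeeping across the five regimes, in particular the boundary cases $\alpha\in\{p,p+1\}$ of Theorems \ref{ccor1.2}, \ref{ccor1.4} and \ref{ccor1.5}, where only one of $\Re$ and $\Im$ of $r_{1/B_\mu}(-iy^{-1})$ carries a leading regularly varying asymptotic while the other is only squeezed between two $y$-powers as in \eqref{ccor1.4c} or \eqref{ccor1.5c}; I must verify that additivity preserves both the sharp asymptotic on the dominating side and the two-sided squeeze on the subdominant side, both of which follow by applying the bounds separately to $\mu$ and $\nu$.
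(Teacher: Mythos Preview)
Your proposal is correct and follows essentially the same approach as the paper: both exploit the additivity $K_{\mu\uplus\nu}=K_\mu+K_\nu$ (equivalently $\tfrac{1}{B_{\mu\uplus\nu}}=\tfrac{1}{B_\mu}+\tfrac{1}{B_\nu}$), apply the Tauberian Theorems~\ref{ccor1.1}--\ref{ccor1.5} to $\mu$ and $\nu$ separately to obtain matching asymptotics for the remainders, add them, and then invoke the converse implication for $\mu\uplus\nu$. The paper phrases everything via $r_{K_\mu}$ through the identification \eqref{KB} and only writes out the case $p<\alpha<p+1$ in detail, deferring the boundary regimes with the same remark you make; your justification of $\mu\uplus\nu\in\mathcal M_p$ via Boolean cumulants is a slight variant of the paper's ``by looking at the remainder term of $K_{\mu\uplus\nu}$'', but both rest on the moment--Taylor correspondence.
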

\begin{proof}
Depending on where the index $\alpha \geqslant 0$ lies, the proof can be split into five cases as described in Section~\ref{sectionthree}. We shall present the proof for the case when $p \geq 1$ with $\mu \in \mathcal{M}_p$ and $\alpha \in (p,p+1)$. We shall use Theorem~\ref{ccor1.1} to derive this case. The other cases can be dealt in exactly similar fashion using the four other results stated Section~\ref{sectionthree}. Using the relation \eqref{addiKBmu} we define the remainder term of the $K$-transform in the following obvious way:
\begin{equation}\label{KB}
r_{K_{\mu}} \left(\frac{1}{z}\right) = r_{\frac{1}{B_{\mu}}} (z).
\end{equation}

For  $\alpha \in \left(p,p+1\right)$ using Theorem~\eqref{ccor1.1} and taking imaginary and real parts of~\eqref{KB},  we have
\begin{equation}\label{eq:kmu_remainder}
\Im r_{K_{\mu}}(iy) \sim -\frac{\pi(p+1-\alpha)/2}{\cos(\pi(\alpha-p)/2)}y^{p}\mu(y,\infty) \text{ and } \Re r_{K_{\mu}}(iy) \sim -\frac{\pi(p+2-\alpha)/2}{\sin(\pi(\alpha-p)/2)}y^{p}\mu(y,\infty)
\end{equation}
respectively.

An analogous equation for measure $\nu$ can be derived with $\mu$ being replaced by $\nu$ in~\eqref{eq:kmu_remainder}.  Now the equation ~\eqref{Kmu} and the definition of the remainder term gives, $r_{K_{\mu \uplus \nu}}\left(z\right) = r_{K_{\mu}}\left(z\right) + r_{K_{\nu}}\left(z\right)$. Therefore 
\begin{align} \label{442}
\Im r_{K_{\mu \uplus \nu}}\left(iy\right) &\sim -\frac{\pi\left(p+1-\alpha\right)/2}{\cos\left(\pi\left(\alpha-p\right)/2\right)}\left(1 + c\right)y^{p}\mu\left(y,\infty\right)  \text{  as  } y \rightarrow \infty,\\
\Re r_{K_{\mu \uplus \nu}}\left(iy\right) &\sim -\frac{\pi\left(p+2-\alpha\right)/2}{\sin\left(\pi\left(\alpha-p\right)/2\right)}\left(1 + c\right)y^{p}\mu\left(y,\infty\right)  \text{  as  } y \rightarrow \infty, \nonumber
\end{align}
which are regularly varying of index $-\left(\alpha - p\right)$ with $\Im r_{K_{\mu \uplus \nu}}\left(iy\right) \approx \Re r_{K_{\mu \uplus \nu}}\left(iy\right)$ and we conclude $\mu \uplus \nu \in \mathcal{M}_{p}$ by looking at the remainder term of $K_{\mu \uplus \nu}$. Now again using Theorem ~\ref{ccor1.1} for the measure $\mu \uplus \nu$, we have
\begin{align} \label{443}
\Im r_{K_{\mu \uplus \nu}}\left(iy\right) &\sim -\frac{\pi\left(p+1-\alpha\right)/2}{\cos\left(\pi\left(\alpha-p\right)/2\right)}y^{p}\mu \uplus \nu\left(y,\infty\right)  \text{  as  } y \rightarrow \infty.
\end{align} 
Combining ~\eqref{442} and ~\eqref{443} the result follows.
\end{proof}

The proof of Theorem~\ref{lastth} is immediate using induction which we briefly indicate below. 

\textsc{proof of theorem}~\ref{lastth} : Let $\mu $ be regularly varying of tail index $-\alpha$ and supported on $[0,\infty)$. We prove 
\begin{align} \label{Booleanmu}
\mu^{\uplus{n}}\left(y,\infty\right) \sim  n\mu\left(y,\infty\right) \text{  as  } y \rightarrow \infty.
\end{align}
by induction on n. For $n=2$  ~\eqref{Booleanmu} follows from the Lemma~\ref{llema} with both the measures as $\mu$ and $c=1$. To prove ~\eqref{Booleanmu} for $n=m+1$ assuming $n=m$ we take $c=m$ and $\nu = \mu^{\uplus {n}}$ in Lemma~\ref{llema}.\qed

\subsection{Proof of Proposition~\ref{prop:oljboolean} and Theorem~\ref{theorem:application}}
 \begin{proof} [ Proof of Proposition~\ref{prop:oljboolean}]
 Using \eqref{themapX} and \eqref{themapXinv} we have for any $y \in [0, \infty)$,
 \begin{align*}
 F^{\maxbool n}(y) &= X^{-1}\big((X(F(y))^n\big) \\
 &= X^{-1}\Bigg(\exp\Big(n - \frac{n}{F(y)}\Big)\Bigg) \\
 &= \frac{1}{1 - \log\Bigg(\exp\Big(n - \frac{n}{F(y)}\Big)\Bigg)} \\
 &= \frac{F(y)}{n - (n-1)F(y)}.
\end{align*}  
Thus for any $y \geqslant 0$, $$\overline{F^{\maxbool n}}(y) = 1- F^{\maxbool n}(y) = 1- \frac{F(y)}{n - (n-1)F(y)} = \frac{n \overline{F}(y)}{1 + (n-1)\overline{F}(y)}.$$
Now noting the fact that $\overline{F}(y) \to 0$ as $y \to \infty$, we have as $y \to \infty$
\begin{equation}\label{needforpf}
\overline{F^{\maxbool n}}(y) \sim n \overline{F}(y) \sim \overline{F^{\uplus n}}(y).
\end{equation}
The last asymptotic follows by the definition of Boolean-subexponentiality.

The asymptic \eqref{moreoverstat} follows by combining \eqref{needforpf}, Proposition $1.1$ of \cite{hazra1}( in particular for any $n \in \mathbb{N}$, $\overline{F^{\boxv n}}(y) \sim n \overline{F}(y)$ as $y \to \infty$), Lemma $3.8$ of \cite{jessenmikosch} and the fact that regularly varying distributions are classical, free and Boolean subexponential.
\end{proof}
\begin{proof}[ Proof of Theorem~\ref{theorem:application}]
The proof is obvious for $t=0$. Initially we shall prove the result for integer points $t=n$, $n \in \mathbb{N}$. After that we shall extend the proof for any non-negative real number $t$. We start with letting $\mu$ to be regularly varying of tail index $-\alpha$. Again to keep the exposition simple we derive the result when $\alpha \in (p,p+1)$ for some $p\in \mathbb{N}$. In the other cases the result follows by similar argument using corresponding asymptotic relations.

We have from \eqref{applithzero},
$$K_{\textbf{B}_n(\mu)^{\uplus (1+n)}}(iy) = K_{\mu^{\boxplus(1+n)}}(iy).$$
Therefore using \eqref{Kmu} we can write the above expression as
\begin{align} \label{applithone}
(1+n)K_{\textbf{B}_n(\mu)}(iy) = K_{\mu^{\boxplus(1+n)}}(iy). 
\end{align}
Since $\mu$ has regularly varying tail of index $-\alpha$, from Theorem 1.1 of \cite{hazra1} we can conclude that $\mu$ is free subexponential, i.e. 
\begin{align} \label{appliththree}
\mu^{\boxplus(1+n)}(y,\infty) \sim (1+n)\mu(y,\infty) \text{   as   } y \to \infty,
\end{align}
which shows that the probability measure $\mu^{\boxplus(1+n)} \in \mathcal{M}_+$ is also regularly varying of index $-\alpha$ at infinity.
Therefore, using the relation between the remainder term of $K$ and $B$ transforms (see \eqref{KB}) and Theorem \ref{ccor1.1}  we get as $y \to \infty$,
\begin{align} \label{applithtwo}
\Im r_{K_{\mu^{\boxplus(1+n)}}}(iy) &\sim -\frac{\pi(p+1-\alpha)/2}{\cos(\pi(\alpha-p)/2)}y^{p}\mu^{\boxplus(1+n)}(y,\infty),
\end{align}
 which is regularly varying of index $-(\alpha-p)$. Now taking imaginary parts of the remainder terms on both sides of \eqref{applithone} and using \eqref{applithtwo}, we get
\begin{align*}
(1+n)\Im r_{K_{\textbf{B}_n(\mu)}}(iy) &= \Im r_{K_{\mu^{\boxplus(1+n)}}}(iy)\\
&\sim -\frac{\pi(p+1-\alpha)/2}{\cos(\pi(\alpha-p)/2)}y^{p}\mu^{\boxplus(1+n)}(y,\infty)\\
&\overset{\eqref{appliththree}}{\sim} -\frac{\pi(p+1-\alpha)/2}{\cos(\pi(\alpha-p)/2)}(1+n)y^{p}\mu(y,\infty),
\end{align*} 
Therefore 
\begin{align} \label{applithfour}
\Im r_{K_{\textbf{B}_n(\mu)}}(iy) \sim -\frac{\pi(p+1-\alpha)/2}{\cos(\pi(\alpha-p)/2)}y^{p}\mu(y,\infty),
\end{align}
which is again regularly varying of index $-(\alpha - p)$. Similar calculations by taking the real part in place of imaginary part gives $\Re r_{K_{\textbf{B}_n(\mu)}}(iy)$ is regularly varying of index $-(\alpha - p)$, in particular we shall then have $\Im r_{K_{\textbf{B}_n(\mu)}}(iy) \approx \Re r_{K_{\textbf{B}_n(\mu)}}(iy)$. Now the definition of both additive Boolean convolution and the map $\textbf{B}$ allows us to deduce that $\textbf{B}_n(\mu) \in \mathcal{M}_p$ if and only if $\mu \in \mathcal{M}_p$. Thus applying \eqref{ccor1.1b} we get $\textbf{B}_n(\mu)$ is regularly varying of index $-\alpha$ and 
\begin{align} \label{applithfive}
\Im r_{K_{\textbf{B}_n(\mu)}}(iy) \sim -\frac{\pi(p+1-\alpha)/2}{\cos(\pi(\alpha-p)/2)}y^{p}\textbf{B}_n(\mu)(y,\infty).
\end{align}
Hence from \eqref{applithfour} and \eqref{applithfive} we have as $y \to \infty$,
$$\mu(y,\infty) \sim \textbf{B}_n(\mu)(y,\infty).$$
Conversely, suppose that $\textbf{B}_n(\mu)$ is regularly varying of index $-\alpha$. Here also we further suppose that $\alpha \in (p,p+1)$ with $p \geqslant 0$. When $\alpha = p$ or $\alpha = p+1$ the conclusion can be made by using similar arguments and corresponding asymptotic relationships from \cite{hazra1}. Now using Theorem~\ref{lastth} we have, $\textbf{B}_n(\mu)$ is Boolean-subexponential, i.e.;
\begin{align} \label{applithsix}
\textbf{B}_n(\mu)^{\uplus(1+n)}(y,\infty) \sim (1+n)\textbf{B}_n(\mu)(y,\infty),
\end{align}
which also shows that $\textbf{B}_n(\mu)^{\uplus(1+n)}$ is regularly varying of tail index $-\alpha$.  Let us recall the Voiculescu transform of a measure $\mu$. It is known from \cite{bercovici1993free} that $F_{\mu} = 1/G_{\mu}$ has a left inverse $F_{\mu}^{-1}$ (defined on a suitable domain) and $\phi_{\mu}(z) = F_{\mu}^{-1}(z) - z$. For probability measures $\mu$ and $\nu$ one has $\phi_{\mu \boxplus \nu}(z) = \phi_{\mu}(z) + \phi_{\nu}(z)$ on an appropriate domain. The asymptotics of remainder of $\phi_{\mu}$ were derived in \cite{hazra1}. We can write from \eqref{applithzero},
\begin{align}\label{applithseven}
\Im r_{\phi_{\mu^{\boxplus(1+n)}}}(iy) = \Im r_{\phi_{\textbf{B}_n(\mu)^{\uplus(1+n)}}}(iy),
\end{align}
where $r_{\phi_{\mu}}(z)$ is the remainder term of the Voiculescu transform of $\mu$. Now using the fact that $\textbf{B}_n(\mu)^{\uplus(1+n)}$ is regularly varying of tail index $-\alpha$, we get by applying Theorem 2.1 of \cite{hazra1} $\Im r_{\phi_{\textbf{B}_n(\mu)^{\uplus(1+n)}}}(iy)$ is regularly varying of index $-(\alpha -p)$ and 
\begin{align} \label{applitheight}
\Im r_{\phi_{\textbf{B}_n(\mu)^{\uplus(1+n)}}}(iy) &\sim -\frac{\pi(p+1-\alpha)/2}{\cos(\pi(\alpha-p)/2)}y^{p}\textbf{B}_n(\mu)^{\uplus(1+n)}(y,\infty),\nonumber \\
&\overset{\eqref{applithsix}}\sim -\frac{\pi(p+1-\alpha)/2}{\cos(\pi(\alpha-p)/2)}y^{p}(1+n)\textbf{B}_n(\mu)(y,\infty).
\end{align}
Now combining \eqref{applithseven}, \eqref{applitheight} and using the fact that $r_{\phi_{\mu\boxplus\nu}}(z) = r_{\phi_{\mu}}(z) + r_{\phi_{\nu}}(z)$, we have
\begin{align}\label{applithnine}
\Im r_{\phi_{\mu}}(iy) \sim -\frac{\pi(p+1-\alpha)/2}{\cos(\pi(\alpha-p)/2)}y^{p}\textbf{B}_n(\mu)(y,\infty).
\end{align}
This shows that $\Im r_{\phi_{\mu}}(iy)$ is regularly varying of index $-(\alpha-p)$ and again applying Theorem 2.1 of \cite{hazra1}, we get
\begin{align}\label{applithten}
\Im r_{\phi_{\mu}}(iy) \sim -\frac{\pi(p+1-\alpha)/2}{\cos(\pi(\alpha-p)/2)}y^{p}\mu(y,\infty).
\end{align}
Combining \eqref{applithnine} and \eqref{applithten} it follows $\mu$ is regularly varying of tail index $-\alpha$ and as $y \to \infty$,
$$\mu(y,\infty) \sim \textbf{B}_n(\mu)(y,\infty).$$
Therefore we are done for the integer case. Further we recall the definitions of $\mu^{\boxplus t}$ and $\mu^{\uplus t}$ form \cite{belinicabool} (See also \cite{rezspeibooli} and \cite{atombelinschi} for more details):

For any $t \geqslant 1$ and $\mu \in \mathcal{M}_+$ there exists $\mu^{\boxplus t} \in \mathcal{M}_+$ satisfying $\phi_{\mu^{\boxplus t} }(z) = t \phi_{\mu}(z)$ and thus 
\begin{equation}\label{p1somikoron1}
r_{\phi_{\mu^{\boxplus t} }(z)} = t r_{\phi_{\mu}}(z)
\end{equation}
on a truncated angular domain (e.g. $z$ with $1/z \in \Delta_{\kappa,\delta}$ for some positive $\kappa$, $\delta$) where they are well defined. Also for any $t \geqslant 0$ and $\mu \in \mathcal{M}_+$ there exists $\mu^{\uplus t} \in \mathcal{M}_+$ such that $K_{\mu^{\uplus t} }(z) = t K_{\mu}(z)$ on the upper half plane. So 
\begin{equation}\label{p1somikoron2}
r_{K_{\mu^{\uplus t} }(z)} = t r_{K_{\mu}}(z).
\end{equation}

Now successive use of \eqref{p1somikoron1}, \cite{hazra1}[Theorem~2.1-2.4] according to suitable cases, the fact  $\mu \in \mathcal{M}_p$ implies $\mu^{\boxplus t} \in \mathcal{M}_p$ which follows from \cite{taylorseries}[Theorem~1.5] and the definition of $K$ transform, we can say that any probability measure $\mu$ with regularly varying tail of index $-\alpha$, $\alpha \geqslant 0$ is more than free subexponential, i.e.,

$1$) If $\mu$ is regularly varying of tail index $-\alpha$ then $\mu^{\boxplus t}$ is also so for all $t \geqslant 1$. In particular, as $y \to \infty$, we have $\mu^{\boxplus t}(y, \infty) \sim t \mu(y,\infty).$

Also using \eqref{p1somikoron2}, Theorem~\ref{ccor1.1}-\ref{ccor1.5} for respective cases and the fact that $\mu^{\uplus t} \in \mathcal{M}_p$ whenever $\mu \in \mathcal{M}_p$ we shall be able to conclude that

$2$) If $\mu$ is regularly varying of tail index $-\alpha$ then $\mu^{\uplus t}$ is also so for all $t \geqslant 0$. In particular, as $y \to \infty$, we have $\mu^{\uplus t}(y, \infty) \sim t \mu(y,\infty).$

Now the above facts and similar calculations done in the above proof for the integer case gives us the result for any non-negative real number $t$. 
\end{proof}
\section{Result on multiplicative Boolean convolution} \label{p1section5}
In this short section we will prove Theorem \ref{mth1} using the relation of $B$-transform with the multiplicative Boolean convolution. We begin by observing that when $\mu$ has $p$  moments and $\nu$ has $q$ moments, then the Boolean multiplicative convolution of $\mu$ and $\nu$ has exactly $p$ moments if $p \leqslant q$. 
\begin{lemma} \label{mnbowtie}
Suppose $p \leqslant q$, $\mu \in \mathcal{M}_{p}$ and $\nu \in \mathcal{M}_{q}$, then $\mu \mboli \nu \in \mathcal{M}_{p}.$
\end{lemma}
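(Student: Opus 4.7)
The plan is to linearise $\mboli$ through the $B$-transform. Since $B_{\mu\mboli\nu}=B_\mu B_\nu$ and $B_\mu(z)=z/\eta_\mu(z)$, the function $h_\mu(z):=\eta_\mu(z)/z=1/B_\mu(z)$ satisfies $h_{\mu\mboli\nu}=h_\mu\,h_\nu$ with $h_\mu(0)=m_1(\mu)>0$. Via \cite{taylorseries}[Theorem~1.5] together with \eqref{psiG}, having a finite $k$-th moment is equivalent to the n.t.\ expansion $\Psi_\mu(z)=\sum_{i=1}^k m_i(\mu)z^i+o(z^k)$; and since $\Psi_\mu(z)\to 0$ n.t., the geometric series $\eta_\mu=\Psi_\mu/(1+\Psi_\mu)$ transfers this expansion to $\eta_\mu$ at the same order $k$, hence to $h_\mu$ at order $k-1$. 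The entire argument now reduces to tracking the orders of these n.t.\ Taylor expansions at $z=0$.

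For the finiteness of $m_p(\mu\mboli\nu)$ I expand $h_\mu$ to order $p-1$ and $h_\nu$ to order $q-1\ge p-1$; in their product the cross-terms involving one of the $o$-remainders are $o(z^{p-1})$ because the polynomial parts are $O(1)$ near $0$, so $h_{\mu\mboli\nu}=h_\mu h_\nu$ has an n.t.\ expansion of order $p-1$. Multiplying by $z$ gives an order-$p$ expansion of $\eta_{\mu\mboli\nu}$; expanding $\Psi_{\mu\mboli\nu}=\sum_{k\ge 1}\eta_{\mu\mboli\nu}^k$ then yields an order-$p$ expansion of $\Psi_{\mu\mboli\nu}$ (for $k\ge 2$ each term is $O(z^k)$ with remainder already $o(z^p)$), and the converse direction of \cite{taylorseries}[Theorem~1.5] gives $m_p(\mu\mboli\nu)<\infty$.

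For the upper bound that $m_{p+1}(\mu\mboli\nu)=\infty$ I derive the identity
\[
m_{p+1}(\mu\mboli\nu)=m_1(\mu)\,m_{p+1}(\nu)+m_1(\nu)\,m_{p+1}(\mu)+Q\bigl(m_j(\mu),m_j(\nu):j\le p\bigr),
\]
for a universal polynomial $Q$. This is obtained by comparing $[z^{p+1}]$-coefficients on the two sides of $\Psi_{\mu\mboli\nu}=\eta_{\mu\mboli\nu}/(1-\eta_{\mu\mboli\nu})$, using the convolution rule for Boolean cumulants $\alpha_{p+1}^{\mu\mboli\nu}=\sum_{i+j=p+2}\alpha_i^\mu\,\alpha_j^\nu$ that comes from $\eta_{\mu\mboli\nu}(z)=\eta_\mu(z)\eta_\nu(z)/z$, and finally inverting the triangular relation $\alpha_k^{\,\cdot}=m_k(\cdot)+$ (polynomial in strictly lower moments). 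Every quantity on the right is non-negative, $m_1(\mu),m_1(\nu)>0$ by positivity of the first moments, and $m_{p+1}(\mu)=+\infty$ because $\mu\in\mathcal{M}_p$, so the right-hand side equals $+\infty$, forcing $m_{p+1}(\mu\mboli\nu)=+\infty$.

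The main obstacle is making the displayed identity rigorous when one or both of $m_{p+1}(\mu),m_{p+1}(\nu)$ is infinite, because its derivation is at the level of formal power series. I plan to address this by a truncation argument: let $\mu_N,\nu_N$ be the probability measures obtained by restricting $\mu,\nu$ to $[0,N]$ and renormalising, so that all their moments are finite; the identity then holds for $\mu_N\mboli\nu_N$ as an honest polynomial equation in real numbers; letting $N\to\infty$ and combining monotone convergence of each individual moment with the weak continuity of $\mboli$ on compactly-supported probability measures (which follows from continuity of the $B$-transform on such measures) lets me pass to the limit and conclude.
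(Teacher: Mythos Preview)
Your treatment of the finiteness direction $m_p(\mu\mboli\nu)<\infty$ is correct and coincides with the paper's: both multiply the order-$(p-1)$ and order-$(q-1)$ n.t.\ expansions of $h_\mu=1/B_\mu$ and $h_\nu=1/B_\nu$, obtain an order-$(p-1)$ expansion for the product, and invoke \cite{taylorseries}[Theorem~1.5].

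The infiniteness direction $m_{p+1}(\mu\mboli\nu)=\infty$ has a genuine gap. A minor point first: the assertion that ``every quantity on the right is non-negative'' is false. Already for $p=1$ one computes
\[
Q=m_1(\mu)\,m_1(\nu)\bigl(m_1(\mu)m_1(\nu)-m_1(\mu)-m_1(\nu)\bigr),
\]
which is $-1$ when $m_1(\mu)=m_1(\nu)=1$. This by itself is not fatal, since $Q_N$ stays bounded while $m_1(\nu_N)\,m_{p+1}(\mu_N)\to\infty$. The real obstruction is the passage to the limit. Your scheme yields $m_{p+1}(\mu_N\mboli\nu_N)\to\infty$; to conclude $m_{p+1}(\mu\mboli\nu)=\infty$ you would need
\[
\limsup_{N}\, m_{p+1}(\mu_N\mboli\nu_N)\ \le\ m_{p+1}(\mu\mboli\nu),
\]
i.e.\ upper semicontinuity of the $(p{+}1)$-st moment along the sequence. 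Weak convergence only delivers the Fatou inequality $m_{p+1}(\mu\mboli\nu)\le\liminf_N m_{p+1}(\mu_N\mboli\nu_N)$, which points the wrong way and is vacuous here (cf.\ $\rho_N=(1-N^{-1})\delta_0+N^{-1}\delta_N\Rightarrow\rho_N\to\delta_0$ weakly, yet $m_2(\rho_N)=N\to\infty$). No stochastic-domination relation between $\mu_N\mboli\nu_N$ and $\mu\mboli\nu$ is available to repair this, and the weak continuity of $\mboli$ you invoke is itself not justified.

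The paper bypasses all of this. It works entirely at the level of the $1/B$-transform and uses the two-sided equivalence that $\rho\in\mathcal M_p$ if and only if $1/B_\rho$ admits an n.t.\ Taylor expansion of order $p-1$ (and not of order $p$). The product $f_1 f_2$ with $f_1(z)=1+c_1z+\cdots+c_{p-1}z^{p-1}+z^{p-1}r_{f_1}(z)$ and $f_2$ expanded to order $q-1$ then visibly has an expansion of order $p-1$; and since $f_2(0)=1\ne 0$, an order-$p$ expansion of $f_1f_2$ would, upon dividing by $f_2$, force an order-$p$ expansion of $f_1$, contradicting $\mu\in\mathcal M_p$. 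This contrapositive via the transform is both shorter and avoids any limit interchange. If you want to keep your moment-identity viewpoint, the clean fix is exactly this: assume $m_{p+1}(\mu\mboli\nu)<\infty$, so $h_\mu h_\nu$ has an order-$p$ expansion, divide by $h_\nu$ (nonvanishing at $0$), and contradict $\mu\in\mathcal M_p$ directly---no truncation needed.
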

\begin{proof}
Note that when $\mu$ has infinite mean the result is obvious. Now suppose $\mu$ and $\nu$ both have finite mean then from the definition of the remainder terms we can write the $B$-transforms of $\mu$ and $\nu$ in the following way
\begin{align*}
f_{1}\left(z\right) := \frac{1}{m(\mu)B_{\mu}}\left(z\right) &= 1 + c_{1}z + \cdots + c_{p-1}z^{p-1} + z^{p-1}r_{f_{1}}\left(z\right) \\
\end{align*} where $c_{i}, i = 1,2, \cdots ,p-1 $ are real constants and
\begin{align*}
f_{2}\left(z\right) := \frac{1}{m(\nu)B_{\nu}}\left(z\right) = 1 + d_{1}z + \cdots +d_{p-1}z^{p-1}
+ \cdots + d_{q-1}z^{q-1} + z^{q-1}r_{f_{2}}\left(z\right)
\end{align*}
where $ d_{j}, j = 1,2, \cdots q-1$ are also real constants. Taking the product of $f_{1}\left(z\right)$ and   $f_{2}\left(z\right)$ we see that $\frac{1}{B}$-transform of $\mu \mboli \nu$ has a Taylor series expansion of order $p-1$ by \eqref{Bmuandnu}.  Now it is easy to see that $\mu \in \mathcal{M}_{p}$ is equivalent to $\frac{1}{B_{\mu}}(z)$ having a Taylor series expansion of order $p-1$ (see Theorem 1.5 of \cite{taylorseries}). Therefore we see that $\mu \mboli \nu \in \mathcal{M}_{p}$.
\end{proof}
Following equation \eqref{Bmuandnu} and using the results in Section \ref{sectionthree} we shall derive the relation between the real or imaginary parts of the remainder terms of the product of two $B$-transforms. The Theorem is split into several cases depending on the existence of integer moments of the two measures involved.  
\begin{theorem} \label{Bmuz} 
Suppose $\alpha \leqslant \beta $ and let $\mu$ and $\nu$ be regularly varying with indices $-\alpha$ and $-\beta$ respectively. So there exists a non-negative integer $p$ such that $\alpha \in [p,p+1]$ and $\mu \in \mathcal{M}_{p}$. We also suppose that $\nu$ has finite first moment.\footnote{this is needed to define multiplicative Boolean convolution} Then we have the following:
\begin{enumerate}
\item Suppose $\alpha < \beta$. 
\begin{enumerate}
\item \label{theoremoa}If $1 \leqslant p < \alpha < p+1$ or $\alpha \in (0,1)$, then as $y \to \infty$ 
\begin{align*}
\Im r_{\frac{1}{B_{\mu}B_{\nu}}}\left(-iy^{-1}\right) &\sim m\left(\nu\right)\Im r_{\frac{1}{B_{\mu}}}\left(-iy^{-1}\right) \text {   and   }\\
\Re r_{\frac{1}{B_{\mu}B_{\nu}}}\left(-iy^{-1}\right) &\sim m\left(\nu\right)\Re r_{\frac{1}{B_{\mu}}}\left(-iy^{-1}\right).
\end{align*}
\item \label{theoremob} If $p \geqslant 1$, $\alpha = p$, then as $y \to \infty$ 
\begin{align*}
\Im r_{\frac{1}{B_{\mu}B_{\nu}}}\left(-iy^{-1}\right) &\sim m\left(\nu\right)\Im r_{\frac{1}{B_{\mu}}}\left(-iy^{-1}\right) \text {   and   }\\
\Re r_{\frac{1}{B_{\mu}B_{\nu}}}\left(-iy^{-1}\right) &\gg y^{-1}.
\end{align*}
\item \label{theoremoc}If $p=0$, $\alpha = 1$, $r \in (0,1/2)$, then as $y \to \infty$ 
\begin{align*}
\Im r_{\frac{1}{B_{\mu}B_{\nu}}}\left(-iy^{-1}\right) \sim m\left(\nu\right)\Im r_{\frac{1}{B_{\mu}}}\left(-iy^{-1}\right) \text {   and   }\\
y^{-1} \ll -y^{-1}\Re r_{\frac{1}{B_{\mu}B_{\nu}}}\left(-iy^{-1}\right) \ll y^{-(1-r/2)}.
\end{align*} 
\item \label{theoremod}If $p \geqslant 1$, $\alpha = p+1$, $r \in (0,1/2)$, then as $y \to \infty$ 
\begin{align*}
\Re r_{\frac{1}{B_{\mu}B_{\nu}}}\left(-iy^{-1}\right) \sim m\left(\nu\right)\Re r_{\frac{1}{B_{\mu}}}\left(-iy^{-1}\right) \text {   and   }\\
y^{-1} \ll \Im r_{\frac{1}{B_{\mu}B_{\nu}}}\left(-iy^{-1}\right) \ll y^{-(1-r/2)}.
\end{align*}
\end{enumerate}
\item \label{theoremoe}Suppose $\alpha = \beta$ and there exists some $c \in (0,\infty)$ such that $\nu(x,\infty) \sim c\mu(x,\infty)$. Then \eqref{theoremoa}(with only $p \geqslant1$), \eqref{theoremob} and \eqref{theoremod} holds with $m(\nu)$ is replaced by $(1+c)m(\nu)$ at each places.
\end{enumerate}
\end{theorem}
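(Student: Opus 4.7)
My plan is to exploit the factorization $\frac{1}{B_{\mu \mboli \nu}}(z) = \frac{1}{B_\mu(z)} \cdot \frac{1}{B_\nu(z)}$ coming from \eqref{Bmuandnu}, and to read off the remainder of the product by Taylor expanding each factor around $z = 0$. By Lemma \ref{mnbowtie} we already know $\mu \mboli \nu \in \mathcal{M}_p$, so $r_{\frac{1}{B_{\mu \mboli \nu}}}$ is anchored at the same order $p-1$ as $r_{\frac{1}{B_\mu}}$; this compatibility is exactly what makes a term-by-term comparison meaningful.

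Let $q$ denote the integer with $\nu \in \mathcal{M}_q$; the hypothesis $\alpha \leqslant \beta$ forces $q \geqslant p$. Using the expansions already produced in the proof of Lemma \ref{mnbowtie}, namely
\begin{align*}
\tfrac{1}{B_\mu}(z) = P_\mu(z) + z^{p-1} r_{\frac{1}{B_\mu}}(z), \qquad \tfrac{1}{B_\nu}(z) = Q_\nu(z) + z^{q-1} r_{\frac{1}{B_\nu}}(z),
\end{align*}
with polynomials $P_\mu$, $Q_\nu$ of degrees $p-1$, $q-1$ and constant terms $m(\mu)$, $m(\nu)$ respectively, multiplication gives
\begin{align*}
\tfrac{1}{B_{\mu \mboli \nu}}(z) = P_\mu(z) Q_\nu(z) + Q_\nu(z) z^{p-1} r_{\frac{1}{B_\mu}}(z) + P_\mu(z) z^{q-1} r_{\frac{1}{B_\nu}}(z) + z^{p+q-2} r_{\frac{1}{B_\mu}}(z) r_{\frac{1}{B_\nu}}(z).
\end{align*}
The monomials in $P_\mu Q_\nu$ of degree $\leqslant p-1$ are by construction the polynomial part of $\tfrac{1}{B_{\mu \mboli \nu}}$; the surplus monomials in $P_\mu Q_\nu$ contribute only $O(z)$ after dividing by $z^{p-1}$, and they are negligible because each relevant one of Theorems \ref{ccor1.1}--\ref{ccor1.5} provides $z \ll r_{\frac{1}{B_\mu}}(z)$ as $z \to 0$ n.t. (with the convention $r_{\frac{1}{B_\mu}}(z) = \frac{1}{B_\mu}(z)$ when $p = 0$).

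After dividing by $z^{p-1}$, the dominant surviving contribution is $Q_\nu(z) r_{\frac{1}{B_\mu}}(z) \sim m(\nu) r_{\frac{1}{B_\mu}}(z)$. The second piece becomes $P_\mu(z) z^{q-p} r_{\frac{1}{B_\nu}}(z)$: when $q > p$, which is forced whenever $\alpha < \beta$, the prefactor $z^{q-p}$ makes it $O(z)$, hence negligible against $r_{\frac{1}{B_\mu}}(z)$; when $q = p$, which is forced by $\alpha = \beta$, it contributes a genuine $m(\mu) r_{\frac{1}{B_\nu}}(z)$, and the hypothesis $\nu(y,\infty) \sim c\mu(y,\infty)$ combined with the explicit asymptotics in Theorems \ref{ccor1.1}, \ref{ccor1.2}, \ref{ccor1.5} yields $r_{\frac{1}{B_\nu}}(z) \sim c \, r_{\frac{1}{B_\mu}}(z)$, producing the enhanced constant recorded in item (2). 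The cross term $z^{q-1} r_{\frac{1}{B_\mu}}(z) r_{\frac{1}{B_\nu}}(z)$ is manifestly of lower order. Taking real and imaginary parts and feeding the outcome into the appropriate one of Theorems \ref{ccor1.1}--\ref{ccor1.5} for the value of $\alpha$ delivers the four subcases (a)--(d).

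The main obstacle is cases (c) and (d), where one of the real or imaginary parts is only known via a two-sided sandwich $y^{-(1+r/2)} \ll \cdot \ll y^{-(1-r/2)}$ rather than a clean regular-variation asymptotic. Propagating such a sandwich through the multiplication requires verifying that the perturbation terms $P_\mu(z) z^{q-1} r_{\frac{1}{B_\nu}}(z)$ and $z^{p+q-2} r_{\frac{1}{B_\mu}}(z) r_{\frac{1}{B_\nu}}(z)$ lie safely inside the sandwich window; this is essentially bookkeeping, but needs care because the available bounds are weaker than in the regularly varying cases and one cannot rely on $\approx$-type comparisons.
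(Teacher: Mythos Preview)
Your overall strategy---multiply the two Taylor expansions of $1/B_\mu$ and $1/B_\nu$, identify the order-$(p-1)$ remainder of the product, and compare it to $r_{1/B_\mu}$---is exactly what the paper does. However, there is a genuine gap in the argument as written.

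The claim ``$q>p$, which is forced whenever $\alpha<\beta$'' is false. Take for instance $p=q=1$ with $\alpha=1.2$ and $\beta=1.8$: both $\mu,\nu\in\mathcal M_1$ and $\alpha<\beta$. More generally, whenever $p<\alpha<\beta\leqslant p+1$ one has $p=q$, so the term $P_\mu(z)\,z^{q-p}\,r_{1/B_\nu}(z)$ carries no helpful power of $z$. In these subcases the $r_{1/B_\nu}$ contribution is \emph{not} $O(z)$; it must be shown to be $o\bigl(r_{1/B_\mu}\bigr)$ in the relevant real and imaginary parts by comparing the regular-variation indices directly, using the explicit asymptotics of Theorems~\ref{ccor1.1}--\ref{ccor1.5} (e.g.\ $\Im r_{1/B_\nu}(-iy^{-1})\sim C\,y^{p}\nu(y,\infty)$ has a strictly faster decay than the analogous expression for $\mu$). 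The paper devotes several subcases to exactly this configuration (including the mixed case $\alpha\in(p,p+1)$, $\beta=p+1$, where one remainder has sandwich bounds and the other a clean asymptotic), and the argument is not just bookkeeping: one has to choose $r\in(0,1/2)$ appropriately so that the bounds mesh.

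A second, more minor, point: passing from $r_{1/B_{\mu\mboli\nu}}(z)\sim m(\nu)\,r_{1/B_\mu}(z)+(\text{error})$ to the separate statements about real and imaginary parts is not automatic. An error that is $o(|r_{1/B_\mu}|)$ is only $o(\Re r_{1/B_\mu})$ provided $\Re r_{1/B_\mu}\approx |r_{1/B_\mu}|$, which is exactly the $\approx$ hypothesis in Theorems~\ref{ccor1.1} and~\ref{ccor1.3}; in the boundary cases~\eqref{theoremob}--\eqref{theoremod} that comparison fails and the real and imaginary parts must be tracked separately from the outset, as the paper does. You flag this as ``bookkeeping,'' but the paper's case analysis shows it is where most of the work actually lies.
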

We shall provide a detailed proof of this result in Section~\ref{secsce}.
We prove the main Theorem \ref{mth1} for multiplicative Boolean convolution using the above result.

\textbf{Proof of Theorem ~\ref{mth1}:} Suppose $\mu \in \mathcal{M}_{p}$, $1 \leqslant p < \alpha < p+1$ and $\nu \in \mathcal{M}_{q}$, $q \geqslant p$ with $\alpha < \beta$ then using \eqref{Bmuandnu}
and \eqref{theoremoa} of Theorem~\ref{Bmuz} we have as $y \to \infty$,
\begin{align*}
\Im r_{\frac{1}{B_{\mu\mboli\nu}}}\left(-iy^{-1}\right)  \sim m\left(\nu\right)\Im r_{\frac{1}{B_{\mu}}}\left(-iy^{-1}\right) \text{    and   }
\Re r_{\frac{1}{B_{\mu\mboli\nu}}}\left(-iy^{-1}\right)  \sim m\left(\nu\right)\Re r_{\frac{1}{B_{\mu}}}\left(-iy^{-1}\right).
\end{align*}
Therefore using \eqref{ccor1.1b}, \eqref{ccor1.1c} and above asymptotics, we get 
\begin{align} \label{435}
\Im r_{\frac{1}{B_{\mu\mboli\nu}}}\left(-iy^{-1}\right) &\sim -\frac{\pi\left(p+1-\alpha\right)/2}{\cos\left(\pi\left(\alpha-p\right)/2\right)}y^{p}m\left(\nu\right)\mu\left(y,\infty\right) \text{    and}\\
\Re r_{\frac{1}{B_{\mu\mboli\nu}}}\left(-iy^{-1}\right) &\sim -\frac{\pi\left(p+2-\alpha\right)/2}{\sin\left(\pi\left(\alpha-p\right)/2\right)}y^{p}m\left(\nu\right)\mu\left(y,\infty\right) \text{    as } y \to \infty \nonumber. 
\end{align}
So $\Im r_{\frac{1}{B_{\mu\mboli\nu}}}\left(-iy^{-1}\right) \approx \Re r_{\frac{1}{B_{\mu\mboli\nu}}}\left(-iy^{-1}\right)$ are both regularly varying $-\left(\alpha - p\right)$. By Lemma~\ref{mnbowtie}, we have $\mu \mboli \nu \in \mathcal{M}_{p}$ and therefore by applying the reverse implication of Theorem~\ref{ccor1.1} we get $\mu \mboli \nu$ is regularly varying of index $-\alpha$ and again using the asymptotic equivalence ~\eqref{ccor1.1b} of Theorem~\ref{ccor1.1} for the measure $\mu\mboli\nu$ we have, 
\begin{align} \label{436}
\Im r_{\frac{1}{B_{\mu\mboli\nu}}}\left(-iy^{-1}\right) \sim -\frac{\pi\left(p+1-\alpha\right)/2}{\cos\left(\pi\left(\alpha-p\right)/2\right)}y^{p}\mu \mboli \nu\left(y,\infty\right).
\end{align}
Hence from ~\eqref{435} and ~\eqref{436} we get $\mu \mboli \nu\left(y,\infty\right) \sim m\left(\nu\right)\mu\left(y,\infty\right)$. The other cases can be similarly dealt with using Theorem \ref{Bmuz} and the remaining four theorems in Section \ref{sectionthree}. We skip the details.
\begin{remark} \label{remark}
As we have mentioned in the beginning of Theorem \ref{mth1}, we have not dealt with the case when $\mu$ and $\nu$ both have the same regularly varying tail index but they are not tail balanced which can happen only in the case when $p \geqslant 1$, $\mu$ is in $\mathcal{M}_{p}$ and $\nu$ is in $\mathcal{M}_{p+1}$ but they are both regularly varying of tail index $-(p+1)$. In this case similar calculations will show that $\mu \mboli \nu\left(y,\infty\right) \sim m\left(\nu\right)\mu\left(y,\infty\right)$, i.e., the constant in the left of the asymptotic is $1$ instead of the form $1+c$.   
\end{remark}
\section{Proofs of Theorem \ref{ccor1.1} to \ref{ccor1.5} and Theorem \ref{Bmuz}}\label{secsce}
To keep the paper self contained we recall the following results from \cite{hazra1}[Theorem 2.1 - 2.4]. The results there gave the relation between $\mu$ and the remainder of Cauchy transform. We use the equation~\eqref{real} to rewrite them in terms of remainder of $\Psi_{\mu}$:

When p is a positive integer and $\alpha \in (p,p+1)$.
\begin{theorem} \label{cor1.1}
Let $\mu$ be in $\mathcal{M}_{p}, p \geqslant 1$ and $p < \alpha < p+1$. The following are equivalent:
\begin{enumerate}
\item $\mu\left(y,\infty\right)$ is regularly varying of index $-\alpha$.
\item $\Im r_{\Psi_{\mu}}\left(-iy^{-1}\right)$ is regularly varying of index $-\left(\alpha-p\right)$ and $$\Re r_{\Psi_{\mu}}\left(-iy^{-1}\right) \approx \Im r_{\Psi_{\mu}}\left(-iy^{-1}\right).$$
\end{enumerate}
If any of the above statements holds, we also have, as $z \rightarrow 0$ n.t., 
\begin{equation} \label{cor1.1a}
z \ll r_{\Psi_{\mu}}(z);
\end{equation}
as $y \rightarrow \infty$
\begin{align} \label{cor1.1b}
\Im r_{\Psi_{\mu}}\left(-iy^{-1}\right) &\sim -\frac{\pi\left(p+1-\alpha\right)/2}{\cos\left(\pi\left(\alpha-p\right)/2\right)}y^{p}\mu\left(y,\infty\right) \gg y^{-1}
\end{align}
and
\begin{align} \label{cor1.1c}
\Re r_{\Psi_{\mu}}\left(-iy^{-1}\right) &\sim -\frac{\pi\left(p+2-\alpha\right)/2}{\sin\left(\pi\left(\alpha-p\right)/2\right)}y^{p}\mu\left(y,\infty\right) \gg y^{-1}.
\end{align}
\end{theorem}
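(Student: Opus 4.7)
The plan is to obtain Theorem~\ref{cor1.1} as a direct translation of \cite{hazra1}[Theorem~2.1] for the Cauchy transform remainder $r_{G_\mu}$, using the identity $r_{G_\mu}(z)=r_{\Psi_\mu}(z^{-1})$ recorded in~\eqref{GPsi}. Since the statements about $\mu(y,\infty)$ do not involve any transform, the whole task reduces to relating the real and imaginary parts of $r_{\Psi_\mu}(-iy^{-1})$ (as $y\to\infty$) to those of $r_{G_\mu}(iy)$, and to relating the n.t.\ smallness condition at $0$ for $r_{\Psi_\mu}$ with the corresponding n.t.\ smallness condition at $\infty$ for $r_{G_\mu}$.

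For the asymptotic pieces, the substitution $z=iy$ (with $y>0$) gives $z^{-1}=-iy^{-1}$, so~\eqref{real} specializes to
\begin{equation*}
\Re r_{G_\mu}(iy)=\Re r_{\Psi_\mu}(-iy^{-1}),\qquad \Im r_{G_\mu}(iy)=\Im r_{\Psi_\mu}(-iy^{-1}).
\end{equation*}
Regular variation in $y\to\infty$ of either side then transfers to the other, the comparison $\Re\approx\Im$ transfers identically, and the explicit asymptotic formulas~\eqref{cor1.1b}-\eqref{cor1.1c} are read off from the corresponding formulas in \cite{hazra1}[Theorem~2.1] for $\Im r_{G_\mu}(iy)$ and $\Re r_{G_\mu}(iy)$. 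So both the forward implication $(1)\Rightarrow(2)$ and the converse $(2)\Rightarrow(1)$ reduce to the analogous implications in the Cauchy-transform setting.

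For the n.t.\ smallness~\eqref{cor1.1a}, I would argue as follows. The inversion $w\mapsto w^{-1}$ is a conformal bijection between $\mathbb{C}^-$ and $\mathbb{C}^+$ which maps each truncated cone $\Delta_{\kappa,\delta}\subset\mathbb{C}^-$ at $0$ onto an unbounded cone in $\mathbb{C}^+$ at $\infty$ whose aperture is controlled by $\kappa$ alone. Thus $w\to 0$ n.t.\ in the sense of the cones $\Delta_{\kappa,\delta}$ is equivalent to $w^{-1}\to\infty$ n.t.\ through the image cones, which is exactly the domain of non-tangential approach used in the corresponding Cauchy-transform statement of \cite{hazra1}. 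Using~\eqref{GPsi}, the condition $z^{-1}\ll r_{G_\mu}(z)$ as $z\to\infty$ n.t.\ from \cite{hazra1} translates, under $w=z^{-1}$, into $w\ll r_{\Psi_\mu}(w)$ as $w\to 0$ n.t., which is~\eqref{cor1.1a}.

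The one step that needs some care — and which I regard as the only real obstacle — is justifying the cone correspondence between n.t.\ approach to $0$ through $\Delta_{\kappa,\delta}$ and n.t.\ approach to $\infty$ in $\mathbb{C}^+$; everything else is pure notational book-keeping. This should be handled by a short direct computation: for $w\in\Delta_{\kappa,\delta}$ write $w=a+ib$ with $b<0$ and $|a|<-\kappa b$; then $w^{-1}=(a-ib)/|w|^2$ has positive imaginary part, and the bound $|a|<-\kappa b$ translates to a corresponding bound $|\Re w^{-1}|<\kappa\Im w^{-1}$, showing that $w^{-1}$ lies in the analogous upper-half-plane cone used by~\cite{hazra1}. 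Once this is checked, the theorem follows verbatim from Theorem~2.1 of \cite{hazra1} and the identities~\eqref{GPsi}-\eqref{real}.
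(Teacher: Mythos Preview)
Your proposal is correct and follows exactly the approach the paper takes: the paper does not give a separate proof of this theorem but simply states that it is a rewriting of \cite{hazra1}[Theorem~2.1] via the identity~\eqref{GPsi}/\eqref{real}. Your additional verification of the cone correspondence under $w\mapsto w^{-1}$ is a detail the paper leaves implicit, so if anything you are slightly more careful than the original.
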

When $p$ is a positive integer and $\alpha = p$.
\begin{theorem} \label{cor1.2}
Let $\mu$ be in $\mathcal{M}_{p}, p \geqslant 1$ and $\alpha = p$. The following are equivalent:
\begin{enumerate}
\item $\mu\left(y,\infty\right)$ is regularly varying of index $-p$.
\item $\Im r_{\Psi_{\mu}}\left(-iy^{-1}\right)$ is is slowly varying and
\begin{equation} \label{cor1.2c}
\Re r_{\Psi_{\mu}}\left(-iy^{-1}\right) \gg y^{-1}.
\end{equation}
\end{enumerate}
If any of the above statements holds, we also have, as $z \rightarrow 0$ n.t., 
\begin{equation*} \label{cor1.2a}
z \ll r_{\Psi_{\mu}}(z);
\end{equation*}
as $y \rightarrow \infty$
\begin{equation} \label{cor1.2b}
\Im r_{\Psi_{\mu}}\left(-iy^{-1}\right) \sim -\frac{\pi}{2}y^{p}\mu\left(y,\infty\right) \gg y^{-1}
\end{equation}
\end{theorem}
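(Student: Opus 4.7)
The plan is to derive Theorem \ref{cor1.2} from the analogous statement for the Cauchy transform, namely \cite{hazra1}[Theorem 2.2], by means of the identity \eqref{real} which links the remainder terms of $G_\mu$ and $\Psi_\mu$ through the inversion $z\mapsto z^{-1}$. Since this is the $p\geqslant 1$, $\alpha=p$ case, the Cauchy-transform version gives: for $\mu\in\mathcal{M}_p$, regular variation of $\mu(y,\infty)$ with index $-p$ is equivalent to $\Im r_{G_\mu}(iy)$ being slowly varying together with $\Re r_{G_\mu}(iy)\gg y^{-1}$, and in that case $\Im r_{G_\mu}(iy)\sim -(\pi/2)y^p\mu(y,\infty)$ and $r_{G_\mu}(z)\gg z^{-1}$ as $z\to\infty$ non-tangentially in $\mathbb{C}^+$.

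The first step is the substitution $z=iy$, so that $z^{-1}=-iy^{-1}$. By \eqref{real} one has
$$\Re r_{\Psi_\mu}(-iy^{-1})=\Re r_{G_\mu}(iy),\qquad \Im r_{\Psi_\mu}(-iy^{-1})=\Im r_{G_\mu}(iy),$$
which immediately transports each of the two asymptotic conditions in the Cauchy-transform characterisation to the corresponding conditions in statement (2) of Theorem \ref{cor1.2}. The same substitution converts the asymptotic $\Im r_{G_\mu}(iy)\sim -(\pi/2)y^p\mu(y,\infty)$ into the claimed $\Im r_{\Psi_\mu}(-iy^{-1})\sim -(\pi/2)y^p\mu(y,\infty)$, and the lower bound $\gg y^{-1}$ carries over unchanged since $y$ is the same parameter on both sides.

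For the non-tangential statement $z\ll r_{\Psi_\mu}(z)$ as $z\to 0$ n.t., the plan is to apply the inversion to the corresponding statement $r_{G_\mu}(w)\gg w^{-1}$ as $w\to\infty$ non-tangentially. Setting $w=z^{-1}$ and using \eqref{GPsi} gives $r_{\Psi_\mu}(z)=r_{G_\mu}(z^{-1})$, while the condition $w\to\infty$ with $|\Re w|$ comparable to $\Im w$ translates, upon inversion, precisely into $z\to 0$ inside a cone of the form $\Delta_{\kappa,\delta}\subset\mathbb{C}^-$ used in the paper's definition of n.t.\ convergence. Therefore $r_{G_\mu}(z^{-1})\gg z$ as $z\to 0$ n.t., which is the required bound.

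The one piece of bookkeeping that deserves care — and is the principal (though routine) obstacle — is matching the notion of non-tangential approach in $\mathbb{C}^+$ used for the Cauchy transform in \cite{hazra1} to the cones $\Delta_{\kappa,\delta}\subset\mathbb{C}^-$ used here for $\Psi_\mu$; this amounts to checking that the Möbius map $z\mapsto z^{-1}$ sends each such cone at $0$ in $\mathbb{C}^-$ to a Stolz-type region at $\infty$ in $\mathbb{C}^+$ and vice versa, with constants $\kappa$ and $\delta$ adjusting in a controlled way. Once this is verified, all the asymptotic relations and non-tangential limits in \cite{hazra1}[Theorem 2.2] transfer verbatim and yield the stated equivalence together with the asymptotic \eqref{cor1.2b}.
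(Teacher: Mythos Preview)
Your proposal is correct and follows exactly the paper's approach: the paper does not give a separate proof of Theorem~\ref{cor1.2} but simply states that Theorems~\ref{cor1.1}--\ref{cor1.5} are the results \cite{hazra1}[Theorem 2.1--2.4] rewritten in terms of $r_{\Psi_\mu}$ via the identity~\eqref{real} (and~\eqref{GPsi}). Your extra care about the inversion of cones under $z\mapsto z^{-1}$ is a welcome elaboration of the same argument.
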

If we consider $\alpha \in [0,1)$, then
\begin{theorem} \label{cor1.3}
Let $\mu$ be in $\mathcal{M}_{0}$ and $0 \leq \alpha < 1$. The following are equivalent:
\begin{enumerate}
\item $\mu\left(y,\infty\right)$ is regularly varying of index $-\alpha$.
\item $\Im \Psi_{\mu}\left(-iy^{-1}\right)$ is regularly varying of index $-\alpha$ and $$\Re \Psi_{\mu}\left(-iy^{-1}\right) \approx \Im\Psi_{\mu}\left(-iy^{-1}\right).$$
\end{enumerate}
If any of the above statements holds, we also have, as $z \rightarrow 0$ n.t., 
\begin{equation*} \label{cor1.3a}
z \ll {\Psi_{\mu}}(z);
\end{equation*}
as $y \rightarrow \infty$
\begin{align} \label{cor1.3b}
\Im \Psi_{\mu}\left(-iy^{-1}\right) &\sim -\frac{\pi\left(1-\alpha\right)/2}{\cos\left(\pi\alpha/2\right)}\mu\left(y,\infty\right) \gg y^{-1}
\end{align}
and
\begin{equation} \label{cor1.3c}
\Re \Psi_{\mu}\left(-iy^{-1}\right) \sim -d_{\alpha}\mu\left(y,\infty\right) \gg y^{-1}
\end{equation}
where $d_{\alpha}$ is as in \eqref{dalpha}.
\end{theorem}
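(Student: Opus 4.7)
The plan is to reduce both directions of the equivalence to the asymptotic analysis of integrals of the form $\int_0^\infty k(u)\overline F(yu)\,du$, where $\overline F(t)=\mu(t,\infty)$. Rationalising the denominator in $\Psi_\mu(-iy^{-1})=\int_0^\infty \frac{-it/y}{1+it/y}\,d\mu(t)$ gives
\[
\Re \Psi_\mu(-iy^{-1})=-\int_0^\infty\frac{t^2}{y^2+t^2}\,d\mu(t),\qquad \Im \Psi_\mu(-iy^{-1})=-\int_0^\infty\frac{yt}{y^2+t^2}\,d\mu(t).
\]
An integration by parts (the boundary terms vanish because $\overline F(0)=1$ while both kernels vanish at $0$, and $\overline F(\infty)=0$) followed by the substitution $t=yu$ converts these into
\[
-\Re\Psi_\mu(-iy^{-1})=\int_0^\infty\frac{2u}{(1+u^2)^2}\overline F(yu)\,du,\qquad -\Im\Psi_\mu(-iy^{-1})=\int_0^\infty\frac{1-u^2}{(1+u^2)^2}\overline F(yu)\,du.
\]

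\textbf{Abelian direction, (1)$\Rightarrow$(2).} Assuming $\overline F$ is regularly varying of index $-\alpha$ with $\alpha\in[0,1)$, Karamata's uniform convergence theorem yields $\overline F(yu)/\overline F(y)\to u^{-\alpha}$ locally uniformly on $(0,\infty)$, and Potter's bounds supply an integrable dominating function. The hypothesis $\alpha<1$ is exactly what makes $u^{1-\alpha}$ integrable at $0$, while at infinity the kernels decay like $u^{-3}$ and absorb any polynomial blow-up in Potter's upper bound. The small-$u$ region $u\in[0,y_0/y]$, where Potter's estimate is not available, is handled separately using only $\overline F\le 1$ and contributes $O(y^{-2}/\overline F(y))=o(1)$ because $\alpha<1$. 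Dominated convergence then produces the claimed asymptotics with constants given by the Mellin integrals $\int_0^\infty 2u^{1-\alpha}/(1+u^2)^2\,du$ and $\int_0^\infty (1-u^2)u^{-\alpha}/(1+u^2)^2\,du$, both of which reduce to Beta functions via $w=u^2$ and are then evaluated using the reflection formula $\Gamma(x)\Gamma(1-x)=\pi/\sin(\pi x)$. The bound $\mu(y,\infty)\gg y^{-1}$ is immediate from $\alpha<1$, and extending the conclusion from the negative imaginary axis to the non-tangential limit $z\to 0$ inside a cone $\Delta_{\kappa,\delta}$ uses the holomorphicity of $\Psi_\mu$ on $\mathbb C\setminus\mathbb R_+$ together with the same Karamata-type dominated convergence applied along radial slices.

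\textbf{Tauberian direction (2)$\Rightarrow$(1), and the main obstacle.} For the converse, the plan is a Karamata-type Tauberian argument. The identity \eqref{psiG} expresses $\Im\Psi_\mu(-iy^{-1})$ as (a multiple of) $\Im G_\mu(iy)/y$, and $-\Im G_\mu(iy)/\pi$ is precisely the Poisson kernel integral of $\mu$ at imaginary height $y$. The implication therefore reduces to a standard Tauberian theorem for Poisson/Stieltjes transforms in the Karamata--de Haan framework (as used in \cite{ber:pata:biane} and \cite{taylorseries}, and worked out in detail in \cite{hazra1}): regular variation of the transform at the origin is equivalent to regular variation of the tail of the generating measure, provided a non-oscillation condition is in force. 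The hypothesis $\Re\Psi_\mu(-iy^{-1})\approx\Im\Psi_\mu(-iy^{-1})$ is precisely this non-oscillation ingredient, and pins down a genuine leading asymptotic. The main technical obstacle is the boundary case $\alpha=0$: the kernel $(1-u^2)/(1+u^2)^2$ integrates to zero on $(0,\infty)$, so the naive dominated convergence limit for $\Im\Psi_\mu$ vanishes and the leading-order information is destroyed by cancellation. Extracting the genuine subleading asymptotic in this regime requires a de Haan class refinement, or equivalently a monotone-density argument applied to $\int_y^\infty t^{-2}\overline F(t)\,dt\sim y^{-1}\overline F(y)$ for slowly varying $\overline F$; handling this endpoint case cleanly is the most delicate step of the proof.
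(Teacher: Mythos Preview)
The paper does not give a proof of this statement. The block Theorem~\ref{cor1.1}--\ref{cor1.5} is introduced with the sentence ``we recall the following results from \cite{hazra1}[Theorem 2.1--2.4]'', and the only step carried out in this paper is the change of variables $r_{G_\mu}(z)=r_{\Psi_\mu}(z^{-1})$ from~\eqref{GPsi}--\eqref{real}, which transcribes the cited statements about the Cauchy-transform remainder into statements about $\Psi_\mu$. So there is no argument here for you to compare against; the substance lives entirely in \cite{hazra1}.

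Your proposal is therefore not a different proof of the paper's argument but an independent direct proof, and its architecture---rationalise, integrate by parts against $\overline F$, then run Karamata/Potter for the Abelian half and a Stieltjes Tauberian theorem for the converse---is exactly the standard one, and is in essence what \cite{hazra1} does on the $G_\mu$ side. One small correction: for the imaginary-part kernel $(1-u^2)/(1+u^2)^2$, which is $\approx 1$ near $u=0$, the contribution from $u\in[0,y_0/y]$ is $O(y^{-1})$, not $O(y^{-2})$; this is still $o(\overline F(y))$ since $\alpha<1$, so the argument survives. Your flag at $\alpha=0$ is well placed: the sign-changing kernel $(1-u^2)/(1+u^2)^2$ does integrate to zero, and a naive dominated-convergence pass genuinely loses the leading term. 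A cleaner way to handle $\alpha=0$ is to skip the integration by parts for the imaginary part and work with the original nonnegative kernel $yt/(y^2+t^2)$ together with a monotone-density argument, or equivalently to go through the $G_\mu$ formulation as \cite{hazra1} does.
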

Finally, when $p \geqslant 0$ and $\alpha = p+1$.
\begin{theorem} \label{cor1.5}
Let $\mu$ be in $\mathcal{M}_{p}, p \geqslant 1$ and $ \alpha = p+1, r \in \left(0,1/2\right)$. The following are equivalent:
\begin{enumerate}
\item $\mu\left(y,\infty\right)$ is regularly varying of index $-\left(p+1\right)$.
\item $\Re r_{\Psi_{\mu}}\left(-iy^{-1}\right)$ is regularly varying of index $-1$ and
\begin{equation} \label{cor1.5c}
y^{-1} \ll \Im r_{\Psi_{\mu}}\left(-iy^{-1}\right) \ll y^{-\left(1-r/2\right)}.
\end{equation}
\end{enumerate}
If any of the above statements holds, we also have, as $z \rightarrow 0$ n.t., 
\begin{equation*} \label{cor1.5a}
z \ll r_{\Psi_{\mu}}\left(z\right);
\end{equation*}
as $y \rightarrow \infty$
\begin{align} \label{cor1.5b}
y^{-(1+r/2)} \ll \Re r_{\Psi_{\mu}}\left(-iy^{-1}\right) &\sim -\frac{\pi}{2}y^{p}\mu\left(y,\infty\right) \ll y^{-\left(1-r/2\right)}.
\end{align}
\end{theorem}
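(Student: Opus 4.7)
The plan is to derive Theorem \ref{cor1.5} as a direct translation of the corresponding statement for the Cauchy-transform remainder $r_{G_\mu}$ at infinity, namely Theorem~2.4 of \cite{hazra1}. The bridge is the algebraic identity \eqref{GPsi}, $r_{G_\mu}(z) = r_{\Psi_\mu}(z^{-1})$, which by \eqref{real} implies that the real and imaginary parts transfer faithfully under $z \mapsto z^{-1}$. First I would set $z = iy$ with $y > 0$ and record the two identities $\Re r_{\Psi_\mu}(-iy^{-1}) = \Re r_{G_\mu}(iy)$ and $\Im r_{\Psi_\mu}(-iy^{-1}) = \Im r_{G_\mu}(iy)$. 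From this viewpoint the equivalence of (1) and (2) in Theorem \ref{cor1.5}, together with the asymptotic \eqref{cor1.5b}, becomes a verbatim reading of the corresponding conclusions of \cite{hazra1}. The non-tangential bound $z \ll r_{\Psi_\mu}(z)$ as $z \to 0$ will then be obtained by substituting $z \mapsto z^{-1}$ into the bound $r_{G_\mu}(z) \gg z^{-1}$ as $z \to \infty$ n.t.\ proved in \cite{hazra1}.

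The real analytic content, which I would import rather than redo, lies in the boundary regime $\alpha = p+1$: here the $(p+1)$-st moment of $\mu$ just fails to exist, so Karamata's Tauberian theorem cannot be invoked directly after a single Taylor-polynomial subtraction. Instead, in \cite{hazra1} the remainder is sandwiched between the two scales $y^{-(1+r/2)}$ and $y^{-(1-r/2)}$, which is the source of the parameter $r \in (0,1/2)$ appearing in \eqref{cor1.5c} and \eqref{cor1.5b}. In that argument the real part of $r_{G_\mu}(iy)$ carries the regularly varying contribution of index $-1$, while the imaginary part is controlled by a slowly varying residual that vanishes faster than any negative power below $y^{-1}$; the derivation proceeds via Stieltjes inversion applied to $G_\mu$ combined with an integration by parts that is compatible with the polynomial subtraction in \eqref{rG}. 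I would cite this block en bloc rather than reprove it.

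The hardest step I anticipate is not fresh analysis but the bookkeeping required to confirm that each asymptotic estimate, each constant $\pi/2$, each exponent of $y$, and each direction of the sandwich bound translates correctly under $z \mapsto z^{-1}$ between the Cauchy and $\Psi$ formulations, matching the constants in \eqref{cor1.5b} against those in Theorem~2.4 of \cite{hazra1}. I would also perform a brief check that the non-tangential cone $\Delta_{\kappa,\delta}$ at $0$ is mapped, up to rescaling $\kappa$ and $\delta$, by the inversion $z \mapsto z^{-1}$ to a non-tangential cone at infinity of the form assumed in \cite{hazra1}, so that the notion of n.t.\ limit lines up on both sides. Once these display-by-display and cone-by-cone correspondences are verified, Theorem \ref{cor1.5} follows.
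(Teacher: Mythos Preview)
Your proposal is correct and matches the paper's own treatment exactly: the paper does not reprove Theorem~\ref{cor1.5} but simply states it as a translation of Theorem~2.4 of \cite{hazra1} via the identity $r_{G_\mu}(z)=r_{\Psi_\mu}(z^{-1})$ of~\eqref{GPsi}--\eqref{real}. Your additional remarks about checking the cone correspondence under $z\mapsto z^{-1}$ and matching constants are more than the paper spells out, but are the right details to verify.
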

To study the relation between the remainder terms of $\Psi$ and $\eta$ transforms we consider the following classes of functions which contains $\Psi_{\mu}$ depending on regular variation of $\mu$. We shall show that the classes are closed under certain operations. 
Let $\mathcal{H}$ denote the set of analytic functions $A$ having a domain $\mathcal{D}_A$ such that for all positive $\kappa$, there exists $\delta > 0$ with $\Delta_{\kappa, \delta} \subset \mathcal{D}_A$.
\begin{definition} \label{defZ}
Let $Z_{1,p}$ denote the set of all $A \in \mathcal{H}$ which satisfies the following conditions:

\let\myenumi\theenumi
\renewcommand{\theenumi}{R\myenumi}
\begin{enumerate}
\item For $p \geqslant 0$, $A$ has Taylor series expansion with real coefficients of the form $$A(z) = \sum_{j=1}^{p}a_jz^j + z^pr_A(z)$$ where $a_1, \cdots , a_p$ are real numbers and for $p = 0$ we interpret the term in the sum as absent.\label{R1}
\item $z \ll r_A(z) \ll 1$ as $z \to 0$ n.t.\label{R2}
\item $\Re r_A(-iy^{-1}) \approx \Im r_A(-iy^{-1})$ as $y \to \infty$.\label{R3}
\end{enumerate}

Let $Z_{2,p}$ be the same as $Z_{1,p}$ with \eqref{R1} and \eqref{R2} but \eqref{R3} is replaced by
\renewcommand{\theenumi}{R3$^\prime$}
\begin{enumerate}
\item $y^{-(1+r/2)} \ll \Re r_A(-iy^{-1}) \ll y^{-(1-r/2)}$ and $y^{-1} \ll \Im r_A(-iy^{-1}) \ll y^{-(1-r/2)}$ for any $r \in (0,1/2)$.\label{R3prime}
\end{enumerate}
Let $Z_{3,p}$ be the same as $Z_{1,p}$ with \eqref{R1} for $p \geqslant 1$ and same \eqref{R2} but \eqref{R3} is replaced by
\renewcommand{\theenumi}{R3$^{\prime\prime}$}
\begin{enumerate}
\item $\Re r_A(-iy^{-1}) \gg y^{-1}$ and $\Im r_A(-iy^{-1}) \gg y^{-1}$. \label{R3primeprime}
\end{enumerate}
\let\theenumi\myenumi
\end{definition}
\begin{remark} \label{rmkZ}
Suppose that $\mu(y,\infty)$ is regularly varying $-\alpha$ and $\mu \in \mathcal{M}_p$ with $\alpha \in [p,p+1]$. Then note the following:
\begin{enumerate}
\item If $p \geqslant 1$, $p <\alpha < p+1$ or $p=0$, $0 \leqslant \alpha <1$, then $\Psi_{\mu}(z) \in Z_{1,p}$. This follows from Theorem \ref{cor1.1} and Theorem \ref{cor1.3}.
\item If $p \geqslant 0$, $\alpha = p+1$, then $\Psi_{\mu}(z) \in Z_{2,p}$. This follows from Theorem \ref{cor1.5}.
\item If $p \geqslant 1$, $\alpha = p$, then $\Psi_{\mu}(z) \in Z_{3,p}$. This follows from Theorem \ref{cor1.2}.
\end{enumerate}
Hence the proposition \ref{etatrans}, given below, allows us to conclude that $\Psi_{\mu}(z) \in Z_{i,p}$ if and only if $\eta_{\mu}(z) \in Z_{i,p}$ for any fixed $i \in \{1,2,3\}$ and $p \in \{0,1,2, \cdots \}$.
\end{remark}

\begin{proposition} \label{etatrans}
For any fixed $i \in \{1,2,3\}$ and $p \geqslant 0$ (excluding $Z_{3,0}$ as this set is not defined), if $A(z) \in Z_{i,p}$ then $B(z) = A(z)(1 \pm A(z))^{-1} \in Z_{i,p}$. Furthermore, we have
\begin{enumerate}
\item $r_B(z) \sim r_A(z)$, as $z \to 0$ n.t.;
\item $\Re r_B(-iy^{-1}) \sim \Re r_A(-iy^{-1})$ as $y \to \infty$ and 
\item $\Im r_B(-iy^{-1}) \sim \Im r_A(-iy^{-1})$ as $y \to \infty$.
\end{enumerate}
\end{proposition}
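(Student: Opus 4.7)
The plan is to derive the Taylor structure and remainder of $B$ by algebraic manipulation of the defining relation $B(1 \pm A) = A$, and then to verify each asymptotic property of the class $Z_{i,p}$ by quantitative comparison with the corresponding property of $A$. First observe that since $A(z) \to 0$ as $z \to 0$ n.t.\ (from $\textbf{(R1)}$--$\textbf{(R2)}$, since $A = P_A + z^p r_A = o(1)$), $B = A/(1 \pm A)$ is well defined and analytic in a suitable truncated cone $\Delta_{\kappa,\delta}$, so that $B \in \mathcal{H}$.

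The algebraic core of the argument is to rewrite $B = A \mp AB$, substitute $A = P_A + z^p r_A$ and $B = P_B + z^p r_B$, and split the product $P_A(z) P_B(z) = Q_1(z) + z^{p+1} Q_2(z)$ into its degree-$\leqslant p$ part $Q_1$ and higher-degree part $z^{p+1} Q_2$. Matching polynomial parts of degree $\leqslant p$ gives the recursion $P_B = P_A \mp Q_1$, which determines $P_B$ inductively in its coefficients, shows they are real, and verifies $\textbf{(R1)}$ for $B$. Matching the remainders and solving for $r_B$ then yields
\begin{equation}\label{eq:rbformula-sketch}
r_B = \frac{(1 \mp P_B)\, r_A \mp z Q_2}{1 \pm P_A \pm z^p r_A},
\end{equation}
which in the degenerate $p = 0$ case reduces correctly to $r_B = r_A/(1 \pm r_A)$. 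As $z \to 0$ n.t., the denominator tends to $1$, while $P_B r_A = O(z)\, r_A = o(r_A)$ and $z Q_2 = O(z) = o(r_A)$ (using $r_A \gg z$ from $\textbf{(R2)}$). Hence $r_B = r_A(1 + o(1))$, which simultaneously proves conclusion (1) and verifies $\textbf{(R2)}$ for $B$.

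For the axial statements (2) and (3), I set $z = -iy^{-1}$ in \eqref{eq:rbformula-sketch} and expand $r_B - r_A$ as a finite sum of error terms. Since $P_A$ and $P_B$ are polynomials of degree $\geqslant 1$ with real coefficients, $\Re P_A(-iy^{-1}) = O(y^{-2})$ and $\Im P_A(-iy^{-1}) = O(y^{-1})$, and similarly for $P_B$ and $zQ_2$. Combined with the class-specific lower bounds on $\Re r_A$ and $\Im r_A$ (namely $\gg y^{-1}$ in $Z_{1,p}$ and $Z_{3,p}$, and the more delicate asymmetric bounds in $Z_{2,p}$ together with the upper bound $|r_A| \ll y^{-(1-r/2)}$), term-by-term estimates give $\Re(r_B - r_A) = o(\Re r_A)$ and $\Im(r_B - r_A) = o(\Im r_A)$, establishing (2), (3), and the appropriate property $\textbf{(R3)}$, $\textbf{(R3')}$, or $\textbf{(R3'')}$ for $B$.

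The main obstacle is the $Z_{2,p}$ case, where the error $\Im(zQ_2) = O(y^{-1})$ must be shown to be $o(\Im r_A)$ using the strict bound $\Im r_A \gg y^{-1}$, and the error contributions to $\Re r_B$ of order $y^{-1}|r_A| \ll y^{-(2-r/2)}$ must land below the lower bound $\Re r_A \gg y^{-(1+r/2)}$, reducing to the inequality $2 - r/2 > 1 + r/2$ (which holds since $r < 1/2$). Sign management across the cases $B = A/(1+A)$ versus $B = A/(1-A)$ and the term-by-term exponent bookkeeping, though elementary, will constitute the bulk of the technical work.
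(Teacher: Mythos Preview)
Your proof is correct and reaches the same conclusions as the paper, but the organization is genuinely different. The paper expands $B = A(1\pm A)^{-1}$ as a truncated geometric series $\sum_{k=1}^{p}(\mp 1)^{k+1}A^{k} + O(A^{p+1})$, substitutes the expansion of $A$, and then collects terms separately in each of five cases $(Z_{1,0},\,Z_{1,p\geqslant 1},\,Z_{2,0},\,Z_{2,p\geqslant 1},\,Z_{3,p\geqslant 1})$, deriving in each case an ad hoc form of $r_B - r_A$ (e.g.\ $o(|r_A|)$, or $c_1 z + O(zr_A)$, or $O(|z|)$) tailored to the hypotheses at hand. You instead work directly from the defining relation $B(1\pm A)=A$ to obtain the single closed formula \eqref{eq:rbformula-sketch} for $r_B$, valid uniformly in $p$, and then feed the class-specific bounds into a common error template $\Re(P\cdot r_A),\ \Im(P\cdot r_A),\ \Re(zQ_2),\ \Im(zQ_2)$, exploiting that real polynomials evaluated at $-iy^{-1}$ have real part $O(y^{-2})$ and imaginary part $O(y^{-1})$. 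The net estimates coincide with the paper's, but your route avoids rederiving the remainder expansion separately for each case; the price is that the recursion $P_B = P_A \mp Q_1$ must be read as a \emph{definition} of $P_B$ (with $r_B := z^{-p}(B-P_B)$), after which \eqref{eq:rbformula-sketch} and the bound $r_B = o(1)$ together establish (R1), rather than (R1) being assumed in advance. Your identification of the $Z_{2,p}$ bottleneck and the reduction to $2-r/2>1+r/2$ matches exactly the delicate step in the paper's case~(4).
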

\begin{proof}
We shall divide this proof into some cases because depending on $i$ and $p$ the calculations are different. We shall only show for $B(z) = A(z)(1 + A(z))^{-1}$. Exactly same calculation will prove the result for $B(z) = A(z)(1 - A(z))^{-1}$.
\begin{enumerate}
\item Suppose $A \in Z_{1,0}$. Then $r_A(z) = A(z)$. Therefore $r_B(z) = B(z)$. This shows that \eqref{R1} is satisfied.

Now,
\begin{align}
B(z) &= A(z)(1 + A(z))^{-1} \label{Psi111} \\ 
&= A(z) + o(|A(z)|)  \text{  as  } z \rightarrow 0 \text{ n.t.} \label{Psi11}
\end{align}  
Therefore, we have $B(z) \sim A(z)$ as $z \rightarrow 0$ n.t. Therefore \eqref{R2} is satisfied.

From ~\eqref{Psi11} we have,
\begin{align*}
\Re B(-iy^{-1}) &= \Re A(-iy^{-1}) + o(|A(-iy^{-1})|),\\
\Im B(-iy^{-1}) &= \Im A(-iy^{-1}) + o(|A(-iy^{-1})|).
\end{align*}
Now to show the equivalence of the real parts and imaginary parts, it is enough to show that 
$$
\frac{|A(-iy^{-1})|}{\Re A(-iy^{-1})} 
\text{ and } 
\frac{|A(-iy^{-1})|}{\Im A(-iy^{-1})}
$$
remains bounded as $y \rightarrow \infty$. We shall show the first part only as the second one follows by the same arguments.
\begin{align*}
\left(\left|\frac{A(-iy^{-1})}{\Re A(-iy^{-1})}\right|\right)^2  &= \frac{(\Re A(-iy^{-1}))^2 + (\Im A(-iy^{-1}))^2}{({\Re A(-iy^{-1})})^2} \\   
 &= 1+\left(\frac{\Im A(-iy^{-1})}{\Re A(-iy^{-1})}\right)^2,
\end{align*}
which goes to a constant as $y \rightarrow \infty$ by the fact that $A$ satisfies \eqref{R3}. Therefore \eqref{R3} is satisfied for $B(z)$ and the asymptotics in the statement also remain true.
\item Suppose $A \in Z_{1,p}$, $p \geqslant 1$. We note that $|A (z)| \to 0$ as $z \to 0$ n.t. Thus we have the following series expansion using equation \eqref{etapsi} for $B (z)$ near zero:
\begin{align*}
B (z) = \sum_{i=1}^{p}(-1^{i+1})(A (z))^{i} + \text{O}\left(({A (z)})^{p+1}\right).
\end{align*}
Using \eqref{R1} and \eqref{R2} we get $$\frac{({A (z)})^{p+1}}{z^{p}{r_{A }(z)}} =  \left(\frac{A (z)}{z}\right)^{p+1}\frac{z}{r_{A }(z)}\to 0$$ as $z \to 0$. Hence
\begin{align}\label{etamun10}
B (z) = \sum_{i=1}^{p}((-1^{i+1})(\sum_{j=1}^{p}m_{j}z^{j} + z^{p}r_{A }(z))^{j}) + \text{o}\left(z^{p}{r_{A }(z)}\right).
\end{align}
We expand the term in the right-hand side of \eqref{etamun100}. As $z \ll r_{A }(z)$, all powers of $z$ with indices greater than $p$ can be absorbed in the last term on the right-hand side. Then collect upto $p$-th power of $z$ to form a polynomial $P(z)$ of degree at most $p$ with real coefficients without the constant term. Finally we consider the terms containing some powers of $r_{A }(z)$ which will contain terms of the form $z^{l_{1}}(z^{p}r_{A }(z))^{l_{2}}$ for integers $l_{1} \geqslant 0$ and $l_{2} \geqslant 1$ with leading term $z^{p}r_{A }(z)$ and the remaining terms can be absorbed in the last term in the right-hand side. Thus we get,
\begin{align*}
B (z) = P(z) + z^{p}r_{A }(z) + \text{o}(z^{p}r_{A }(z)).
\end{align*}
Therefore \eqref{R1} is satisfied. Now by uniqueness of the Taylor series expansion, we have
\begin{align}\label{etamun}
r_{B }(z) = r_{A }(z) + \text{o}(r_{A }(z)).
\end{align}
 Therefore $r_{B }\left(z\right) \sim r_{A }\left(z\right)$. So \eqref{R2} is satisfied. Thus
\begin{align*}
\Re r_{B }\left(z\right) &= \Re r_{A }\left(z\right) + o\left(|r_{A }\left(z\right)|\right), \\
\Im r_{B }\left(z\right) &= \Im r_{A }\left(z\right) + o\left(|r_{A }\left(z\right)|\right).
\end{align*}
Now from \eqref{R3} and same calculations like in the first case we get that $B(z)$ is satisfying \eqref{R3}.
\item Suppose $A \in Z_{2,0}$. Here we only need to show \eqref{R3prime} as \eqref{R1} and \eqref{R2} have been already shown in case $1$. From \eqref{Psi111},
\begin{align*}
B \left(z\right) &= A \left(z\right) + O\left(|A \left(z\right)|^2\right).
\end{align*}
Consequently,
\begin{align*}
\Re B \left(z\right) &= \Re A \left(z\right) + O\left(|A \left(z\right)|^2\right),\\
\Im B \left(z\right) &= \Im A \left(z\right) + O\left(|A \left(z\right)|^2\right).
\end{align*}
It is enough to show that 
$\frac{|A \left(-iy^{-1}\right)|^2}{\Re A \left(-iy^{-1}\right)}$ and $\frac{|A \left(-iy^{-1}\right)|^2}{\Im A \left(-iy^{-1}\right)}$ both goes to zero as $y \rightarrow \infty$. For that
\begin{align*}
\frac{|A \left(-iy^{-1}\right)|^2}{\Re A \left(-iy^{-1}\right)} &= \Re A \left(-iy^{-1}\right) + \frac{\left(\Im A \left(-iy^{-1}\right)\right)^2}{\Re A \left(-iy^{-1}\right)}\\ &= \Re A \left(-iy^{-1}\right)+\left(\frac{\Im A \left(-iy^{-1}\right)}{y^{-\left(1-r/2\right)}}\right)^2\frac{y^{-\left(1+r/2\right)}}{\Re A \left(-iy^{-1}\right)}y^{-1+3r/2},
\end{align*}
which goes to zero as $y \rightarrow \infty$ using \eqref{R3prime} for $A(z)$. For the other terms similarly note that
\begin{align*}
\frac{|A \left(-iy^{-1}\right)|^2}{\Im A \left(-iy^{-1}\right)} &= \Im A \left(-iy^{-1}\right) + \frac{\left(\Re A \left(-iy^{-1}\right)\right)^2}{\Im A \left(-iy^{-1}\right)} \\ &= \Im A \left(-iy^{-1}\right) + \left(\frac{\Re A \left(-iy^{-1}\right)}{y^{-\left(1-r/2\right)}}\right)^2\frac{y^{-1}}{\Im A \left(-iy^{-1}\right)}y^{-1+r}
\end{align*}
also goes to zero as $y \rightarrow \infty$ using \eqref{R3prime} for $A(z)$. Thus \eqref{R3prime} is obviously satisfied by $B(z)$.
\item Suppose $A \in Z_{2,p}$, $p \geqslant 1$. Here we need to show only \eqref{R3prime}. From \eqref{etamun10} we can write using a similar argument given in case (2),
\begin{align*}
B (z) = P(z) + z^{p}r_{A }(z) + c_{1}z^{p+1} + \text{O}(z^{p+1}r_{A }(z)).
\end{align*}
Therefore,
\begin{align}\label{imagine}
r_{B }\left(z\right) &= r_{A }(z) + c_{1}z + \text{O}(zr_{A }(z)).
\end{align}
So, $r_{B }\left(z\right) \sim r_{A }\left(z\right)$ and evaluating \eqref{imagine} at the point $z=-iy^{-1}$,
\begin{align*}
r_{B }\left(-iy^{-1}\right) &= r_{A }(-iy^{-1}) + c_{1}(-iy^{-1}) + \text{O}(zr_{A }(-iy^{-1}))
\end{align*}
and after taking the real parts on both sides, we have
\begin{align*}
\Re r_{B }\left(-iy^{-1}\right) &= \Re r_{A }\left(-iy^{-1}\right) + O\left(y^{-1}|r_{A }\left(-iy^{-1}\right)|\right).
\end{align*}
Now,
\begin{align*}
\left|\frac{y^{-1}|r_{A }\left(-iy^{-1}\right)|}{\Re r_{A }\left(-iy^{-1}\right)}\right|^2 &= \frac{1}{y^2} + \frac{1}{y^2}\left(\frac{\Im r_{A }\left(-iy^{-1}\right)}{\Re r_{A }\left(-iy^{-1}\right)}\right)^2
\end{align*}
and
\begin{align*}
y^{-1}\frac{\Im r_{A }\left(-iy^{-1}\right)}{\Re r_{A }\left(-iy^{-1}\right)} &= \frac{\Im r_{A }\left(-iy^{-1}\right)}{y^{-\left(1-r/2\right)}}\frac{y^{-\left(1+r/2\right)}}{\Re r_{A }\left(-iy^{-1}\right)}y^{-1+r}
\end{align*}
goes to zero as $y \rightarrow \infty $ by \eqref{R3prime}. As a consequence we conclude that $\Re r_{B }\left(-iy^{-1}\right) \sim \Re r_{A }\left(-iy^{-1}\right)$.

For the imaginary part asymptotic we write from \eqref{imagine},
\begin{align}\label{etamun100}
r_{B }\left(z\right) &= r_{A }(z) + \text{O}(|z|).
\end{align}
Now putting $z = -iy^{-1}$ and taking imaginary parts on both sides we get 
\begin{align*}
\Im r_{B }\left(-iy^{-1}\right) &= \Im r_{A }(-iy^{-1}) + \text{O}(|y^{-1}|)
\end{align*}
 and note that ${y\Im r_{A }\left(-iy^{-1}\right)} \rightarrow \infty$ as $z \rightarrow \infty$ by \eqref{R3prime}. Hence $\Im r_{B }\left(-iy^{-1}\right) \sim \Im r_{A }(-iy^{-1})$. So we are done in this case.
 \item Suppose $A \in Z_{3,p}$, $p \geqslant 1$. Here \eqref{R1} and \eqref{R2} is shown in case (2). Now we write the following using \eqref{etamun100},
\begin{align*}
\Re r_{B }\left(-iy^{-1}\right) &= \Re r_{A }\left(-iy^{-1}\right) + O\left(|y^{-1}|\right),\\
\Im r_{B }\left(-iy^{-1}\right) &= \Im r_{A }\left(-iy^{-1}\right) + O\left(|y^{-1}|\right).
\end{align*}
Now ${y\Re r_{A }\left(-iy^{-1}\right)}$ and ${y\Im r_{A }\left(-iy^{-1}\right)}$ both go to infinity as $y \rightarrow \infty$ by \eqref{R3primeprime}. This shows that \eqref{R3primeprime} is also satisfied by $B(z)$ in this case.
\end{enumerate}
\end{proof}
We give the proofs of the main theorems stated in Theorem~\ref{ccor1.1}--Theorem~\ref{ccor1.5}. We shall only prove Theorem~\ref{ccor1.1} and the rest of the theorems will follow by similar arguments.

\textbf{Proof of Theorem~\ref{ccor1.1}}: Combining Theorem \ref{cor1.1}, Proposition~\ref{etatrans} and the definition of $B$-transform coming out of $\eta$-transform, we get Theorem\ref{ccor1.1} because the asymptotic relationship of $\Psi$-transform follows from Theorem~\ref{cor1.1}, the relationship between $\Psi$ and $\eta$ transforms follows from the Proposition ~\ref{etatrans} (see also Definition \ref{defZ} and Remark \ref{rmkZ}) and finally the correspondence between $\eta$ and $B$ transforms is ensured by equations stated in \eqref{eq:t}. \qed

\textbf{Proof of Theorem \ref{Bmuz}:} Recall that we have assumed $\mu \in \mathcal{M}_p$ is regularly varying tail index $-\alpha$, $\alpha \geqslant 0$ with $\alpha \in [p,p+1]$. Since $\nu$ is regularly varying with tail index $-\beta$ with $\beta \geqslant \alpha$ and $\nu$ has finite first moment, there exists a  positive integer $q \geqslant 1$ with $q \geqslant p$ and $\nu \in \mathcal{M}_{q}$. The proof of this theorem is split up into different cases depending on $p$, $q$ and $\alpha$, $\beta$. It contains a number of subcases because the asymptotic relations and the Taylor series like expansions differ with respect to the position of $\alpha$ and $\beta$ in the lattice of non-negative integers.

{\bf Case \eqref{theoremoa}}\label{wone} Recall that in this case we have assumed either $1 \leqslant p < \alpha < p+1$ or $0=p \leqslant \alpha < p+1=1$.

{\bf Subcase (i) } Let $\mu \in \mathcal{M}_{0}, \nu \in \mathcal{M}_{q}$, $2 \leqslant q$ and $0 \leqslant \alpha < 1$ and $q \leqslant \beta \leqslant q+1$. Define 
\begin{align*}
f_{1}\left(z\right) := \frac{1}{B_{\mu}}\left(z\right) \text{ and }
f_{2}\left(z\right) := \frac{1}{m(\nu)B_{\nu}}\left(z\right) &= 1 + d_{1}z + \cdots + d_{q-1}z^{q-1} + z^{q-1}r_{f_{2}}\left(z\right),
\end{align*}
where $d_j$, $j=1,2, \cdots, q-1$ are real coefficients. Therefore
\begin{align*}
f_{1}\left(z\right)f_{2}\left(z\right) &= f_{1}\left(z\right) + d_{1}zf_{1}\left(z\right) + \cdots + d_{q-1}z^{q-1}f_{1}\left(z\right) + z^{q-1}f_{1}\left(z\right)r_{f_{2}}\left(z\right)
\end{align*}
and using Lemma \ref{mnbowtie}
\begin{align} \label{rf1f2}
r_{f_{1}f_{2}}(z) = f_{1}\left(z\right)f_{2}\left(z\right) = f_{1}\left(z\right) + O\left(|zf_{1}\left(z\right)|\right).
\end{align}
Taking the imaginary parts of \eqref{rf1f2}, we get $$\Im r_{f_{1}f_{2}}(-iy^{-1}) = \Im f_{1}\left(-iy^{-1}\right) + O\left(y^{-1}|f_{1}\left(-iy^{-1}\right)|\right).$$ 
Observe,
\begin{align*}
y^{-1}\frac{|f_{1}\left(-iy^{-1}\right)|}{\Im f_{1}\left(-iy^{-1}\right)} &= \left(\frac{\left(\Re f_{1}\left(-iy^{-1}\right)\right)^2+\left(\Im f_{1}\left(-iy^{-1}\right)\right)^2}{y^2\left(\Im f_{1}\left(-iy^{-1}\right)\right)^2}\right)^{1/2}\\ &= \left(\frac{1}{y^2} + \frac{1}{y^2}\left(\frac{\Re f_{1}\left(-iy^{-1}\right)}{\Im f_{1}\left(-iy^{-1}\right)}\right)^2\right)^{1/2} \rightarrow 0 \text{  as } y\rightarrow \infty,  
\end{align*}
using ~\eqref{ccor1.3b} and \eqref{ccor1.3c}. Hence $\Im r_{f_{1}f_{2}}(-iy^{-1}) \sim \Im f_{1}\left(-iy^{-1}\right)$ as $y \to \infty$. Using same arguments for real part we also have $\Re r_{f_{1}f_{2}}(-iy^{-1}) \sim \Re f_{1}\left(-iy^{-1}\right)$.

{\bf Subcase (ii)} Let $\mu \in \mathcal{M}_{0}, \nu \in \mathcal{M}_{q}$, $q =1$ and $0 \leqslant \alpha < 1$ and $q \leqslant \beta \leqslant q+1$. The case is similar to $q \geqslant 2$. Here the equation \eqref{rf1f2} gets replaced by   
\begin{align*}
r_{f_{1}f_{2}}(-iy^{-1}) = f_{1}\left(-iy^{-1}\right) + f_{1}\left(-iy^{-1}\right)r_{f_{2}}\left(-iy^{-1}\right).
\end{align*}
Taking real part and imaginary part we have $\Re r_{f_{1}f_{2}}(-iy^{-1}) \sim \Re f_{1}\left(-iy^{-1}\right)$ and $\Im r_{f_{1}f_{2}}(-iy^{-1}) \sim \Im f_{1}\left(-iy^{-1}\right)$  respectively using $\frac{\Im f_{1}\left(-iy^{-1}\right)}{\Im f_{1}\left(-iy^{-1}\right)}$, $\frac{\Re f_{1}\left(-iy^{-1}\right)}{\Re f_{1}\left(-iy^{-1}\right)}$, $\frac{\Re f_{1}\left(-iy^{-1}\right)}{\Im f_{1}\left(-iy^{-1}\right)} \sim \text{a nonzero constant}$ and $|r_{f_{2}}\left(-iy^{-1}\right)| \to 0$ both as $y \rightarrow \infty$. So the $p = 0$ case is done.

{\bf Subcase (iii)} Here suppose $\mu, \nu \in \mathcal{M}_{1}$ and $1 < \alpha < \beta < 2$. Let $f_{1}\left(z\right) = \frac{1}{m(\mu)B_{\mu}}\left(z\right)$ and $f_{2}\left(z\right) = \frac{1}{m(\nu)B_{\nu}}\left(z\right)$. Then
\begin{align*}
f_{i}\left(z\right) &= 1 + r_{f_{i}}\left(z\right) \text{  for  } i=1,2
\end{align*}
Therefore,
\begin{align*}
r_{f_{1}f_{2}}(z) &= r_{f_{1}}\left(z\right) + r_{f_{2}}\left(z\right) +r_{f_{1}}\left(z\right)r_{f_{2}}\left(z\right).
\end{align*}
So,
\begin{align}\label{needeq1}
 r_{f_{1}f_{2}}(-iy^{-1}) &= r_{f_{1}}\left(-iy^{-1}\right) + r_{f_{2}}\left(-iy^{-1}\right) +r_{f_{1}}\left(-iy^{-1}\right)r_{f_{2}}\left(-iy^{-1}\right)\\
 &=  r_{f_{1}}\left(-iy^{-1}\right) +O\left(|r_{f_{2}}\left(-iy^{-1}\right)|\right) \nonumber.
\end{align}
Thus taking the real and imaginary parts $$\Im r_{f_{1}f_{2}}(-iy^{-1}) = \Im r_{f_{1}}\left(-iy^{-1}\right) +O\left(|r_{f_{2}}\left(-iy^{-1}\right)|\right)$$  and  $$\Re r_{f_{1}f_{2}}(-iy^{-1}) = \Re r_{f_{1}}\left(-iy^{-1}\right) +O\left(|r_{f_{2}}\left(-iy^{-1}\right)|\right).$$
Using ~\eqref{ccor1.1b} and ~\eqref{ccor1.1c},
\begin{align*}
\left(\frac{|r_{f_{2}}\left(-iy^{-1}\right)|}{\Im r_{f_{1}}\left(-iy^{-1}\right)}\right)^2 &= \frac{\left(\Re r_{f_{2}}\left(-iy^{-1}\right)\right)^2 + \left(\Im r_{f_{2}}\left(-iy^{-1}\right)\right)^2}{\left(\Im r_{f_{1}}\left(-iy^{-1}\right)\right)^2} \sim \frac{y^{1-\beta}l_{1}(y)}{y^{1-\alpha}l_{2}(y)} \underset{y \to \infty}\to 0,
\end{align*}
where $l_{k}(y)$, $k=1,2$ are slowly varying functions. Therefore we get $\Im r_{f_{1}f_{2}}(-iy^{-1}) \sim \Im r_{f_{1}}\left(-iy^{-1}\right)$. Similarly taking the real parts one can show $\Re r_{f_{1}f_{2}}(-iy^{-1}) \sim \Re r_{f_{1}}\left(-iy^{-1}\right)$.

{\bf Subcase (iv)} Let $\mu, \nu \in \mathcal{M}_{1}$ and $1 < \alpha < 2, \beta=2$. The case is similar to the previous one. Using \eqref{needeq1}, we get 
\begin{align*}
\Im r_{f_{1}f_{2}}(z) &= \Im r_{f_{1}}\left(z\right) + \Im r_{f_{2}}\left(z\right) + \Im \left(r_{f_{1}}\left(z\right)r_{f_{2}}\left(z\right)\right).
\end{align*}
So the expression for the imaginary part becomes
\begin{align*}
\Im r_{f_{1}f_{2}}(z) &= \Im r_{f_{1}}\left(z\right) + \Im r_{f_{2}}\left(z\right) + \Im r_{f_{1}}\left(z\right) \Re r_{f_{2}}\left(z\right) + \Re r_{f_{1}}\left(z\right) \Im r_{f_{2}}\left(z\right).
\end{align*}
Using similar type of arguments with the help of ~\eqref{ccor1.5b} and ~\eqref{ccor1.5c}, we get 
$\Im r_{f_{1}f_{2}}(-iy^{-1}) \sim \Im r_{f_{1}}\left(-iy^{-1}\right)$. The real part can be dealt similarly.

{\bf Subcase (v)} Suppose $\mu, \nu \in \mathcal{M}_{p}, p=q \geqslant 2$ and $p < \alpha < \beta \leqslant p+1$. Write 
\begin{align*}
f_{1}\left(z\right) := \frac{1}{m(\mu)B_{\mu}}\left(z\right) &= 1 + c_{1}z + \cdots + c_{p-1}z^{p-1} + z^{p-1}r_{f_{1}}\left(z\right).\\
f_{2}\left(z\right) := \frac{1}{m(\nu)B_{\nu}}\left(z\right) &= 1 + d_{1}z + \cdots + d_{p-1}z^{p-1} + z^{p-1}r_{f_{2}}\left(z\right),
\end{align*}
where $c_{i}$ and $d_{i}$, $1\leqslant i \leqslant p-1$ are some real constants. So,
\begin{align*}
f_{1}\left(z\right)f_{2}\left(z\right) &= 1 + e_{1} z + \cdots + e_{p-1}z^{p-1} + z^{p-1}\left(r_{f_{1}}\left(z\right) + r_{f_{2}}\left(z\right) +O\left(|z|\right)\right),
\end{align*}
where $e_{i}, 1\leqslant i \leqslant p-1$ are real constants. Therefore,
\begin{align*} 
r_{f_{1}f_{2}}(z) = r_{f_{1}}\left(z\right) + r_{f_{2}}\left(z\right) +O\left(|z|\right).
\end{align*}
Taking imaginary and real part on both sides we obtain 
\begin{align}
\Im r_{f_{1}f_{2}}(-iy^{-1}) &= \Im r_{f_{1}}\left(-iy^{-1}\right) + \Im r_{f_{2}}\left(-iy^{-1}\right) + O\left(y^{-1}\right), \label{needeq2} \\
\Re r_{f_{1}f_{2}}(-iy^{-1}) &= \Re r_{f_{1}}\left(-iy^{-1}\right) + \Re r_{f_{2}}\left(-iy^{-1}\right) + O\left(y^{-1}\right). \label{needreal2}
\end{align}
When $p < \alpha < \beta < p+1$, we can use \eqref{ccor1.1b} and \eqref{ccor1.1c} to get  
$$\frac{\Im r_{f_{2}}\left(-iy^{-1}\right)}{\Im r_{f_{1}}\left(-iy^{-1}\right)} \text{ and } \frac{\Re r_{f_{2}}\left(-iy^{-1}\right)}{\Re r_{f_{1}}\left(-iy^{-1}\right)} 
\rightarrow 0 \text{ as }  y\rightarrow \infty.$$ Also, ${y\Im r_{f_{1}}\left(-iy^{-1}\right)}, {y\Re r_{f_{1}}\left(-iy^{-1}\right)} \rightarrow \infty$ as $y\rightarrow \infty$ by ~\eqref{ccor1.1b} and ~\eqref{ccor1.1c} in the respective cases.

When $p < \alpha < p+1, \beta = p+1 $ we have $$\frac{\Im r_{f_{2}}\left(-iy^{-1}\right)}{\Im r_{f_{1}}\left(-iy^{-1}\right)} = -\frac{\Im r_{f_{2}}\left(-iy^{-1}\right)}{y^{-\left(1-r/2\right)}}\frac{1}{cy^{\left(p-\alpha\right)}l\left(y\right)}\frac{1}{y^{1-r/2}},$$
where $c$ is a constant and $l\left(y\right)$ is a slowly varying function and $r$ as in Theorem \ref{ccor1.5}. To make this quantity tend to zero as $y\rightarrow \infty$ using ~\eqref{ccor1.1b} and ~\eqref{ccor1.5b} we need $\left(p-\alpha+1-r/2\right) > 0$. This can be done by a suitable choice of $r \in \left(0,1/2\right)$ since $p+1-\alpha >0$. Exactly same can be done for the real parts also.

{\bf Subcase (vi)} Now suppose $\mu \in \mathcal{M}_{p}, \nu \in \mathcal{M}_{q}, 1 \leqslant p < q$ and $p < \alpha < p+1$ and $q \leqslant \beta \leqslant q+1$. Here we have,
\begin{align}\label{prf1f21}
f_{1}\left(z\right) := \frac{1}{m(\mu)B_{\mu}}\left(z\right) &= 1 + c_{1}z + \cdots + c_{p-1}z^{p-1} + z^{p-1}r_{f_{1}}\left(z\right)
\end{align}
and
\begin{align}\label{prf1f22}
f_{2}\left(z\right) := \frac{1}{m(\nu)B_{\nu}}\left(z\right) = 1 + d_{1}z + \cdots +d_{p-1}z^{p-1}+
 \cdots + d_{q-1}z^{q-1} + z^{q-1}r_{f_{2}}\left(z\right),
\end{align}
where $c_{i}, 1\leqslant i \leqslant p-1$ and $d_{j}, 1\leqslant j \leqslant q-1$ are some real constants. 
It is easy to see using $p < q$ we have
\begin{equation}\label{case4eqnneed}
 r_{f_{1}f_{2}}(z) = r_{f_{1}}\left(z\right) + O\left(|z|\right).
\end{equation} 
Observe that the asymptotics follow since ${y\Im r_{f_{1}}\left(-iy^{-1}\right)}, {y\Re r_{f_{1}}\left(-iy^{-1}\right)} \rightarrow \infty$ as $y\rightarrow \infty$ (using ~\eqref{ccor1.1b} and \eqref{ccor1.1c} respectively).
\vskip10pt

{\bf Case \eqref{theoremob}} The second part of the theorem deals with the case $p \geqslant 1$ and $\alpha = p$. We split again the proof into several subcases.

{\bf Subcase (i)} $1 \leqslant p=\alpha < \beta < p+1$ or $ \alpha =p$,  $p <q \leqslant \beta \leqslant q+1$. In this case we can define $f_1$ and $f_2$ as in \eqref{prf1f21} and \eqref{prf1f22} respectively and one can get the imaginary part asymptotics using the similar calculations as in the proof of  \eqref{theoremoa}. We only need to show  $\Re r_{f_{1}f_{2}}(-iy^{-1}) \gg y^{-1}$ in the above cases. But it is obvious since $p,q \geqslant 1$ and $\Im r_{f_{1}\left(-iy^{-1}\right)}, \Re r_{f_{1}\left(-iy^{-1}\right)} \gg y^{-1}$.

{\bf Subcase (ii)} Suppose $p=q=1$, $\alpha = p$ and $\beta = p+1$.  Then we have form \eqref{needeq1}
\begin{align}
 \Im r_{f_{1}f_{2}}(-iy^{-1}) &= \Im r_{f_{1}}\left(-iy^{-1}\right) + \Im r_{f_{2}}\left(-iy^{-1}\right) +\Im r_{f_{1}}\left(-iy^{-1}\right) \Re r_{f_{2}}\left(-iy^{-1}\right) \nonumber \\ & + \Re r_{f_{1}}\left(-iy^{-1}\right) \Im r_{f_{2}}\left(-iy^{-1}\right) \nonumber\\
 \Re r_{f_{1}f_{2}}(-iy^{-1}) &= \Re r_{f_{1}}\left(-iy^{-1}\right) + O( |r_{f_{2}}\left(-iy^{-1}\right)|)  \label{realparteq1}.
\end{align}
Now using \eqref{ccor1.5c}, \eqref{ccor1.2b}, we get 
\begin{align} \label{ekjaygay}
\frac{\Im r_{f_{2}}\left(-iy^{-1}\right)}{\Im r_{f_{1}}\left(-iy^{-1}\right) } = \frac{\Im r_{f_{2}}\left(-iy^{-1}\right) }{y^{-(1-r/2)}}\frac{1}{y^{1-r/2}}\frac{1}{l(y)} \to 0 \text{  as  } y \to \infty,
\end{align}
also observe $$\frac{\Im r_{f_{1}}\left(-iy^{-1}\right) \Re r_{f_{2}}\left(-iy^{-1}\right)}{\Im r_{f_{1}}\left(-iy^{-1}\right)} = \Re r_{f_{2}}\left(-iy^{-1}\right) \to 0 \text{  as  } y \to \infty.$$ 
Therefore $\Im r_{f_{1}f_{2}}(-iy^{-1}) \sim \Im r_{f_{1}}\left(-iy^{-1}\right)$.  We now consider the equation \eqref{realparteq1}. Observe that from \eqref{ccor1.2c} and \eqref{ccor1.5a} $$\Re r_{f_{1}}\left(-iy^{-1}\right)  \gg y^{-1} \text{ and } y\left|r_{f_{2}}\left(-iy^{-1}\right)\right| = \left|\frac{r_{f_{2}}\left(-iy^{-1}\right)}{-iy^{-1}}\right| \gg 1.$$ These show that $\Re r_{f_{1}f_{2}}(-iy^{-1})  \gg y^{-1}$.

{\bf Subcase (iii)} Now let $p = q \geqslant 2$, $\alpha =p$ and $\beta = p+1$.  For imaginary part again using  the equations \eqref{needeq2}, \eqref{ccor1.2b} and calculations as in \eqref{ekjaygay} we have $\Im r_{f_{1}f_{2}}(-iy^{-1}) \sim \Im r_{f_{1}}\left(-iy^{-1}\right)$. The real part asymptotics follow from  \eqref{ccor1.2c}, \eqref{ccor1.5b}.

{\bf Case \eqref{theoremoc}} \label{casee3} Let $\mu \in \mathcal{M}_{0}, \nu \in \mathcal{M}_{q}, 1\leqslant q$ and $\alpha = 1$ and $q \leqslant \beta \leqslant q+1$ with $\alpha \neq \beta$. In this case we have from~\eqref{rf1f2},
$\Im r_{f_{1}f_{2}}(-iy^{-1}) = \Im f_{1}\left(-iy^{-1}\right) + O\left(y^{-1}|f_{1}\left(-iy^{-1}\right)|\right)$.

Now
\begin{align*}
y^{-1}\frac{|f_{1}\left(-iy^{-1}\right)|}{\Im f_{1}\left(-iy^{-1}\right)} &= \left(\frac{\left(\Re f_{1}\left(-iy^{-1}\right)\right)^2+\left(\Im f_{1}\left(-iy^{-1}\right)\right)^2}{y^2\left(\Im f_{1}\left(-iy^{-1}\right)\right)^2}\right)^{1/2}\\ &= \left(\frac{1}{y^2} + \frac{1}{y^2}\left(\frac{\Re f_{1}\left(-iy^{-1}\right)}{\Im f_{1}\left(-iy^{-1}\right)}\right)^2\right)^{1/2} \rightarrow 0 \text{  as } y\rightarrow \infty  
\end{align*}
because
\begin{align*}
y^{-1}\frac{\Re f_{1}\left(-iy^{-1}\right)}{\Im f_{1}\left(-iy^{-1}\right)} &= y^{-1}\frac{\Re f_{1}\left(-iy^{-1}\right)}{y^{r/2}}\frac{y^{r/2}}{\Im f_{1}\left(-iy^{-1}\right)} \\
&= \frac{\Re f_{1}\left(-iy^{-1}\right)}{y^{r/2}}\frac{1}{y^{1-r/2}\Im f_{1}\left(-iy^{-1}\right)} \rightarrow 0 \text{ as } y \rightarrow \infty
\end{align*}
using ~\eqref{ccor1.4b} and ~\eqref{ccor1.4c}. Therefore
 $\Im r_{f_{1}f_{2}}(-iy^{-1}) \sim \Im f_{1}\left(-iy^{-1}\right)$.

Again from \eqref{rf1f2},
$\Re r_{f_{1}f_{2}}(-iy^{-1}) = \Re f_{1}\left(-iy^{-1}\right) + O\left(y^{-1}|f_{1}\left(-iy^{-1}\right)|\right)$. Now if we are able to show that  $\Re r_{f_{1}f_{2}}(-iy^{-1}) \sim \Re f_{1}\left(-iy^{-1}\right)$ then we are done. For that we proceed exactly as in the case of imaginary part of this case. It is enough to show that $ y^{-1}\frac{\Im f_{1}\left(-iy^{-1}\right)}{\Re f_{1}\left(-iy^{-1}\right)} \to 0$ as $y \to \infty$ to get the required result. Note in this case  $\Re f_{1}\left(-iy^{-1}\right) \to \infty$ and using \eqref{ccor1.4b} and \eqref{ccor1.4c}, we get 
\begin{align*}
y^{-1}\frac{\Im f_{1}\left(-iy^{-1}\right)}{\Re f_{1}\left(-iy^{-1}\right)} &= y^{-1+r/2}\frac{\Im f_{1}\left(-iy^{-1}\right)}{y^{r/2}}\frac{1}{\Re f_{1}\left(-iy^{-1}\right)} 
\rightarrow 0 \text{ as } y \rightarrow \infty.
\end{align*}

{\bf Case \eqref{theoremod} }\label{casee4} Suppose $\mu \in \mathcal{M}_{p}, \nu \in \mathcal{M}_{q}, 1 \leqslant p < q$ and $\alpha = p+1$ and $q \leqslant \beta \leqslant q+1$ with $\alpha \neq \beta$. In this case we have $\Im r_{f_{1}f_{2}}(-iy^{-1}) = \Im r_{f_{1}}\left(-iy^{-1}\right)  + O\left(y^{-1}\right)$ from \eqref{case4eqnneed}. $y \Im r_{f_{1}}\left(-iy^{-1}\right) \to \infty$ as $y \to \infty$ by \eqref{ccor1.5c}.  Therefore $\Im r_{f_{1}f_{2}}(-iy^{-1}) \sim \Im r_{f_{1}}\left(-iy^{-1}\right) $.

For the real part calculations we recall \eqref{prf1f21}, \eqref{prf1f22} and write the remainder term of $f_{1}f_{2}(z)$ in the following way:
\begin{align*}
r_{f_{1}f_{2}}(z) = r_{f_{1}}\left(z\right) + d z + zr_{f_{1}}\left(z\right)+ zr_{f_{2}}\left(z\right) + O\left(|z^2|\right).
\end{align*}
Where $d$ is some real constant. We note that the term $zr_{f_{2}}\left(z\right)$ may or may not occur in the above expression depending on the value of $q-p=1$ or $>1$. Therefore
\begin{align*}
\Re r_{f_{1}f_{2}}(-iy^{-1}) &= \Re r_{f_{1}}\left(-iy^{-1}\right) - \frac{\Im r_{f_{1}}\left(-iy^{-1}\right) + \Im r_{f_{2}}\left(-iy^{-1}\right)}{y} + O\left(y^{-2}\right) .
\end{align*}

Using \eqref{ccor1.5a}, ${y^2\Re r_{f_{1}}\left(-iy^{-1}\right)} \to \infty$ as $ y\rightarrow \infty$. For the term in the middle after dividing by $\Re r_{f_{1}}(-iy^{-1})$ we observe the following:

When $q \leqslant \beta < q+1$ the numerator is regularly varying with tail index between $(-1,0]$ while the denominator is slowly varying and this allows us to conclude that the term $$\frac{\Im r_{f_{1}}\left(-iy^{-1}\right) + \Im r_{f_{2}}\left(-iy^{-1}\right)}{y\Re r_{f_{1}}\left(-iy^{-1}\right)} \to 0 \text{ as } y \to \infty.$$ Also when $\beta = q+1$ we can write 
$$\frac{\Im r_{f_{1}}\left(-iy^{-1}\right) + \Im r_{f_{2}}\left(-iy^{-1}\right)}{y\Re r_{f_{1}}\left(-iy^{-1}\right)} = \frac{1}{y^{1-r}}\frac{\Im r_{f_{1}}\left(-iy^{-1}\right) + \Im r_{f_{2}}\left(-iy^{-1}\right)}{y^{-(1-r/2)}}\frac{y^{-(1+r/2)}}{\Re r_{f_{1}}\left(-iy^{-1}\right)} \to 0 \text{ as } y \to \infty$$using \eqref{ccor1.5b} and \eqref{ccor1.5c}. Thus $\Re r_{f_{1}f_{2}}(-iy^{-1}) \sim \Re r_{f_{1}}\left(-iy^{-1}\right)$.
\vskip10pt

{\bf Proof of Theorem~\ref{Bmuz} \eqref{theoremoe}} In this case we assume $\alpha = \beta$, $\mu, \nu \in \mathcal{M}_p$ and $\nu(x,\infty) \sim c\mu(x,\infty)$ for some $c \in (0,\infty)$. The methods are similar to the previous one, so we will only briefly sketch the proofs.

{\bf Case (i) } Suppose $p \geqslant 1$ and $p < \alpha = \beta < p+1$.

{\bf Subcase (i)} First suppose $p=1$. Then from \eqref{needeq1}, we have
\begin{align*}
 r_{f_{1}f_{2}}(-iy^{-1}) &= r_{f_{1}}\left(-iy^{-1}\right) + r_{f_{2}}\left(-iy^{-1}\right) +o\left(|r_{f_{1}}\left(-iy^{-1}\right)|\right).
\end{align*}
Taking imaginary parts on both sides we get
\begin{align*}
 \Im r_{f_{1}f_{2}}(-iy^{-1}) &= \Im r_{f_{1}}\left(-iy^{-1}\right) + \Im r_{f_{2}}\left(-iy^{-1}\right) +o\left(|r_{f_{1}}\left(-iy^{-1}\right)|\right).
\end{align*}
Using \eqref{ccor1.1b}, \eqref{ccor1.1c} and the tail equivalence condition we derive
$$\frac{\Im r_{f_{2}}\left(-iy^{-1}\right)}{\Im r_{f_{1}}\left(-iy^{-1}\right)} \to c\text{ and } \left(\frac{|r_{f_{1}}\left(-iy^{-1}\right)|}{\Im r_{f_{1}}\left(-iy^{-1}\right)}\right) \to (1+c^2)^{1/2}\text{  as  } y \to \infty.$$
Therefore we have $\Im r_{f_{1}f_{2}}(-iy^{-1}) \sim (1+c)\Im r_{f_{1}}\left(-iy^{-1}\right)$. Exactly same calculations taking the real part into consideration gives $\Re r_{f_{1}f_{2}}(-iy^{-1}) \sim (1+c)\Re r_{f_{1}}\left(-iy^{-1}\right).$

{\bf Subcase (ii)} Suppose $p \geqslant 2$. Then we have the equation \eqref{needeq2}, given by
\begin{align*} 
\Im r_{f_{1}f_{2}}(-iy^{-1}) = \Im r_{f_{1}}\left(-iy^{-1}\right) + \Im r_{f_{2}}\left(-iy^{-1}\right) + O\left(y^{-1}\right).
\end{align*}
Here also using ~\eqref{ccor1.1b} we get as $y \to \infty$
$$\frac{\Im r_{f_{2}}\left(-iy^{-1}\right)}{\Im r_{f_{1}}\left(-iy^{-1}\right)} \to c \text{  
and }  y\Im r_{f_{1}}\left(-iy^{-1}\right) \to \infty.$$
Thus $\Im r_{f_{1}f_{2}}(-iy^{-1}) \sim (1+c)\Im r_{f_{1}}\left(-iy^{-1}\right)$
and exactly same calculation with real parts give  $\Re r_{f_{1}f_{2}}(-iy^{-1}) \sim (1+c)\Re r_{f_{1}}\left(-iy^{-1}\right)$.

{\bf Case (ii)} When $\alpha = p$, similar calculations like above provides $\Im r_{f_{1}f_{2}}(-iy^{-1}) \sim (1+c)\Im r_{f_{1}}\left(-iy^{-1}\right)$ since we have the same imaginary part asymptotics in this case also. The calculations done in the proof of \eqref{theoremob} assures us $\Re r_{f_{1}f_{2}}(-iy^{-1}) \gg y^{-1} $.

{\bf Case (iii)}. Here we suppose  $p \geqslant 1$, $\alpha = \beta = p+1$.

{\bf Subcase (i)} First consider $p=1$. From \eqref{needeq1}, we have
\begin{align*}
 \Re r_{f_{1}f_{2}}(-iy^{-1}) &= \Re r_{f_{1}}\left(-iy^{-1}\right) + \Re r_{f_{2}}\left(-iy^{-1}\right) +\Re (r_{f_{1}}\left(-iy^{-1}\right)r_{f_{2}}\left(-iy^{-1}\right)).
\end{align*}
Now from \eqref{ccor1.5b}, $$\frac{\Re r_{f_{2}}\left(-iy^{-1}\right)}{ \Re r_{f_{1}}\left(-iy^{-1}\right)} \to c \text{  as  } y \to \infty.$$
Also $$\frac{\Re r_{f_{1}}\left(-iy^{-1}\right)\Re r_{f_{2}}\left(-iy^{-1}\right)}{\Re r_{f_{1}}\left(-iy^{-1}\right)} = \Re r_{f_{2}}\left(-iy^{-1}\right) \to 0 \text{  as  } y \to \infty$$
and using \eqref{ccor1.5b} and \eqref{ccor1.5c} we observe that
\begin{align*}
\frac{\Im r_{f_{1}}\left(-iy^{-1}\right)\Im r_{f_{2}}\left(-iy^{-1}\right)}{\Re r_{f_{1}}\left(-iy^{-1}\right)} &=  \frac{\Im r_{f_{1}}\left(-iy^{-1}\right)}{y^{-(1-r/2)}}\frac{\Im r_{f_{2}}\left(-iy^{-1}\right)}{y^{-(1-r/2)}}\frac{y^{-(1+r/2)}}{\Re r_{f_{1}}\left(-iy^{-1}\right)}\frac{1}{y^{1-3r/2}} \to 0 \text{  as  }y \to \infty.
\end{align*}
Thus $ \Re r_{f_{1}f_{2}}(-iy^{-1}) \sim (1+c)\Re r_{f_{1}}\left(-iy^{-1}\right)$. Exactly same calculation taking the imaginary part gives us $ \Im r_{f_{1}f_{2}}(-iy^{-1}) \sim (1+c)\Im r_{f_{1}}\left(-iy^{-1}\right)$.
 Therefore we are done when $p=1$.

{\bf Subcase (ii)} Suppose $p \geqslant 2$. The real part can be dealt as in the proof of case \eqref{theoremod}.
For the imaginary part note that from \eqref{needeq2} we have  
 \begin{align*} 
\Im r_{f_{1}f_{2}}(-iy^{-1}) = \Im r_{f_{1}}\left(-iy^{-1}\right) + \Im r_{f_{2}}\left(-iy^{-1}\right) + O\left(y^{-1}\right).
\end{align*}
Now as $y \to \infty$, $y^{-(1-r/2)}\gg \Im r_{f_{1}}\left(-iy^{-1}\right)$, $\Im r_{f_{2}}\left(-iy^{-1}\right) \gg y^{-1}$ by \eqref{ccor1.5c}. Therefore we have $\Im r_{f_{1}f_{2}}(-iy^{-1}) \gg y^{-1}$. Noting the fact that $y^{-1} \ll y^{-(1-r/2)}$ finally we get $\Im r_{f_{1}f_{2}}(-iy^{-1}) \ll y^{-(1-r/2)}$, $r \in (0,1/2)$. \qed

\subsection*{Acknowledgements}
The first author's research is supported by fellowship from Indian Statistical Institute and the second author's research was supported by Cumulative Professional Development Allowance from Ministry of Human Resource Development, Government of India and Department of Science and Technology, Inspire funds. We also thank Octavio Arizmendi and the anonymous referee for helpful feedback.

\bibliographystyle{abbrvnat}
\bibliography{practicebib}
\end{document}